\title[A classification of complex rank $3$ vector bundles on $\CP^5$]{A classification of complex rank $3$ vector bundles on $\CP^5$}
\author{Morgan Opie}
\begin{document}
  \maketitle
\begin{abstract}
Given integers $a_1,a_2,a_3$, there is a complex rank $3$ topological bundle on $\CP^5$ with $i$-th Chern class equal to $a_i$ if and only if $a_1,a_2,a_3$ satisfy the Schwarzenberger condition. Provided that the Schwarzenberger condition is satisfied, we prove that the number of isomorphism classes of rank $3$ bundles $V$ on $\CP^5$ with $c_i(V)=a_i$ is equal to $3$ if $a_1$ and $a_2$ are both divisible by $3$ and equal to $1$ otherwise. 

This shows that Chern classes are incomplete invariants of topological rank $3$ bundles on $\CP^5$. To address this problem, we produce a universal class in the $\tmf$-cohomology of a Thom spectrum related to $BU{\hspace{-.6pt}}(3)$, where $\tmf$ denotes topological modular forms localized at $3$. From this class and orientation data, we construct a $\Z/3$-valued invariant of the bundles of interest and prove that our invariant separates distinct bundles with the same Chern classes.
\end{abstract}
\setcounter{tocdepth}{2}
\tableofcontents

\section{Introduction}\label{sec:intro}

Let $X$ be a finite-dimensional CW-complex. From the perspective of homotopy theory, a topological vector bundle of complex rank $r$ over a space $X$ is identified with a classifying map $X \to BU{\hspace{-.6pt}}(r)$. Topologically equivalent vector bundles over $X$ correspond to homotopy equivalent maps to $BU{\hspace{-.6pt}}(r)$.

The integral cohomology of $BU{\hspace{-.6pt}}(r)$ is generated by universal Chern classes $c_1,\ldots c_r,$ with $c_i \in H^{2i}(BU{\hspace{-.6pt}}(r);\mathbb Z).$ These give rise to important invariants of complex bundles: the Chern classes of a bundle, defined for $V\: X \to BU{\hspace{-.6pt}}(r)$ as the pullbacks $$c_i(V):=V^*(c_i)\in H^{2i}(X;\mathbb Z).$$
In the case $X = \CP^n$, Chern classes are complete invariants of the {\em stable} equivalence class of the bundle. Explicitly, this means that $V\: \CP^n \to BU{\hspace{-.6pt}}(r)$ and $W\: \CP^n \to BU{\hspace{-.6pt}}(r')$ have the same Chern classes if and only if there exist integers $n,m$ greater than zero such that the bundles $V \oplus \underline \C^n$ and $W \oplus \underline \C^m$ are topologically equivalent. Here, $\underline \C$ is the trivial rank $1$ bundle on $X$.

This leads to the following fundamental question:
\begin{q}\label{q1} Are Chern classes sufficient to determine the (unstable) topological class of a complex rank $r$ vector bundle on $\CP^n$, up to topological equivalence? If not, what invariants beyond Chern classes are needed to distinguish such bundles?
\end{q} 

Rank $1$ bundles on all spaces $\CP^n$ are determined by their first Chern class \cite{AE,KS}. Rank $\geq n$ bundles on $\CP^n$ are also determined by their Chern classes. For $r$ strictly between $1$ and $n$, there is no uniform answer (although some patterns have been found when restricting to bundles with all Chern classes zero, see \cite{Hu}).

In \cite{AR}, Atiyah and Rees answer Question~\ref{q1} for complex rank $2$ topological bundles on $\CP^3$ by producing a $\Z/2$-valued invariant $\alpha$, which can be viewed as a characteristic class in the generalized cohomology of a classifying space.
\begin{thm}[{\cite[Theorem 2.8 and 3.3]{AR}}]\label{thm:AR} Given $a_1,a_2\in \Z$ with $a_1a_2\equiv 0 \pmod 2$, the number of isomorphism classes of rank $2$ bundles on $\CP^3$ with $i$-th Chern class $a_i$ is:
\begin{itemize}
\item equal to $2$ if $a_1\equiv 0 \pmod 2$; and
\item equal to $1$ otherwise.
\end{itemize}
In the first case, a rank $2$ vector bundle on $\CP^3$ is determined by $c_1,c_2,$ and $\alpha$. \end{thm}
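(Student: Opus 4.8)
The plan is to convert the enumeration into a lifting problem and then isolate the single place where $2$-primary phenomena decide between one and two bundles. Since $\CP^3$ is $6$-dimensional, the fact recalled above that rank $\ge n$ bundles on $\CP^n$ are determined by their Chern classes (here $n=3$) says that rank $3$ bundles on $\CP^3$ are classified by their Chern data, and that every triple of Chern classes is realized by some stable, hence rank $3$, bundle. Thus a rank $2$ bundle with $c_1=a_1,\ c_2=a_2$ is the same datum as a lift of the rank $3$ bundle $g\colon \CP^3\to BU(3)$ with Chern data $(a_1,a_2,0)$ along the fibration $S^5=U(3)/U(2)\to BU(2)\xrightarrow{p} BU(3)$. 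First I would run obstruction theory for this lifting problem. Because $S^5$ is $4$-connected, a lift over the $4$-skeleton $\CP^2$ exists and is unique; the primary obstruction to extending over the top cell lies in $H^6(\CP^3;\pi_5 S^5)=\Z$ and equals $g^*c_3=c_3=0$, so a lift always exists. The set of lifts up to vertical homotopy is then a torsor under $H^6(\CP^3;\pi_6 S^5)=\Z/2$, using $\pi_6 S^5=\Z/2$ and $H^5(\CP^3)=0$ to kill the lower difference groups. This already yields that there are at least one and at most two bundles with prescribed $(a_1,a_2)$.

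The heart of the matter is deciding when the two vertical lifts are genuinely non-isomorphic, which is not detected by the obstruction count alone and demands a bona fide invariant. When $a_1\equiv 0\pmod 2$ I would tensor by $L^{-a_1/2}$, where $L$ is the hyperplane bundle, to reduce the structure group to $SU(2)$; this normalizes $c_1$ to $0$ and identifies the bundles in question with $[\CP^3,BSU(2)]$. Through dimension $6$ one has $BSU(2)\simeq S^4$, so this classifying set is $[\CP^3,S^4]$, and the extra $\Z/2$ is carried by $\pi_6(S^4)=\Z/2\langle \eta^2\rangle$. I would extract a $\Z/2$-valued invariant $\alpha$ from the $\pi_6(S^4)$-component of the classifying map on the top cell — equivalently a secondary operation of Adams type detecting $\eta^2$ — and prove that it realizes both values, so that the two lifts are non-isomorphic and the count is exactly $2$.

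For $a_1$ odd (so, under the hypothesis $a_1a_2\equiv 0\pmod2$, with $a_2$ even) no reduction to $SU(2)$ is available, and I expect the two vertical lifts to become homotopic as maps to $BU(2)$, giving a single bundle. To see this I would analyze the action of the self-homotopies of $g$ on the fiber of $p_*$, equivalently the connecting map $[\CP^3,U(3)]\to[\CP^3,S^5]$ in the long exact sequence of the fibration, and show that the nonzero element of $H^6(\CP^3;\pi_6 S^5)$ lies in its image exactly when $c_1$ is odd. The mechanism is an $\eta$-multiplication (a Whitehead-product, or $Sq^2$, term) whose coefficient is $c_1\bmod 2$: the attaching map of the top cell of $\CP^3$ collapses to $\eta\colon S^5\to S^4$, and the relevant secondary operation pairs this $\eta$ against $c_1$, so the difference class is annihilated precisely when $a_1$ is odd.

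The main obstacle is exactly this last passage: the obstruction-theoretic counting gives only ``at most two,'' and all the content lies in controlling unstable $2$-primary data. Concretely, the hard parts are (i) constructing $\alpha$ as an honest invariant and proving it separates the two bundles when $a_1$ is even — which needs a genuine geometric or $K$-theoretic realization of the $\eta^2$-class, not merely a formal difference class — and (ii) verifying that the $c_1$-odd Whitehead-product term really does trivialize the distinction, which requires pinning down the relevant secondary operation together with its dependence on $c_1\bmod 2$. Everything else — the stable-range identification, the existence of lifts, and the bound of two — is routine obstruction theory by comparison.
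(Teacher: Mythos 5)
First, a point of reference: the paper does not prove Theorem~\ref{thm:AR} at all --- it is quoted from Atiyah--Rees, and the only ingredient the paper revisits is the construction of $\alpha$ as a class in $KO^4(BSU(2))$ paired with the $KO$-pushforward. So your attempt can only be measured against \cite{AR} and against the paper's parallel argument for rank $3$ bundles on $\CP^5$ (Theorem~\ref{classification} and Section~\ref{sec:count}). At that level your skeleton is the right one and genuinely parallel: you trade the paper's Postnikov tower of $BU(3)$ for the fibration $S^5\to BU(2)\to BU(3)$, Lemma~\ref{lem:torsor} for the difference group $H^6(\CP^3;\pi_6S^5)\simeq \Z/2$, and the $\tmf$-class $\tilde\rho$ for the $KO$-theoretic $\alpha$. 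The reduction to a lifting problem, the vanishing of the primary obstruction ($c_3=0$), and the bound of two are all fine.

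There are, however, two genuine problems. The first is that the two decisive steps are named but not carried out, and they are precisely the content of the cited theorem: that $\alpha$ is an honest invariant of the $U(2)$-bundle (not of a chosen splitting of the top cell) which takes both values when $c_1$ is even, and that the $\Z/2$ worth of lifts collapses when $c_1$ is odd. What you actually establish is only ``between one and two.'' The second is a concrete error in your proposed mechanism for the odd case: the attaching map of the top cell of $\CP^3$ composed with the collapse $\CP^2\to\CP^2/\CP^1=S^4$ is \emph{null}, not $\eta$. Indeed $\mathrm{Sq}^2(x^2)=0$ in $\HF{2}{*}(\CP^3)$ (the total square of $x$ is $x+x^2$, so that of $x^2$ is $x^2+x^4$), hence $\CP^3/\CP^1\simeq S^4\vee S^6$ and there is no $\eta$ to ``pair against $c_1$.'' The term that actually carries the $c_1$-dependence is the decomposable part of the relevant $k$-invariant: the Wu-type relation $\mathrm{Sq}^2c_2=c_1c_2$ contributes a summand $\iota_2\cdot\iota_5'$ to the analogue of the class $m^*U$ of Claim~\ref{claim:m_upper_U}, which evaluates to $a_1\cdot(\text{fiber class})$ on $S^1\times\CP^3$ --- exactly as the terms $\iota_2\iota_9'$ and $\iota_2^2\iota_7'-\iota_4\iota_7'$ produce the coefficient $ya_1-xa_1^2+xa_2$ in the paper's rank-$3$ computation; the $\mathrm{Sq}^2$-of-the-fiber-class term that you lean on vanishes for the same reason $\CP^3/\CP^1$ splits. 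A smaller slip: not every triple of integers occurs as Chern classes of a rank $3$ bundle on $\CP^3$ (the Schwarzenberger condition $S_3$ intervenes), though $(a_1,a_2,0)$ with $a_1a_2$ even does satisfy it, so your lifting problem is still well posed under the stated hypothesis.
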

\begin{rmk} The condition $a_1a_2\equiv 0 \pmod 2$ is necessary and sufficient for two integers to be the Chern classes of a rank $2$ bundle on $\CP^3$.\end{rmk}

Rank $2$ bundles on $\CP^4$ are determined by their Chern classes \cite[Theorem 1]{Switzer2}, as are $3$ bundles on $\CP^4$. The latter was already known to experts but is a consequence of results in this paper (see Corollary~\ref{cor:rk3p4_final}). Rank $2$ bundles on $\CP^5$ are not determined by their Chern classes, and are enumerated by Switzer \cite[Theorem 4]{Switzer2}. Given that the number rank $2$ bundles on $\CP^5$ with fixed Chern classes is known, one might hope for an invariant that distinguishes non-isomorphic bundles with the same stable class. We provide some partial results about this in Subsection~\ref{subsec:rank_2}. 

We focus on rank $3$ bundles on $\CP^5$ because this case is structurally analogous to the case of rank $2$ bundles on $\CP^3$.
Atiyah and Rees' work shows that the classification of rank $2$ bundles on $\CP^3$ is a $2$-primary problem.  There are similarities between the $2$-primary homotopical structure of $BU{\hspace{-.6pt}}(2)$ and the $3$-primary homotopical structure of $BU{\hspace{-.6pt}}(3) $, which we show result in similarities between the classification of rank $2$ bundles on $\CP^3$ and of rank $3$ bundles on $\CP^5$. We will elaborate on this analogy in order to answer Question~\ref{q1} for rank $3$ bundles on $\CP^5$. 

We answer Question~\ref{q1} by defining a $\Z/3$-valued invariant $\rho$ of such bundles and proving the following:

\begin{thm}\label{thm:combined} Given $a_1,a_2,a_3 \in \mathbb Z$ satisfying the Schwarzenberger condition $S_5$ (see Lemma~\ref{lem:explicit_S5}), the number of  isomorphism classes of rank $3$ bundles on $\CP^5$ with $i$-th Chern class equal to $a_i$ is:
\begin{itemize}
\item equal to 3 if $a_1\equiv 0 \pmod 3$ and $a_2\equiv 0 \pmod 3$; and
\item equal to 1 otherwise.
\end{itemize}
In the first case, a rank $3$ bundle on $\CP^5$ is determined by $c_1,c_2,c_3$ and $\rho$.
\end{thm}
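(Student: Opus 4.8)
The plan is to identify the set of rank $3$ bundles with fixed Chern classes with a fiber of the Chern-class map on $[\CP^5, BU(3)]$, to reduce its computation to a $3$-local problem, and then to use the invariant $\rho$ to pin down the size of the exceptional fibers. Since a rank $3$ bundle is exactly a point of $[\CP^5, BU(3)]$, I would first observe that the stable class — equivalently, by the completeness of Chern classes for stable equivalence on $\CP^n$ recalled above, the Chern classes themselves — is recorded by the composite $[\CP^5, BU(3)] \to [\CP^5, BU] \cong \widetilde K^0(\CP^5)$. Thus the count of bundles with prescribed $c_1,c_2,c_3$ is the cardinality of the fiber of this composite, which is nonempty precisely when the Schwarzenberger condition $S_5$ of Lemma~\ref{lem:explicit_S5} holds.

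Next I would factor the stabilization through the tower $BU(3) \to BU(4) \to BU(5)$, whose successive homotopy fibers are $S^7$ and $S^9$, noting that $BU(5) \to BU$ induces a bijection on maps out of the $10$-dimensional complex $\CP^5$ because the remaining fibers $S^{11}, S^{13}, \dots$ are at least $10$-connected. The associated exact sequences of pointed sets reduce the fiber computation to the contributions of $[\CP^5, S^7]$ and $[\CP^5, S^9]$, together with the actions of $[\CP^5, U(4)]$ and $[\CP^5, U(5)]$. Working $3$-locally loses no information, since the classification away from $3$ is governed entirely by Chern classes. The $S^9$ stage then drops out, as $\pi_{10}(S^9)_{(3)} = 0$, while the $S^7$ stage contributes a $\Z/3$ detected on the top cell, coming from $\pi_{10}(S^7)_{(3)} \cong \Z/3$, generated by the composite of the pinch $\CP^5 \to S^{10}$ with the $3$-primary element $\alpha_1 \in \pi_3^s$. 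This is the source of the number $3$: over a liftable class the set of lifts is a quotient of a $\Z/3$-torsor, so there are \emph{at most} three bundles with given Chern classes.

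It then remains to determine, as a function of the Chern data, the orbits of the $[\CP^5, U(4)]_{(3)}$-action on this $\Z/3$. I would show by a direct obstruction-theoretic comparison that the action is transitive — collapsing the fiber to a single point — precisely when $c_1 \not\equiv 0$ or $c_2 \not\equiv 0 \pmod{3}$, and is trivial when $c_1 \equiv c_2 \equiv 0 \pmod{3}$, the relevant part of the action being detected by $c_1$ and $c_2$ modulo $3$. This matches the dichotomy in the statement and settles the generic case, where Chern classes are then complete invariants.

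The remaining and hardest step is to prove that in the exceptional case the fiber genuinely has three elements rather than collapsing, i.e.\ that the three candidate bundles are pairwise inequivalent. Since they share all Chern classes and the same image in $\widetilde K^0(\CP^5)$, no classical characteristic class can separate them: the difference is an $\alpha_1$-multiple, invisible to ordinary cohomology and to complex $K$-theory. Here I would bring in $\tmf$: using the universal class in the $\tmf$-cohomology of the Thom spectrum associated to $BU(3)$ together with a choice of orientation, I would define $\rho$ and compute its values on the three candidates. Because $\tmf$ detects $\alpha_1$ (its Hurewicz image is nonzero in $\pi_3\tmf$), $\rho$ should take three distinct values, proving the bundles are distinct and that $(c_1,c_2,c_3,\rho)$ is a complete invariant. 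I expect the main obstacle to be exactly this detection argument — showing $\rho$ is well defined independently of the auxiliary orientation data, up to the claimed $\Z/3$ of indeterminacy, and verifying that it realizes the $\alpha_1$-difference on the nose rather than vanishing for formal reasons.
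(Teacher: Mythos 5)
Your architecture matches the paper's at the top level (enumerate the bundles and show the set with fixed Chern data is a quotient of a $\Z/3$-torsor; then separate the three classes by a $\tmf$-valued invariant built from a universal class on a Thom spectrum plus orientation data), but your enumeration runs along a genuinely different tower: you lift through $BU(3)\to BU(4)\to BU(5)$ with fibers $S^7, S^9$ and study the action of $[\CP^5,U(4)]$ on $[\CP^5,S^7]_{(3)}\simeq\Z/3$, whereas the paper builds a Postnikov-type tower for $\c{BU(3)}{3}$ out of $K(\Z,2)\times K(\Z,4)\times K(\Z,6)$ and computes the image of composition with the $k$-invariant $U\:P_9\to K(\Z/3,11)$ under the principal action. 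Your route is viable and arguably more geometric, but the decisive step --- showing the connecting action is surjective onto $\Z/3$ exactly when $a_1\not\equiv 0$ or $a_2\not\equiv 0\pmod 3$ --- is asserted rather than derived; in the paper this is where the explicit formula $ya_1-xa_1^2+xa_2$ comes from (Claims~\ref{claim:transitive_KZ379} and \ref{claim:m_upper_U}), and some equivalent computation with the attaching data of the $8$-cell of $BU(4)$ would be unavoidable in your setup.

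The genuine gap is in your detection argument. You justify that $\rho$ separates the three candidates by saying that $\tmf$ detects $\alpha_1$ in $\pi_3\tmf$; this is not the operative mechanism and, taken literally, fails. The generator of $\pi_{10}BU(3)$ is the composite $S^{10}\xrightarrow{\alpha_1}S^7\xrightarrow{\epsilon}S^4\to BU(3)$ with $\susp\epsilon=\alpha_1$, so after one suspension the difference between two bundles in the same fiber is a multiple of $\alpha_1^2=0$: it is invisible to \emph{every} cohomology theory evaluated on $\suspp BU(3)$, no matter how well that theory sees $\alpha_1$. The entire point of passing to $\Th{BU(3)_{\coz}}{-\gamma_3}$ is that the Thom spectrum of $-\gamma_3$ over the $4$-skeleton is $C(\alpha_1)$ rather than $\sphere^0\oplus\sphere^4$, so the Thomified generator is detected by the Toda bracket $\langle\alpha_1,\alpha_1,\alpha_1\rangle=\beta_1$ and ultimately by $\alpha_1\beta_1\neq 0$ in $\pi_{13}\tmf$; one must also verify that $C(\alpha_1)\otimes\tmf$ actually splits off the (skeleton of the) Thom spectrum so that this bracket survives to a universal class. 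Your closing worry that the difference might ``vanish for formal reasons'' is exactly right --- it does vanish on the suspension spectrum --- and your proposal contains no mechanism to evade this. Relatedly, the well-definedness of $\rho$ against the choice of $\tmf$-Thom isomorphism is not formal (the bundles are not string, so no off-the-shelf orientation applies); the paper resolves it by imposing that orientations restrict to the canonical $j$-orientation on a summand of the Adams splitting of $\suspp\CP^5$, and some such device is needed before $\rho$ is an invariant at all.
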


\begin{rmk}\label{rmk:S5_nec_suff} In Subsection~\ref{subsec:schwarzenberger}, we show that the Schwarzenberger condition $S_5$ is necessary and sufficient for three integers to be the Chern classes of a rank $3$ bundle on $\CP^5$. We also give $S_5$ explicitly in this section. \end{rmk}

A priori, there is no simple geometric relationship between topologically distinct bundles with the same Chern classes. However, in both the case of rank $2$ bundles on $\CP^3$ and the case of rank $3$ bundles on $\CP^5$, any two bundles with the same Chern classes differ by an explicit action defined as follows.
\begin{const}\label{const:action_Z3} Associated to an inclusion $D^{2n} \hookrightarrow \CP^{n}$ of a disk in the top cell of $\CP^{n}$, we define $$Q\:\CP^n \to S^{2n} \vee \CP^n$$ by collapsing the boundary of $D^{2n}$ to a point.
Given vector bundles
$V\: \CP^n \to BU{\hspace{-.6pt}}(n)$ and $\sigma\: S^{2n} \to BU{\hspace{-.6pt}}(r)$ we define
$$\sigma V:= (\sigma \vee V )\circ Q\: \CP^n \to BU{\hspace{-.6pt}}(r).$$
Diagrammatically:
\begin{center}
\begin{tikzcd}[row sep=1.5em, column sep=3.5em]
& S^{2n} \arrow[d,"\iota_1" left]\arrow[dr, "\sigma"] \\
\CP^n \arrow[r,"Q"] & S^{2n} \vee \CP^n \arrow[r, "\sigma \vee V" near start] & BU{\hspace{-.6pt}}(r)\\
& \CP^n \arrow[u, "\iota_2" left] \arrow[ur, "V" below] &
\end{tikzcd}
\end{center}
where $\iota_1$ and $\iota_2$ are the standard maps of the summands into the wedge. The association $$(\sigma, V) \mapsto \sigma V$$ defines an action of $\pi_{2n}BU{\hspace{-.6pt}}(r)$ on equivalence classes of rank $r$ vector bundles over $\CP^{n}$.
\end{const}
This action preserves Chern classes provided that $n>r$. Therefore, if nontrivial, this action gives topologically distinct bundles with the same Chern classes.

In the case that $r=2$ and $n=3$, the action of $\pi_6BU{\hspace{-.6pt}}(2)\simeq \Z/2$ on rank $2$ bundles on $\CP^3$ with fixed Chern data is transitive, and is free if and only if $c_1 \equiv 0 \pmod 2$. This aligns with the enumeration of bundles with fixed Chern data in Theorem~\ref{thm:AR} above. The theorem below shows that the role of Construction~\ref{const:action_Z3} in analyzing rank $3$ bundles on $\CP^5$ is analogous.
\begin{theorem}\label{classification}
Let $a_1,$ $a_2$, and $a_3$ be integers satisfying $S_5$. Let $\V_{a_1,a_2,a_3}$ be the set of topological isomorphism classes of rank $3$ bundles on $\CP^5$ with $i$-th Chern class equal to $a_i$. Then:
\begin{enumerate}
\item The action of $\pi_{10}BU{\hspace{-.6pt}}(3) \simeq \Z/3$ on rank $3$ bundles over $\CP^5$, as given in Construction~\ref{const:action_Z3}, induces a transitive action on $\V_{a_1,a_2,a_3}.$
\item If $a_1$ or $a_2$ is nonzero $\bmod$ $3$, then the action of $\pi_{10}BU{\hspace{-.6pt}}(3)$ on $\V_{a_1,a_2,a_3}$ is trivial.
\item If $a_1$ and $a_2$ are zero $\bmod$ $3$, then the action of $\pi_{10}BU{\hspace{-.6pt}}(3)$ on $\V_{a_1,a_2,a_3}$ is free.
\end{enumerate}
\end{theorem}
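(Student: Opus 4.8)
The plan is to recognize the action of Construction~\ref{const:action_Z3} as the coaction coming from the top-cell cofiber sequence of $\CP^5$, and then to analyze its orbits and stabilizers. Write $\CP^5 = \CP^4 \cup_f e^{10}$ with attaching map $f\: S^9 \to \CP^4$, so that collapsing $\CP^4$ yields the Barratt--Puppe exact sequence
\[
[\Sigma \CP^4, BU(3)] \xrightarrow{(\Sigma f)^*} \pi_{10}BU(3) \xrightarrow{p^*} [\CP^5, BU(3)] \xrightarrow{i^*} [\CP^4, BU(3)].
\]
The map $Q$ of Construction~\ref{const:action_Z3} is precisely the coaction $\CP^5 \to \CP^5 \vee S^{10}$ attached to this sequence, so the $\pi_{10}BU(3)$-action is the standard one whose orbits are exactly the fibers of the restriction map $i^*$. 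Everything then reduces to two inputs: identifying the fiber of $i^*$ over a bundle with Chern data $(a_1,a_2,a_3)$ with $\V_{a_1,a_2,a_3}$, and computing the stabilizer of a point in that fiber.

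For (1) I would first show that a rank $3$ bundle on $\CP^4$ is determined up to homotopy by its Chern classes. This is pure obstruction theory: building a map $\CP^4 \to BU(3)$ cell by cell, the groups $\pi_2, \pi_4, \pi_6$ of $BU(3)$ are each $\Z$ and their indeterminacies record $c_1, c_2, c_3$, the odd groups $\pi_1,\pi_3,\pi_5$ vanish so no lower obstruction occurs, the only possible obstruction lies in $H^8(\CP^4;\pi_7 BU(3)) \cong \Z/6$ and vanishes because the bundles in question already live on $\CP^5$, and the final extension is unique since $\pi_8 BU(3) \cong \pi_7 SU(3) = 0$. Given this, any two bundles in $\V_{a_1,a_2,a_3}$ have homotopic restrictions to $\CP^4$ and so lie in one fiber of $i^*$; exactness identifies that fiber with a single $\pi_{10}BU(3)$-orbit, proving transitivity. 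As $\pi_{10}BU(3)\cong\Z/3$ has prime order, each orbit has size $1$ or $3$, and (2) and (3) amount to deciding which.

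For (2) and (3) I would compute the stabilizer $\mathrm{Stab}(V)\subseteq \Z/3$ as the image of the $g$-twisted connecting homomorphism $\partial_g\: \pi_1(\mathrm{Map}(\CP^4, BU(3)), g) \to \pi_{10}BU(3)$ of the restriction fibration $\Omega^{10}BU(3) \to BU(3)^{\CP^5} \to BU(3)^{\CP^4}$, where $g = V|_{\CP^4}$. This $\partial_g$ is a $g$-twisted form of $(\Sigma f)^*$, and its dependence on the Chern data is controlled at the prime $3$ by the Steenrod operation $P^1$ and its interaction with $g$: in $\CP^4$, $P^1$ links the degree-$2$ cell to the degree-$6$ cell and the degree-$4$ cell to the degree-$8$ cell with unit coefficients mod $3$, and these are exactly the cells carrying $c_1$ and $c_2$ (which is why $c_3$ does not enter). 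When $a_1$ or $a_2$ is a unit mod $3$, this forces $\partial_g$ to be surjective, so $\mathrm{Stab}(V) = \Z/3$, i.e.\ $\sigma V \simeq V$ for all $\sigma$: the action is trivial and the orbit has size $1$, giving (2). When $a_1 \equiv a_2 \equiv 0 \pmod 3$ the twisted $P^1$-contribution vanishes and this primary analysis is inconclusive; here I would invoke the separating invariant $\rho$, showing $\rho(V)$, $\rho(\sigma V)$, $\rho(\sigma^2 V)$ are distinct, which forces $\mathrm{Stab}(V) = 0$, a free action, and an orbit of size $3$, giving (3).

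The main obstacle is exactly this last detection. Ordinary cohomology is useless across an orbit, since all three bundles share the same Chern classes; one needs an invariant sensitive to the $\Z/3$ generated $3$-locally by an $\alpha_1$-family class in $\pi_{10}BU(3)$. This is precisely the role of $\tmf$ localized at $3$: it detects $\alpha_1$ where ordinary cohomology and complex $K$-theory cannot, and $\rho$ is engineered as a $\tmf$-cohomology class on the relevant Thom spectrum so as to evaluate nontrivially on the difference between $V$ and $\sigma V$. Thus the crux of the argument is establishing that $\rho$ genuinely separates the orbit in case (3); transitivity and the collapse in case (2) are comparatively formal consequences of obstruction theory and the $3$-local cell structure of $\CP^5$.
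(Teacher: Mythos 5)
Your framing is sound and matches the paper's own perspective (Remark~\ref{rmk:top_cell}): the action of Construction~\ref{const:action_Z3} is the top-cell coaction, its orbits are the fibers of restriction to $\CP^4$, and transitivity on $\V_{a_1,a_2,a_3}$ follows once $[\CP^4,BU(3)]$ is known to be detected by Chern classes; your skeletal argument for that is essentially what the paper does (prime by prime, via the fracture square). The substance of parts (2) and (3), however, is the computation of the image of the $g$-twisted connecting homomorphism into $\pi_{10}BU(3)\simeq\Z/3$, and here there is a genuine gap. You assert that surjectivity when $a_1$ or $a_2$ is a unit mod $3$ is ``forced'' by $P^1$ linking the $2$- and $4$-cells of $\CP^4$ to higher cells, but that is not the operative mechanism and no derivation is given. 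In the paper's computation the term coming from $P^1$ applied to classes on $S^1\times\CP^5$ vanishes ($P^1(\iota_1t^3)=3\iota_1t^5\equiv 0$); what survives is the failure of the $k$-invariant $U\:P_9\to K(\Z/3,11)$ to be primitive for the principal fibration structure, whose cross-terms $\iota_2^2\iota_7'-\iota_4\iota_7'-\iota_2\iota_9'$ come from the decomposable parts of the Wu formulas $P^1c_2=c_1^2c_2+c_2^2-c_1c_3$ and $P^1c_3=c_3(c_1^2+c_2)$. Evaluating yields the image $\{-(ya_1-xa_1^2+xa_2)\iota_1t^5 : x,y\in\Z/3\}$ --- note the quadratic term $a_1^2$, which your heuristic does not predict --- and obtaining this requires the Serre spectral sequence and Kudo transgression analysis of Claims~\ref{claim:princ_fib_BU3}--\ref{claim:m_upper_U}. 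That computation is the heart of the theorem and is absent from the proposal.

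Part (3) is also misdiagnosed. You call the primary analysis ``inconclusive'' when $a_1\equiv a_2\equiv 0\pmod 3$ and propose to invoke $\rho$, but by Lemma~\ref{lem:torsor} the set of extensions over the top cell is a torsor for the cokernel of the connecting map: the same formula that gives (2) shows the image is zero in this case, so the cokernel is all of $\Z/3$ and the action is automatically free --- no secondary invariant is needed. Invoking $\rho$ also inverts the paper's architecture, since Theorem~\ref{classification} is proved before $\rho$ is even defined. Your route is not strictly circular (the additivity $\rho(\sigma V)=\rho(V)+t(\sigma)$ of Proposition~\ref{prop:rho_works} and the fact that $t$ is an isomorphism do not presuppose Theorem~\ref{classification}), but it defers the entire burden of part (3) to the construction of $\tilde\rho$ and the orientation theory of Sections~\ref{sec:exist} and~\ref{sec:untwisting}, none of which the proposal establishes; as written, part (3) rests on an unproved appeal to the hardest part of the paper when a one-line consequence of the part (2) computation suffices.
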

This refines the enumeration result in Theorem~\ref{thm:combined}.

Theorem~\ref{classification} says that, if $a_1,a_2,a_3$ satisfy $S_5$, $a_1\equiv 0 \pmod 3$, and $a_2\equiv 0\pmod 3$, then the set of complex rank $3$ topological vector bundles on $\CP^5$ with $i$-th Chern class $a_i$ is a torsor for $\Z/3$. The main goal of the rest of the paper is to trivialize this torsor via a bundle invariant. To explain our approach to defining such an invariant for rank $3$ bundles on $\CP^5$, we discuss the $\alpha$-invariant of rank $2$ bundles on $\CP^3$ in greater detail.

The Atiyah--Rees invariant $\alpha$ is initially defined for rank $2$ bundles with $c_1 = 0$. Such bundles are classified by maps to $BSU{\hspace{-.6pt}}(2)$, allowing an invariant to be defined via a universal class in the generalized cohomology of $BSU{\hspace{-.6pt}}(2)$ rather than $BU{\hspace{-.6pt}}(2)$. Atiyah and Rees give a class $\alpha\in KO^{4}(BSU{\hspace{-.6pt}}(2))$, where $KO$ denotes real $K$-theory. They define the $\alpha$-invariant of $V\: \CP^3 \to BSU{\hspace{-.6pt}}(2)$ as $$\alpha(V):=p_*V^*(\alpha)\in KO^{-2}(\text{point})\simeq \Z/2,$$
where $V^*$ is pullback with respect to $V$ and $p_*\: KO^*(\CP^3)\to \KO^{*-6}(\text{point})$ is the $KO$-theory pushforward for the spin manifold $\CP^3$.
They extend $\alpha$ to bundles with $c_1(V)\equiv 0 \pmod 2$ by $$\alpha(V):= \alpha
\left(V \otimes \mathcal O(-\frac{c_1(V)}{2})\right).$$

Alternatively, the Atiyah--Rees invariant can be rephrased as a {\em twisted characteristic class}. Recall that, given a virtual bundle $W$ over a space $X$, the Thom spectrum of $X$ with respect to $W$, written $\Th{X}{W}$, can be viewed as a twisted version of the suspension spectrum $\suspp X$. By a twisted characteristic class, we will mean a class in some generalized cohomology of a Thom spectrum over a classifying space. Let $\gamma_2$ denote the universal bundle on $BU(2)$. One can show that there is a class 
\[\tilde \alpha\in KO^*(\Th{BU{\hspace{-.6pt}}(2)}{\smallminus\gamma_2})\] which extends $\alpha$ in a precise sense. Given any rank $2$ vector bundle on $\CP^3$, the pullback of $\tilde \alpha$ gives a class
$$\tilde \alpha(V):=V^*{\tilde \alpha}\in KO^4(\Th{\CP^3}{\smallminus V}).$$
If $c_1(V)\equiv 0\pmod 2,$ $V$ is canonically $KO$-oriented, yielding a $KO$-Thom isomorphism $$KO^*(\Th{\CP^3}{\smallminus V})\simeq KO^*(\suspp \CP^3).$$ We can thus define
$$\alpha'(V)=p_*(\tilde\alpha(V))\in KO^{-2}(\text{point})\simeq \Z/2,$$ where as before $p_*$ is the $KO$-theory pushforward.
The invariant $\alpha'$ also distinguishes rank $2$ bundles on $\CP^3$ with $c_1\equiv 0 \pmod 2$ and agrees with the original $\alpha$-invariant when $c_1(V)=0$.

The insight here is that both $BSU{\hspace{-.6pt}}(2)$ and $\Th{BU{\hspace{-.6pt}}(2)}{\smallminus\gamma_2}$ stabilize $\pi_6BU{\hspace{-.6pt}}(2)$, in the following sense. While $\pi_6BU{\hspace{-.6pt}}(2)\simeq \Z/2,$ the stable homotopy group $\pi_6\left(\susp BU{\hspace{-.6pt}}(2)\right)$ is trivial, so bundles differing by an element in the unstable group $\pi_6BU{\hspace{-.6pt}}(2)$ cannot be distinguished by a characteristic class in the generalized cohomology of $BU{\hspace{-.6pt}}(2)$ itself. However, both $\pi_6\left( \susp BSU{\hspace{-.6pt}}(2)\right)$ and $\pi_6\Th{BU{\hspace{-.6pt}}(2)}{\smallminus\gamma_2}$ are nontrivial and are canonically isomorphic to $\pi_6BU{\hspace{-.6pt}}(2)$, permitting their $KO$-cohomology to supply the classes $\alpha$ and $\alpha'$, respectively.

Recalling Theorem~\ref{classification}, the relevant group for understanding rank $3$ bundles on $\CP^5$ is $\pi_{10}BU{\hspace{-.6pt}}(3)$. We also find that $\pi_{10}BU{\hspace{-.6pt}}(3)\simeq \Z/3$ is stably trivial. By the above discussion, we might attempt to classify rank $3$ bundles on $\CP^5$ by first stabilizing $\pi_{10}BU{\hspace{-.6pt}}(3)$ and then detecting it with some generalized cohomology theory.
Indeed, our strategy to define an invariant of rank $3$ bundles on $\CP^5$ is as follows:
\begin{itemize}
\item We identify a Thom spectrum related to $BU{\hspace{-.6pt}}(3)$ which stabilizes $\pi_{10}BU{\hspace{-.6pt}}(3)$ (Introduction to Section~\ref{sec:exist});
\item We define a twisted characteristic class in an appropriate generalized cohomology of this Thom spectrum, with certain key properties (Section~\ref{sec:exist}); and
\item We show that our twisted characteristic class can be resolved to an honest invariant $\rho$, via orientation data, and that this invariant distinguishes vector bundles with the same Chern data (Section~\ref{sec:untwisting}).
\end{itemize}

The main result of Section~\ref{sec:exist} can be stated as follows:
\begin{thm}\label{thm:imprecise_existence} Let $\BUc$ be the homotopy fiber of $c_1{\hspace{-.6pt}}\pmod 3\: BU{\hspace{-.6pt}}(3) \to K(\Z/3,2)$. Let $\tmf$ denote the $3$-localization of the spectrum of topological modular forms, obtained by inverting all primes other than $3$. There is a
class $$\tilde \rho \in \tmf^{-3}(\sk^{26}\BUct)$$ such that the pullback of $\tilde \rho$ with respect to the Thomificiation of a generator for $\pi_{10}\BUc$ induces an isomorphism
\begin{equation}\label{eq:isom_pi_10}\pi_{10}\BUc \simeq \pi_{13}\tmf.\end{equation}
\end{thm}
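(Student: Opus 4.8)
The plan is to reduce the statement to the construction of a single class with a prescribed leading term, and then to build that class by obstruction theory against the Atiyah--Hirzebruch spectral sequence for $\tmf$. First I would pin down the degrees. Writing $\BUct = \Th{\BUc}{W}$ for the appropriate rank-$0$ virtual bundle $W$ defining $\BUct$, the Thomification of a generator $g \in \pi_{10}\BUc$ is a (stable) map detecting a $10$-cell, i.e.\ an element $\hat g \in \pi_{10}(\sk^{26}\BUct)$, so for any class $\tilde\rho \in \tmf^{-3}(\sk^{26}\BUct)$ the pullback $\hat g^{*}\tilde\rho$ lies in $\widetilde{\tmf}^{-3}(S^{10}) \cong \pi_{13}\tmf$. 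Since $\pi_{10}\BUc \cong \Z/3$, the Thomification identifies this with $\pi_{10}(\sk^{26}\BUct)$, and $\pi_{13}\tmf \cong \Z/3$ is generated $3$-locally by the product $\alpha\beta$ of the classes $\alpha \in \pi_3\tmf$ and $\beta \in \pi_{10}\tmf$. It therefore suffices to produce a class $\tilde\rho$ whose restriction to the cell hit by $\hat g$ is a generator of $\pi_{13}\tmf$; the induced homomorphism $\Z/3 \to \Z/3$ is then forced to be an isomorphism.

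Second, I would compute $H^{*}(\sk^{26}\BUct;\mathbb{F}_3)$ together with its Steenrod-module structure. The fibration $K(\Z/3,1) \to \BUc \to BU(3)$ coming from the definition of $\BUc$, together with the known cohomology of $BU(3)$, determines $H^{*}(\BUc;\mathbb{F}_3)$ through the relevant range, and the condition $c_1 \equiv 0 \pmod 3$ built into $\BUc$ furnishes an $\mathbb{F}_3$-Thom class, hence a Thom isomorphism $H^{*}(\BUct;\mathbb{F}_3) \cong H^{*}(\BUc;\mathbb{F}_3)$. The essential point is that this isomorphism twists the action of the Steenrod powers by the mod-$3$ Chern classes of $W$; it is precisely this twist that makes the attaching maps of $\BUct$ in the relevant range realize the $3$-primary stable elements $\alpha_1$ and $\beta_1$, and thereby makes $\BUct$ visible to $\tmf$, which detects $\beta_1$.

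Third, I would run the cohomological Atiyah--Hirzebruch spectral sequence $E_2^{p,q} = H^{p}(\sk^{26}\BUct; \pi_{-q}\tmf) \Rightarrow \tmf^{p+q}(\sk^{26}\BUct)$ and locate, in total degree $-3$, the class $x \in E_2^{10,-13} = H^{10}(\sk^{26}\BUct; \pi_{13}\tmf)$ dual to the cell hit by $\hat g$. I would then show that $x$ survives to $E_\infty$: the outgoing differentials $d_r(x)$ land in $H^{10+r}(\sk^{26}\BUct; \pi_{12+r}\tmf)$ and the incoming differentials originate in $H^{10-r}(\sk^{26}\BUct; \pi_{14-r}\tmf)$, and both families are controlled by the twisted Steenrod structure from the previous step together with the known differential pattern of the $\tmf$-spectral sequence at the prime $3$ (governed in low filtration by $\mathcal{P}^1$ and the higher operations detecting the $\beta$-family). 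Truncating to the $26$-skeleton removes the higher cells that would otherwise carry obstructions beyond this range, so that $x$ lifts to an honest class $\tilde\rho \in \tmf^{-3}(\sk^{26}\BUct)$; because its leading term is $x$ and $\hat g$ hits exactly the $10$-cell, $\hat g^{*}\tilde\rho$ is a generator of $\pi_{13}\tmf$, completing the argument.

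The hard part will be the verification in the third step that $x$ is a permanent cycle whose associated class restricts to a \emph{generator} of $\pi_{13}\tmf$ rather than to zero. This is exactly where the $3$-primary subtlety of $\tmf$ enters: one must show simultaneously that the AHSS differentials do not kill the $10$-dimensional class, and that this class is genuinely detected --- equivalently, that $\tmf$ sees the $\beta_1$-family packaged into the attaching maps of $\BUct$ and registers it as the product $\alpha\beta$ after the degree-$(-3)$ shift. I expect this to require careful bookkeeping of the twisted $\mathcal{P}^1$-action computed in the second step, matched against the $3$-local homotopy of $\tmf$ through degree $13$, and it is the step most sensitive to the precise choice of $W$ and of the $26$-skeleton.
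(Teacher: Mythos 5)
There is a genuine gap, and it sits exactly at the step you flag as "the hard part": the assumption that the Thomification $\hat g$ of a generator of $\pi_{10}\BUc$ "detects a $10$-cell," so that $\hat g^{*}\tilde\rho$ can be read off from a leading term $x\in E_2^{10,-13}$ of the Atiyah--Hirzebruch spectral sequence. The generator of $\pi_{10}\BUc\cong\Z/3$ is a torsion class that factors as $S^{10}\xrightarrow{\alpha_1}S^{7}\xrightarrow{\epsilon}S^{4}\to \BUc$; in particular its Hurewicz image in $H_{10}(\BUc;\Z)$ is zero, so $\hat g^{*}$ vanishes on $H^{10}(\sk^{26}\BUct;\pi_{13}\tmf)$. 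Consequently $\hat g^{*}$ carries the filtration-$\geq 10$ part of $\tmf^{-3}(\sk^{26}\BUct)$ into filtration $\geq 11$ of $\tmf^{-3}(\sphere^{10})$, which is zero. Any class built to have AHSS filtration exactly $10$, as you propose, therefore pulls back to $0$ along $\hat g$ --- the opposite of what the theorem requires. The desired nonvanishing is a secondary/tertiary phenomenon invisible to leading terms: it comes from the Toda bracket $\langle\alpha_1,\alpha_1,\alpha_1\rangle=\beta_1$, not from a primary cohomology class in degree $10$.

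This is precisely why the paper routes the argument through $C(\alpha_1)$ rather than through a degree-$10$ class. Using $P^1u=-c_2\cdot u$ and $P^1P^1u=0$ one gets a map $k\:C(\alpha_1)\to\sk^{26}\BUct$ on the bottom two cells; Proposition~\ref{prop:splitting} shows $k\otimes\tmf$ admits a retraction $r$ (the AHSS computation there plays the role of your "obstruction bookkeeping," but for the mapping spectrum $\operatorname{Maps}_{\tmf}(\Sigma^{-1}C\otimes\tmf,C(\alpha_1)\otimes\tmf)$, not for $\tmf^{*}(\BUct)$ itself). Then $\tilde\rho$ is defined as $r$ followed by an extension of $\alpha_1\:\sphere^{0}\to\Sigma^{-3}\tmf$ over $C(\alpha_1)$; its AHSS filtration is $0$ (it restricts to $\alpha_1\in\pi_3\tmf$ on the Thom cell), and $\hat g^{*}\tilde\rho=\alpha_1\beta_1$ is computed from $\big(\sphere^{10}\xrightarrow{\alpha_1}\sphere^{7}\to C(\alpha_1)\big)=\beta_1[0]$. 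A secondary issue with your write-up: the identification $\pi_{10}\BUc\cong\pi_{10}(\sk^{26}\BUct)$ is itself nontrivial (it requires that $\operatorname{Th}(S^{4};-\gamma_3)$ be $C(\alpha_1)$ rather than split, which is where the twist by $-\gamma_3$ and the condition $c_1\equiv 0\pmod 3$ really enter), and your proposal asserts it without argument. To salvage your strategy you would have to replace the filtration-$10$ analysis by a detection argument on the $0$- and $4$-cells together with the bracket relation, at which point you have essentially reconstructed the paper's proof.
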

The class $\tilde \rho$ and isomorphism \eqref{eq:isom_pi_10} tell us that the cohomology theory $\tmf$ stably detects $\pi_{10}BU{\hspace{-.6pt}}(3)$ and therefore retains information about the bundles of interest. Under pullback, the class $\tilde\rho$ together with Thom isomorphisms determined by orientation data give rise to the invariant $\rho$ of Theorem~\ref{thm:combined}, as follows.

Theorem~\ref{thm:imprecise_existence} gives an association
\begin{equation}\label{eq:twisted_invariant_bad} V \mapsto \operatorname{Th}(V)^*(\tilde \rho) \in \tmf^{-3}(\Th{\CP^5}{\smallminus V}),\end{equation}
where $\Thom(V)$ denotes the Thomification of the classifying map $V\: \CP^5 \to BU{\hspace{-.6pt}}(3)$. Equation~\eqref{eq:twisted_invariant_bad} does not define an invariant of $V$ because the target depends on $V$ itself.
However, vector bundles with $c_1\equiv 0 \pmod 3$ and $c_2\equiv 0 \pmod 3$ are $\tmf$-orientable and therefore admit a $\tmf$-Thom isomorphisms $\tmf^*(\Th{\CP^5}{\smallminus V})\simeq \tmf^*(\suspp \CP^5)$. The problem is not quite solved: we need a consistent way of choosing Thom isomorphisms. This is the main project of Section~\ref{sec:untwisting}, which involves a detailed study of $\tmf$-orientations for the relevant bundles. The problem cannot be reduced to a known orientation problem (e.g. using the celebrated string orientation for topological modular forms \cite{AHR}).

\subsecl{Paper outline}{subsec:outline}

The proof of Theorem~\ref{classification} is the main project of Section~\ref{sec:count} and proceeds via
analyses of the set of homotopy classes of maps from $\CP^5$ to $BU{\hspace{-.6pt}}(3)$ completed at the primes $3$ and $2$. These arguments are carried out in Subsections~\ref{subsec:post_BU3} and \ref{subsec:2local}, respectively, and involve obstruction-theoretic arguments. Subsection~\ref{subsec:technical_claims_1} proves claims used in Subsection~\ref{subsec:post_BU3}. In Subsection~\ref{subsec:schwarzenberger} we compute the Schwarzenberger condition explicitly and show that it is necessary and sufficient for three integers to be the Chern classes of a rank $3$ bundle on $\CP^5$.

In Subsection~\ref{subsec:start_existence_proof}, we outline our method to produce the class $\tilde \rho$ in the $\tmf$-cohomology of $\sk^{26}\BUct$. The remaining Subsections~\ref{subsec:proof_exists}, \ref{subsec:cohomology_BUct}, and \ref{subsec:proof_unique} supply the details of the proof, which includes a uniqueness result. This concludes the proof of Theorem~\ref{thm:imprecise_existence}.

In Subsection~\ref{subsec:background_orient}, we review the theory of Thom isomorphisms and orientations and establish notation. In Subsection~\ref{subsec:choiceOrientation}, we study orientations of rank $3$ bundles on $\CP^5$ with $c_1\equiv 0 \pmod 3$ and $c_2\equiv 0\pmod 3$ and isolate a desirable set of $\tmf$-orientations. Using this set of orientations, we can produce a well-defined invariant: in Section~\ref{subsec:rho_works}, we combine orientation data with $\tilde \rho$ to define the invariant $\rho$ of complex rank $3$ topological bundles on $\CP^5$ with $c_1\equiv 0\pmod 3$ and $c_2\equiv 0\pmod 3$. We prove that $\rho$ separates topological equivalence classes of rank $3$ bundles on $\CP^5$ with the same Chern data. This completes the proof of Theorem~\ref{thm:combined}.

The remaining subsections offer examples and suggest future directions. In Subsection~\ref{subsec:sum_line_bundles}, we show that $\rho(L^{\oplus 3})=0$ for $L$ a line bundle with $c_1(L)\equiv0\pmod 3$. We also state an additivity result for $\rho$ on sums of line bundles.
In Subsection~\ref{subsec:rank_2}, we show that the methods discussed in this paper also produce a $3$-local invariant of rank $2$ bundles.

\subsecl{Acknowledgements}{subsec:acknowledgements}

First and foremost I want to thank my PhD advisor, Mike Hopkins, for suggesting this project and for his immense support throughout my PhD program. I am also immensely grateful to Haynes Miller for his mentorship during my time in graduate school; and to both Haynes and Elden Elmanto for serving on my dissertation committee and offering feedback on my thesis write-up. After my move to UCLA, Mike Hill's guidance and encouragement -- mathematical and practical -- were invaluable for me while improving and revising my thesis. Hood Chatham and Jeremy Hahn were both extremely generous in offering specific suggestions for methods and strategies used in this paper. This work benefited greatly from my conversations with Aravind Asok, Lukas Brantner, Yang Hu, Brian Shin, Dev Sinha, Alexander Smith, and Dylan Wilson.

While working on this project, the author was supported by the National Science Foundation under Award No.~2202914.

\subsecl{Conventions}{subsec:conventions}
\begin{itemize}
\item ``Vector bundle" will refer to a complex, topological vector bundle. As such, we use ``vector bundle" and ``map to $BU{\hspace{-.6pt}}(r)$" interchangeably. ``Rank" refers to complex rank.
\item Given spaces $X$ and $Y$, we write $[X,Y]$ for homotopy classes of maps from $X$ to $Y$.
\item $H^*$ will refer to ordinary cohomology with $\Z$ coefficients, except otherwise stated. We write $\HF{p}{*}$ for cohomology with $\mathbb F_p=\Z/p$ coefficients.
\item If $C$ is a space, then $\pi_*C$ will refer to {\em unstable} homotopy groups. If $X$ is a spectrum, $\pi_*X$ will refer to its stable homotopy groups. Thus, the stable homotopy groups of a space $C$ will be written as $\pi_*(\suspp C)$.
\item Given a space or spectrum $X$, we write $\tau_{n}X$ for its $n$-th Postnikov section.
\item Given virtual complex $W\: X\to BU$ on a topological space $X$, we write $\Th{X}{W}$ for the Thom spectrum associated to the stable homotopy sphere bundle 
\[X \xrightarrow{W} BU \xrightarrow{j} BGL_1\sphere,\] where $j$ is the complex $j$-homomorphism (for a discussion of Thom spectra defined in this way, see \cite[Section 5.1]{ABGHR}). 

\item For $W\: X \to BU(r)$ a finite-rank bundle, let $i_r\: BU(r)\to BU$ denote the map induced on classifying spaces by the inclusion $U(r) \to U$. We define $\Th{X}{W}$ to be the Thom spectrum of the composite $i_r \circ W\: X \to BU$. 
\item Given a space or spectrum $X$, we write $\c{X}{p}$ for its completion at a prime $p$, and $X_{(p)}$ for its localization at $p$, i.e., where all primes other than $p$ are inverted.
\item In Sections~\ref{sec:exist} and \ref{sec:untwisting}, all spaces and spectra are implicitly localized at the prime $3$.
\item Given an $E_\infty$-ring spectrum $R$ and two $R$-modules $X,Y$, we write $$\operatorname{Maps}_R(X,Y)$$ for the space of $R$-module maps from $X$ to $Y$.

\end{itemize}

\section{A count of rank $3$ bundles on $\CP^5$}\label{sec:count}
The primary goal of this section is to prove Theorem~\ref{classification} by computing the set $[\CP^5,BU{\hspace{-.6pt}}(3)]$. For this we need the homotopy of $BU{\hspace{-.6pt}}(3)$ through degree $10$, which can be computed via the fiber sequence
$$BSU{\hspace{-.6pt}}(3) \to BU{\hspace{-.6pt}}(3) \to BU{\hspace{-.6pt}}(1)$$
and its associated homotopy long exact sequence. The homotopy of $U(1) \simeq S^1$ is known and enough of the homotopy of $SU(3)$ is computed in \cite{MT}. We give the result in Figure~\ref{fig:homotopy_BU3}.
\begin{figure}[h]
\begin{tabular}{| M{1.2cm} | M{1cm} | M{1cm} | M{1cm} | M{1cm} |M{1cm} | M{1cm} | M{1cm} | M{1cm} | M{1cm} | M{1cm} | N}
\hline
& \textbf{$\pi_2$} & \textbf{$\pi_3$} & \textbf{$\pi_4$} & \textbf{$\pi_5$} & \textbf{$\pi_6$} & \textbf{$\pi_7$} & \textbf{$\pi_8$} & \textbf{$\pi_9$} & \textbf{$\pi_{10}$} \\
\hline
& & & & & & & & &
\\[-8pt]
$BU{\hspace{-.6pt}}(3)$ & $\Z$ &0 & $\Z$ & 0 & $\Z$ & $\Z/6$ &0 & $\Z/12$ & $\Z/3$ \\[2pt]
\hline
\end {tabular}\caption{Homotopy of BU{\hspace{-.6pt}}(3)}\label{fig:homotopy_BU3}
\end{figure}

Note that the torsion in $\pi_nBU{\hspace{-.6pt}}(3)$ for $n\leq 10$ is either $2$- or $3$- primary. Thus we may break the computation into analyses at the primes 2 and 3.

The key tool that allows us to study the problem one prime at a time is the theory of rationalization and completion of spaces. Given a space $X$, let $\c{X}{p}$ denote its $p$-completion.
The Fracture Theorem for completion, as stated in \cite[Theorem 13.1.1]{MP}, implies the following:
\begin{lem}\label{cor:fracture} Let $X$ be a CW complex of real dimension at most $10$.  An element in $[X,BU{\hspace{-.6pt}}(3)]$ is the same data as pairs of maps
$$f_2\: X \to \c{BU{\hspace{-.6pt}}(3)}{2},\,\,\text{ }f_3\: X \to \c{BU{\hspace{-.6pt}}(3)}{3}$$
such that the $2$- and $3$-completed Chern classes are both in the image of the canonical map $H^*(X;\Z) \to \c{H^*(X;\Z)}{p}$ and agree under this identification.
\end{lem}

We take $X=\CP^5$ and apply the previous result. The in-depth analysis in the proof of Theorem~\ref{classification} is calculating $[\CP^5,\c{BU{\hspace{-.6pt}}(3)}{3}]$. This is carried out in Subsection~\ref{subsec:post_BU3}, with supporting technical results in Subsection~\ref{subsec:technical_claims_1}. In Subsection~\ref{subsec:2local}, we compute $[\CP^5,\c{BU{\hspace{-.6pt}}(3)}{2}]$. In Subsection~\ref{subsec:schwarzenberger}, we show that the Schwarzenberger condition $S_5$ is necessary and sufficient for three integers to be the Chern classes of a rank $3$ bundle on $\CP^5$. This completes the proof of Theorem~\ref{classification} and justifies Remark~\ref{rmk:S5_nec_suff}.

\subsecl{$3$-complete rank $3$ vector bundles on $\CP^5$}{subsec:post_BU3}

We give the first stages of a Postnikov-type tower for the $3$-completion of $BU{\hspace{-.6pt}}(3)$ and analyze maps from $\CP^5$ into this tower.

\begin{claim}\label{claim:princ_fib_BU3} There is a tower of principal fibrations given by the solid arrows below:
\begin{equation}\label{post1}\begin{tikzcd}[column sep=.5em, row sep= 2em]
K(\Z/3,10)\arrow[r] & P_{10} \arrow[d] \arrow[from=ddl,dashed, "\tau_{10}\,\,\,\,\,\,\," {near end, above}]\\
K(\Z/3,7) \times K(\Z/3,9)\arrow[r, crossing over] & P_9 \arrow[d] \arrow[r, "U"] & K(\Z/3,11) \\
BU{\hspace{-.6pt}}(3)\arrow[r,dashed, "{(c_1,c_2,c_3)}" {below}]\arrow[ur, dashed,"\tau_9" {near end, below}]
&K(\Z,2) \times K(\Z,4)\times K(\Z,6) \arrow[rr,"{k_7\times k_9}"] & & K(\Z/3, 8) \times K(\Z/3,10)
\end{tikzcd}\end{equation}
where $(c_1,c_2,c_3)$ is the product of first three Chern classes, $\tau_9$ is a choice of lift of $(c_1,c_2,c_3)$ to the space $P_9$, and $\tau_{10}$ is a choice of lift of $\tau_{9}$ to $P_{10}$.
Moreover, $(c_1,c_2,c_3)$ induces a $3$-complete equivalence after $6$-truncation; $\tau_9$ induces a $3$-complete equivalence after $9$-truncation; and $\tau_{10}$ induces a $3$-complete equivalence after $10$-truncation.
\end{claim}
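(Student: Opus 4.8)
The plan is to construct the $3$-complete Postnikov tower of $BU(3)$ stage by stage through degree $10$, using the homotopy groups recorded in Figure~\ref{fig:homotopy_BU3}; $3$-completely these are $\Z,0,\Z,0,\Z,\Z/3,0,\Z/3,\Z/3$ in degrees $2$ through $10$. Each Postnikov stage is a principal fibration classified by a $k$-invariant in the mod-$3$ cohomology of the preceding stage, so once the formal skeleton of the tower is in place the real content is to compute these $k$-invariants from the cohomology of $BU(3)$ and to check that the tower assembles in the stated shape. Throughout I would use the fibration $BSU(3)\to BU(3)\to BU(1)$ together with $\HF{3}{*}(BU(3))=\mathbb{F}_3[c_1,c_2,c_3]$ and its Steenrod module structure as the basic computational inputs.

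For the bottom of the tower I would first identify $\tau_6 BU(3)$. Its only nonzero $3$-complete homotopy groups lie in degrees $2,4,6$ and are each $\Z$, so any $k$-invariant linking them would lie in an odd-degree cohomology group of a product of the spaces $K(\Z,2),K(\Z,4),K(\Z,6)$. A short check shows these vanish $3$-locally in the relevant range --- $\HF{3}{5}(K(\Z,2))=0$ and $\HF{3}{7}(K(\Z,2)\times K(\Z,4))=0$ --- so there is no room for nontrivial attaching data and $\tau_6 BU(3)\simeq K(\Z,2)\times K(\Z,4)\times K(\Z,6)$ after $3$-completion. I would realize this equivalence by the triple $(c_1,c_2,c_3)$, checking that the Chern classes induce isomorphisms on $\pi_2,\pi_4,\pi_6$ by observing that $c_i$ restricts to a generator of the corresponding homotopy group; the $3$-connectivity of $BSU(3)$ makes the Hurewicz comparison for $c_2$ and $c_3$ transparent.

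Next I would attach the torsion classes $\pi_7=\Z/3$ and $\pi_9=\Z/3$. Since $\pi_8 BU(3)$ vanishes $3$-completely, $\tau_8 BU(3)\simeq \tau_7 BU(3)$, and the goal is to exhibit $k$-invariants $k_7\in\HF{3}{8}(\tau_6 BU(3))$ and $k_9\in\HF{3}{10}(\tau_6 BU(3))$, \emph{both defined on $\tau_6$}, whose common fiber is the product $K(\Z/3,7)\times K(\Z/3,9)$; one then sets $P_9:=\operatorname{fib}(k_7\times k_9)$. The homotopy groups of this fiber are forced to match $\tau_9 BU(3)$ regardless of the choice of classes, so the real issues are (i) to pin down $k_7$ and $k_9$ as the correct classes, which I would do by identifying the relevant transgressions together with the action of the Steenrod operations $P^1$ and $\beta$ on $\HF{3}{*}(\tau_6 BU(3))$, and (ii) to show that the genuine $\pi_9$-attaching map, which a priori lives in $\HF{3}{10}(\tau_7 BU(3))$, actually descends to $\tau_6 BU(3)$. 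Step (ii) is the delicate point: I would run the Serre spectral sequence of $K(\Z/3,7)\to\tau_7 BU(3)\to\tau_6 BU(3)$ and use the sparseness of $\HF{3}{*}(K(\Z/3,7))$ in degrees $\le 10$ --- where only $\iota_7$ and $\beta\iota_7$ occur --- to rule out the cross-term $\bar c_1\cdot\beta\iota_7$ surviving, so that no fiber-dependent contribution obstructs descent. With $k_7$ and $k_9$ in hand, obstruction theory produces the lift $\tau_9\:BU(3)\to P_9$ and shows it is a $3$-complete equivalence on $\tau_9 BU(3)$.

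Finally I would attach $\pi_{10}=\Z/3$ by computing the next $k$-invariant $U\in\HF{3}{11}(P_9)$, setting $P_{10}:=\operatorname{fib}(U)$, and producing $\tau_{10}\:BU(3)\to P_{10}$ by the same lifting argument; the homotopy-group bookkeeping then makes $\tau_{10}$ a $3$-complete equivalence on $\tau_{10}BU(3)$. I expect the explicit identification of the three $k$-invariants --- and of $U$ in particular, since it carries the genuinely unstable $3$-primary information that drives the rest of the paper --- to be the main obstacle. Accordingly I would isolate the underlying cohomological computations (the values of $\HF{3}{*}(\tau_6 BU(3))$ and $\HF{3}{*}(P_9)$ in the relevant degrees, the Steenrod action, and the descent of $k_9$) as self-contained technical lemmas, proved separately in Subsection~\ref{subsec:technical_claims_1}, and cite them here so that the construction of the tower itself stays clean.
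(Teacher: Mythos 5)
Your proposal is correct and follows essentially the same route as the paper: identify $\tau_6 BU(3)$ with $K(\Z,2)\times K(\Z,4)\times K(\Z,6)$ via $(c_1,c_2,c_3)$, realize $P_9$ as the fiber of a product $k_7\times k_9$ of classes defined on that base, verify via the Serre spectral sequence and Kudo transgression, and then attach $\pi_{10}$ by a class $U\in\HF{3}{11}(P_9)$. The one organizational difference is that the paper never argues that the genuine $\pi_9$-attaching class descends from $\tau_7$ to $\tau_6$ (your step (ii)); it instead writes down explicit candidates $k_7=P^1\iota_4-\iota_2^2\iota_4-\iota_4^2+\iota_2\iota_6$ and $k_9=P^1\iota_6-\iota_2^2\iota_6-\iota_4\iota_6$ coming from the relations $P^1c_2=c_1^2c_2+c_2^2-c_1c_3$ and $P^1c_3=c_3(c_1^2+c_2)$, so that $(k_7\times k_9)\circ c$ is null for free, and then certifies the choice a posteriori by checking in the Serre spectral sequence for $K(\Z/3,7)\times K(\Z/3,9)\to P_9\to \tau_6$ that the lift is a mod $3$ cohomology isomorphism through degree $10$.
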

At present, the explicit forms of $k_7 \times k_9$ and $U$ are not needed.

Given Claim~\ref{claim:princ_fib_BU3}, we can calculate $[\CP^5,\tau_{10}\c{BU{\hspace{-.6pt}}(3)}{3}] \simeq [\CP^5,\c{BU{\hspace{-.6pt}}(3)}{3}]$ by working up the tower. We need the following standard lemma.
\begin{lem}\label{lem:torsor} Let $X$ be a connected space. For any other space $Y$ let $Y^X$ denote the mapping space.
Given a fiber sequence of connected spaces \begin{equation}\label{eq:fib1} \begin{tikzcd} F \ar[r]& E \ar[r]& B.\end{tikzcd}\end{equation} and a map $f\: X \to E$ so that the composite map to $B$ is nullhomotopic, the set of homotopy classes of choices of lifts of $f$ to $F$ is a torsor for $\operatorname{coker}\big(\pi_1(E^X,f) \to \pi_1(B^X,0)\big)$.
\end{lem}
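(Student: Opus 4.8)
The plan is to apply the mapping-space functor $(-)^X=\operatorname{Maps}(X,-)$ to the fiber sequence \eqref{eq:fib1} and extract the answer from the resulting long exact sequence. Since mapping out of a fixed space preserves homotopy pullbacks, and $F$ is the homotopy fiber of $E \to B$ over the basepoint $b_0$, applying $(-)^X$ yields a fiber sequence of mapping spaces
\[
F^X \xrightarrow{\ \iota_*\ } E^X \xrightarrow{\ q\ } B^X,
\]
in which $F^X$ is identified with the homotopy fiber of $q$ over the constant map $0\colon X \to B$. Under this identification a homotopy class of lift of $f$ to $F$ is exactly a point of $\pi_0 F^X$ whose image under $\iota_*\colon \pi_0 F^X \to \pi_0 E^X$ is $[f]$; that is, the set of homotopy classes of lifts is the fiber of $\pi_0\iota_*$ over $[f]$.

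First I would check this fiber is nonempty. By hypothesis $q(f)=p\circ f$ is nullhomotopic, so $q(f)$ lies in the path component of $0$ in $B^X$; since $F^X=\operatorname{hofib}_0(q)$, any choice of nullhomotopy of $p\circ f$ determines a lift $\tilde f_0\in F^X$ with $\iota_*[\tilde f_0]=[f]$, which I fix as a basepoint. I would then invoke the long exact sequence of the fibration $F^X \to E^X \to B^X$ based at $\tilde f_0$, whose relevant segment is
\[
\pi_1(E^X, f) \xrightarrow{\ q_*\ } \pi_1(B^X, 0) \xrightarrow{\ \partial\ } \pi_0(F^X) \xrightarrow{\ \iota_*\ } \pi_0(E^X).
\]
Here $\pi_1(B^X,0)$ acts on $\pi_0(F^X)$ with $\partial(g)=g\cdot[\tilde f_0]$. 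Exactness at $\pi_0(F^X)$ identifies the orbit of $[\tilde f_0]$ — which is precisely $(\iota_*)^{-1}([f])$ — with $\operatorname{im}(\partial)$, and exactness at $\pi_1(B^X,0)$ shows the stabilizer of $[\tilde f_0]$ is $\operatorname{im}(q_*)$. Thus $\partial$ descends to a bijection
\[
\operatorname{coker}(q_*)=\pi_1(B^X,0)/\operatorname{im}(q_*) \xrightarrow{\ \cong\ } (\iota_*)^{-1}([f]),
\]
exhibiting the set of lifts as a torsor for $\operatorname{coker}\big(\pi_1(E^X,f)\to\pi_1(B^X,0)\big)$. In the applications $B$ is a product of Eilenberg--MacLane spaces, so $\pi_1(B^X,0)$ is abelian, $\operatorname{im}(q_*)$ is normal, and the cokernel is a genuine abelian group acting freely and transitively.

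The routine points here are the preservation of the fiber sequence under $(-)^X$ and the verification of the torsor axioms from orbit--stabilizer; the one spot demanding care is basepoint bookkeeping. Because $p\circ f$ is only nullhomotopic rather than literally the constant map, setting up the long exact sequence requires a chosen nullhomotopy — equivalently a path from $q(f)$ to $0$ in $B^X$ — in order to identify $\pi_1(B^X,q(f))$ with $\pi_1(B^X,0)$ and to pin down $\tilde f_0$. I expect the main obstacle to be tracking that the various choices (of nullhomotopy and of basepoint lift) alter these identifications only through the ambient $\pi_1(B^X,0)$-action, so that the torsor structure asserted by the statement is canonical and independent of them.
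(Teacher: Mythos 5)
Your proof is correct and takes essentially the same approach as the paper's: apply $(-)^X$ to the fiber sequence and extract the torsor structure from the tail of the long exact sequence $\pi_1(E^X,f) \to \pi_1(B^X,0)\to \pi_0(F^X) \to \pi_0(E^X)$. The additional basepoint bookkeeping and orbit--stabilizer discussion you supply is a more detailed rendering of the same argument, not a different route.
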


We apply Lemma~\ref{lem:torsor} to the diagram in Claim~\ref{claim:princ_fib_BU3}. Candidate Chern data $(a_1,a_2,a_2) \in H^{2}(\mathbb CP^5)\times H^{4}(\mathbb CP^5) \times H^{6}(\mathbb CP^5)$ lifts to $P_9$ if and only if $(k_7\times k_9)\circ \left( a_1,a_2,a_3\right) \simeq 0$. This gives a $\mod 3$ condition on Chern classes, which we do not compute since we recover the condition via different methods in Subsection~\ref{subsec:schwarzenberger}.

By Lemma~\ref{lem:torsor}, the number of lifts to $P_9$ are a torsor for a quotient of $$\pi_1\Big (\big(\K(\Z/3,8) \times K(\Z/3,10)\big)^{\CP^5}\Big) \simeq \HF{3}{7}(\CP^5) \times \HF{3}{9}(\CP^5) =0,$$ so when a lift exists it is unique. 

Note that the argument to this point also proves the following:

\begin{cor}\label{cor:rk3p4}Consider the map $c\: \c{BU{\hspace{-.6pt}}(3)}{3} \to K(\c{\Z}{3},2)\times K(\c{\Z}{3},4)\times K(\c{\Z}{3},6)$ given by the product $c_1\times c_2\times c_3$ of three-completed Chern classes. The induced $3$-complete Chern class maps  \[[\CP^4,\c{BU{\hspace{-.6pt}}(3)}{3}] \to H^2(\CP^4,\c{\Z}{3})\times H^4(\CP^4,\c{\Z}{3}) \times H^6(\CP^4,\c{\Z}{3})\] is injective.
\end{cor}

Resuming the argument for $\CP^5$, are no obstructions to lifting from $P_9$ to $P_{10}$, since $\HF{3}{11}(\CP^3)=0$. Choices of lift are a torsor for
$$ \operatorname{coker}\big( \pi_1({P_9}^{\CP^5}) \xrightarrow{U\circ -} \pi_1(K(\Z/3,11)^{\CP^5}) \big)$$
Since $\pi_1(K(\Z/3,11)^{\CP^5}) \simeq \pi_0(\K(\Z/3,10)^{\CP^5}))\simeq \HF{3}{10}(\CP^5) \simeq \Z/3,$ there are two possibilities that will depend on $a_1,a_2,a_3$: \begin{itemize}
\item The map is surjective, the cokernel is trivial, and there is a unique lift; or
\item The map is zero, the cokernel is $\Z/3$ and there are three lifts. \end{itemize}
To compute $\op{Im}(U\circ -),$ we consider a related problem. From the principal fibration \begin{equation}\label{eq:princ_fib_nice}K(\Z/3,7)\times K(\Z/3,9) \to P_9 \to K(\Z,2) \times K(\Z,4) \times
K(\Z,6),\end{equation}
we get an action of the fiber $\big(K(\Z/3,7) \times K(\Z/3,9)\big) \times P_9 \to P_9$. This gives an action
\begin{equation}\label{eq:action_KZ379}\pi_1\Big(K(\Z/3,7)^{\CP^5}\times K(\Z/3,9)^{\CP^5}\Big)\times \pi_1( P_9^{\CP^5})\to \pi_1(P_9^{\CP^5}). \end{equation}
\begin{claim}\label{claim:transitive_KZ379} The action given in Equation~\eqref{eq:action_KZ379} is transitive.
\end{claim}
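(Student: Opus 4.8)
The plan is to reduce transitivity of the action \eqref{eq:action_KZ379} to the vanishing of the odd-degree integral cohomology of $\CP^5$. Write $F = K(\Z/3,7)\times K(\Z/3,9)$ and $B = K(\Z,2)\times K(\Z,4)\times K(\Z,6)$, so that \eqref{eq:princ_fib_nice} is the principal fibration $F \to P_9 \to B$ equipped with its fiber action $F \times P_9 \to P_9$. Applying the functor $(-)^{\CP^5}$ preserves fibrations and this action, producing a fiber sequence $F^{\CP^5} \to P_9^{\CP^5} \to B^{\CP^5}$ together with the action of $F^{\CP^5}$ on $P_9^{\CP^5}$ that induces \eqref{eq:action_KZ379} on $\pi_1$.

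First I would invoke the standard structural fact for principal fibrations: for the action of $\pi_1(F^{\CP^5})$ on $\pi_1(P_9^{\CP^5})$, two classes lie in the same orbit if and only if they have the same image under $p_*\: \pi_1(P_9^{\CP^5}) \to \pi_1(B^{\CP^5})$. Equivalently, via the identification $\pi_1(Y^{\CP^5}) \simeq [\CP^5, \Omega Y]$, the action is the one coming from the looped principal fibration $\Omega F \to \Omega P_9 \to \Omega B$, whose orbits on $[\CP^5, \Omega P_9]$ are exactly the fibers of $[\CP^5,\Omega P_9] \to [\CP^5, \Omega B]$. Transitivity is therefore equivalent to the statement that every class maps to the same element of $\pi_1(B^{\CP^5})$, which holds as soon as $\pi_1(B^{\CP^5})$ is trivial.

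The remaining step is the computation $\pi_1(B^{\CP^5}) = 0$. Using $\Omega B \simeq K(\Z,1)\times K(\Z,3)\times K(\Z,5)$, I get
$$\pi_1(B^{\CP^5}) \simeq [\CP^5, \Omega B] \simeq H^1(\CP^5;\Z) \oplus H^3(\CP^5;\Z) \oplus H^5(\CP^5;\Z),$$
and each summand vanishes because $\CP^5$ has no odd-degree integral cohomology. Hence $p_*$ is the zero map, every class of $\pi_1(P_9^{\CP^5})$ lies in the single orbit of the basepoint, and the action is transitive.

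I do not expect a serious obstacle here: the content is entirely the vanishing of the odd cohomology of $\CP^5$ combined with the principal structure of \eqref{eq:princ_fib_nice}. The only point requiring care is to verify that the orbit description holds uniformly across the (possibly many) components of $P_9^{\CP^5}$ — that is, independently of which lift $f\: \CP^5 \to P_9$ one bases the mapping space at — which again follows from the principal (group-action) structure of the fibration rather than from any special feature of this example.
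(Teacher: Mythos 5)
Your proof is correct and takes essentially the same route as the paper's: both arguments reduce transitivity to the vanishing of $\pi_1\big((K(\Z,2)\times K(\Z,4)\times K(\Z,6))^{\CP^5}\big)$, i.e.\ the vanishing of the odd-degree integral cohomology of $\CP^5$, combined with the principal structure of the fibration \eqref{eq:princ_fib_nice}. The paper phrases this as surjectivity of $s\:\pi_1(F^{\CP^5})\to\pi_1(P_9^{\CP^5})$ read off from the long exact sequence together with the formula $(x,a)\mapsto s(x)a$ for the action, while you phrase it as the orbits being exactly the fibers of the map to $\pi_1$ of the base; these are the same argument.
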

Assuming this claim too, fix $a_1,a_2,a_3 \in \Z$ with $(k_7\times k_9)\circ\left( a_1,a_2,a_3\right)=0$. Consider the
diagram:
\begin{equation}\label{eq:image_BU3}
\begin{tikzcd}[column sep=.8in,row sep=.6in]
{}&P_9\arrow[r,"U"] & K(\Z/3,11)
\\
* \times \CP^5\arrow[ur,"{[a_1,a_2,a_3]}"]\arrow[r,"* \times 1"]
& S^1 \times \CP^5 \arrow[u,"a"]\arrow[r, "{(x\iota_1 t^3, y \iota_1 t^4,a)}" above]\arrow[ur, phantom,"\dagger" {near end, above}]\arrow[ul,phantom,"\star" near start]
& K(\Z/3,7)\times K(\Z/3,9) \times P_9, \arrow[ul,bend right=10,"m" above]\arrow[u,"m^*U" right]
\end{tikzcd}
\end{equation}
where only the triangles $\dagger$ and $\star$ commute. In the above:
\begin{enumerate}
\item $a\:S^1 \times \CP^3 \to \tau_5BU{\hspace{-.6pt}}(2)$ restricts to $[a_1,a_2,a_3]$ on $* \times \CP^3$.
\item $x$,$y \in \Z/3$ are arbitrary coefficients of the classes $\iota_1t^3$ and $\iota_1 t^4$, which are the natural generators of
\begin{align*} \HF{3}{7}(S^1 \times \CP^5) &\simeq \HF{3}{1}(S^1 \otimes \HF{3}{6}(\CP^5) \\
&\simeq \Z/3 \{ \iota_1\} \otimes \Z/3 \{ t^3\}\end{align*}
and
\begin{align*} \HF{3}{9}(S^1 \times \CP^5) &\simeq \HF{3}{1}(S^1) \otimes \HF{3}{8}(\CP^5) \\
&\simeq \Z/3 \{ \iota_1\} \otimes \Z/3 \{ t^4\}.\end{align*}
\end{enumerate}
Given Claim~\ref{claim:transitive_KZ379}, to compute $\op{Im}(U\circ -)$, it suffices to compute $m^*U\circ(x\iota_1t^3,y\iota_1t^4,a)$
as $(x,y)$ ranges over $\Z/3 \times \Z/3$. We will obtain formula for the difference \begin{equation}\label{eq:difference} m^*U\circ (x\iota_1t^3,y\iota_1t^5,a)-m^*U\circ
(0,0,a).\end{equation} Showing that $\op{Im}(U\circ -)$ is $\Z/3$ is equivalent to finding $x,y\in \Z/3$ so that the difference in Equation~\eqref{eq:difference} is nonzero.

\begin{claim}\label{claim:m_upper_U} The class $m^*U \in \HF{3}{11}\big(K(\Z/3,7) \times K(\Z/3,9)\times P_9\big)$ is given by
$$m^*U = U +P^1(\iota_7') -\iota_2\iota_9' +\iota_2^2\iota_7'-\iota_4\iota_7' \in \HF{3}{11}\big(K(\Z/3,7) \times K(\Z/3,9)\times P_9\big),$$ where $\iota_7'$ and $\iota_9'$ generate $\HF{3}{7}
\big(K(\Z/3,7)
\big)$ and $\HF{3}{9}\big(K(\Z/3,9)\big)$, respectively; and where $\iota_2, \iota_4$ are the images in $\HF{3}{*}(P_9)$ of generators in $\HF{3}{i}(K(\Z,2))$ and $\HF{3}{i}(K(\Z,4))$, respectively.
\end{claim}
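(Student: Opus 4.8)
The plan is to treat $m^*$ as the coaction of a principal fibration and to reduce the identity, via Künneth and the low-degree cohomology of the two factors, to the determination of four $\Z/3$-coefficients, which I then pin down using the fibration structure. First I would record the structural properties of the ring map
$$m^*\colon \HF{3}{*}(P_9) \to \HF{3}{*}(K(\Z/3,7)\times K(\Z/3,9))\otimes \HF{3}{*}(P_9).$$
It commutes with Steenrod operations; its restriction to $\{*\}\times P_9$ is the identity, so $m^*U = 1\otimes U + (\text{terms of positive fibre degree})$; and since $m$ is fibrewise over the base $B = K(\Z,2)\times K(\Z,4)\times K(\Z,6)$ of the fibration \eqref{eq:princ_fib_nice} (with projection $\pi\colon P_9\to B$, so $\pi m = \pi\,\mathrm{pr}_2$), it fixes the pulled-back classes $\iota_2,\iota_4,\iota_6$. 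In particular these base classes are primitive for the coaction.

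Next I would impose the Künneth constraint. Writing $F = K(\Z/3,7)\times K(\Z/3,9)$, one has $\HF{3}{t}(F)=0$ for $0<t<7$, with $\HF{3}{7}(F)=\langle\iota_7'\rangle$, $\HF{3}{9}(F)=\langle\iota_9'\rangle$, $\HF{3}{11}(F)=\langle P^1\iota_7'\rangle$, and the degree-$8$ and degree-$10$ pieces spanned by Bocksteins. In degrees $\leq 6$ the cohomology of $P_9$ agrees with $\HF{3}{*}(BU(3))=\Z/3[\iota_2,\iota_4,\iota_6]$, so $\HF{3}{2}(P_9)=\langle\iota_2\rangle$, $\HF{3}{4}(P_9)=\langle\iota_2^2,\iota_4\rangle$, and $\HF{3}{1}(P_9)=\HF{3}{3}(P_9)=0$. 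Matching each fibre degree $t$ against $\HF{3}{11-t}(P_9)$ annihilates the $t=8,10$ contributions and leaves
$$m^*U = 1\otimes U + \iota_7'\otimes(A\iota_2^2 + B\iota_4) + \iota_9'\otimes C\iota_2 + D\,P^1\iota_7'\otimes 1$$
for unknowns $A,B,C,D\in\Z/3$; it remains to show $A=1$, $B=-1$, $C=-1$, $D=1$.

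The coefficient $D$ is the fibre restriction: restricting to $F\times\{*\}$ gives $\iota^*U = D\,P^1\iota_7'$, where $\iota\colon F\hookrightarrow P_9$. I would identify $\iota^*U$ with the $k$-invariant of the pullback of $K(\Z/3,10)\to P_{10}\to P_9$ along $\iota$, which records how $\pi_{10}$ is attached to $\pi_7$; $3$-locally this attachment is governed by the element $\alpha_1\in\pi_3^s$ detected by $P^1$, and the degree-$9$ generator contributes nothing since there is no operation $\HF{3}{9}\to\HF{3}{11}$. Hence $\iota^*U$ generates $\HF{3}{11}(F)=\langle P^1\iota_7'\rangle$, and after normalising generators $D=1$.

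The main obstacle is the genuine cross terms $A,B,C$. Because every class in $\HF{3}{<11}(P_9)$ is a primitive base class, these coefficients cannot be generated from lower degrees by the ring and Steenrod structure, nor are they constrained by coassociativity of the coaction (that holds for all values of $A,B,C,D$); they must be extracted from the fibration itself. I would compute them from the Serre spectral sequence of \eqref{eq:princ_fib_nice}, in which $\iota_7',\iota_9'$ transgress to the mod-$3$ $k$-invariants $k_7,k_9$: the cross terms are exactly the lower-filtration components of $U$, governed by $k_7$, $k_9$, the Cartan formula, and the Kudo transgression carrying $P^1\iota_7'$ to $P^1k_7$. Equivalently — and this is likely the cleanest route to the exact values and signs — $U$ may be realised as a secondary operation built on the relations $\pi^*k_7=0$ and $\pi^*k_9=0$ in $\HF{3}{*}(P_9)$, with $A,B,C$ the primary error terms in its Cartan formula. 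As a sign check I would also evaluate the identity against explicit test maps into $F\times P_9$ assembled from $\CP^k$'s (detecting $\iota_2,\iota_4$) and mod-$3$ Moore spaces (detecting $\iota_7',\iota_9'$), each isolating one coefficient. The delicate bookkeeping needed to fix the signs $A=1$, $B=-1$, $C=-1$ is where the real work lies; assembling the four coefficients then gives the stated formula.
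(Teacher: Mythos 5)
Your setup is the same as the paper's: you treat $m^*$ as the coaction of the principal fibration \eqref{eq:princ_fib_nice}, use primitivity of the base classes and the K\"unneth/degree constraints to reduce the statement to four coefficients in $\Z/3$, and correctly identify that the cross terms are the lower-filtration components of a cocycle representative of $U$ in the Serre spectral sequence, to be extracted via Kudo transgression. The problem is that you stop exactly where the claim's content begins. The values $A=1$, $B=-1$, $C=-1$ are not a ``sign check'' to be deferred: they are what the claim asserts, and they feed directly into the criterion $ya_1-xa_1^2+xa_2\equiv 0 \pmod 3$ that decides whether there are one or three bundles with given Chern data. A proof that says ``the delicate bookkeeping needed to fix the signs is where the real work lies'' and then does not do that work has not proved the claim. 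Moreover, two of the three routes you offer for pinning the coefficients down are not developed enough to be checkable (the secondary-operation formulation would itself require identifying its Cartan-formula error terms, which is the same computation in different clothing; the test-map scheme needs explicit maps that you do not construct).

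To close the gap along the route you already have in hand, the computation is short and you should carry it out. Since $U$ is detected by $M_{11}=P^1\iota_7$ on the $E_\infty$-page, a cocycle representative in the double complex is $M_{11}$ plus lower-filtration corrections, and these are found by computing
$d_{11}(M_{11})=P^1(d_7\iota_7)=P^1\bigl(P^1\iota_4-\iota_2^2\iota_4-\iota_4^2+\iota_2\iota_6\bigr)$
via the Cartan formula, the Adem relation $P^1P^1=-P^2$, and the unstable relation $P^2\iota_4=\iota_4^3$, and then rewriting the result as
$\iota_4(d_7\iota_7)-\iota_2^2(d_7\iota_7)+\iota_2(d_9\iota_9)$.
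This exhibits $M_{11}-\iota_4\iota_7+\iota_2^2\iota_7-\iota_2\iota_9$ as a cocycle representing $U$, and applying the coaction $\iota_7\mapsto\iota_7+\iota_7'$, $\iota_9\mapsto\iota_9+\iota_9'$ (with $\iota_2,\iota_4$ fixed) reads off all four coefficients at once, including your $D=1$; you then do not need the separate $k$-invariant/$\alpha_1$ argument for the fibre restriction, which as written is only a plausibility argument rather than a proof that the normalization is consistent with the one used for $U$.
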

Given Claim~\ref{claim:m_upper_U}, since $\iota_2$ pulls back to $c_1 \pmod 3$ in $\HF{3}{*}(\CP^5)$ and $\iota_4$ pulls back to $c_2 \pmod 3$, we see that
\begin{align*} m^*U\circ (x\iota_1t^3,y\iota_1t^4,a)-U^*m\circ (0,0,a) &= U+ xP^1(\iota_1t^3)-( a_1t)y\iota_1t^4\\ & \,\,\,\,\,\,\,\,\,\,\,\,\,\,\,\, +( a_1^2t^2)x\iota_1t^3-(a_2t^2)x\iota_1t^3 -U\\
&= -(ya_1-xa_1^2+xa_2) \iota_1t^5.
\end{align*}
The quantity $ya_1-xa_1^2+xa_2$ is zero $\mod 3$ for all choices of $x$ and $y$ if and only if $a_1$ and $a_2$ are both zero $\mod 3$.

Predicated on Claims~\ref{claim:princ_fib_BU3}, \ref{claim:transitive_KZ379}, and \ref{claim:m_upper_U}, we have shown:

\begin{prop}\label{prop:classification_rank2_CP3_post} Consider the map 
\[ U\circ -\:  \pi_1({P_9}^{\CP^5})\to \pi_1(K(\Z/3,11)^{\CP^5})\] given by precomposing $U$ from in Diagram~\eqref{post1}.
Given integers $(a_1,a_2,a_3)$, we associate map $a_i\: \CP^5 \to K(Z,2i)$ that is well-defined up to homotopy. Suppose that $(k_7\times k_9)\circ \left( a_1,a_2,a_3\right)\simeq 0$, where $k_7 \times k_9$ is again as in Diagram~\eqref{post1}. The number of rank $3$ vector bundles on $\CP^5$ with $i$-th Chern class equal to $a_i$ is a torsor for $\op{coker}(U\circ -)$. Moreover, the following two situations can occur:
\begin{itemize}
\item If either $a_1$ or $a_2$ are nonzero $\mod 3$, then the map $U\circ -$
 is surjective and, up to homotopy, there is a unique $3$-complete vector bundle with $i$-th Chern class $a_i$.
\item If $a_1 \equiv 0 \pmod 3$ and $a_2 \equiv 0 \pmod 3$, then the map $U\circ -$ is zero and there are three distinct homotopy classes of $3$-complete vector bundles with $i$-th Chern class $a_i$.
\end{itemize}
\end{prop}

\subsecl{Proof of technical claims}{subsec:technical_claims_1}

We now prove the claims ~\ref{claim:princ_fib_BU3}, \ref{claim:transitive_KZ379}, and \ref{claim:m_upper_U}, completing the proof of Proposition~\ref{prop:classification_rank2_CP3_post}.
\begin{proof}[Proof of Claim~\ref{claim:princ_fib_BU3}]
The pullback of the Chern class map $$c:=(c_1,c_2,c_3)\:BU{\hspace{-.6pt}}(3) \to K(\Z,2) \times K(\Z,4) \times K(\Z,6)$$ on $\mod 3$-cohomology is a $3$-complete equivalence through degree 6.
We correct the degree $8$ and degree $10$ cohomology terms simultaneously via a map
\begin{center}\begin{tikzcd}[column sep=3em]K(\Z,2) \times K(\Z,4)\times K(\Z,6) \arrow[r,"{k_7 \times k_9}"] & K(\Z/3, 8) \times K(\Z/3, 10),
\end{tikzcd}\end{center}
and a factorization
\begin{center}\begin{tikzcd}
& P_9\arrow[d,dashed] \\
BU{\hspace{-.6pt}}(3) \arrow[ru, dashed,"{\tau_9}"]\arrow[r, "c"] & K(\Z,2) \times K(\Z,4)\times K(\Z,6) \arrow[r,dashed,"{k_7 \times k_9}"] & K(\Z/3\Z, 8) \times K(\Z/3\Z,10),
\end{tikzcd}\end{center}
where $P_9 := \operatorname{hofib}(k_7 \times k_9)$, such that:
\begin{enumerate}
\item The map $(k_7\times k_9) \circ c$ is nullhomotopic; and
\item The lift $\tau_9\: BU{\hspace{-.6pt}}(3) \to P_9$ is $\mod 3$-cohomology isomorphism up to at least degree $10$ and therefore realizes the $9$-truncation of $BU{\hspace{-.6pt}}(3)$, up to $3$-completion.
\end{enumerate}
We complete (1) in Construction~\ref{const1} and (2) in Verification~\ref{verif1}.
\begin{const}\label{const1}\label{const2}
Let $P^i$ denote the $\mod 3$ Steenrod operation of degree $4i$. The first relation among Steenrod operations on Chern classes is $P^1$ on $c_2$: $P^1(c_2) = c_1^2 c_2 + c_2^2 - c_1c_3.$
Let $\iota_j$ denote a generator for $\HF{3}{j}(K(\Z,j))$ and take
$$k_7 := P^1 \iota_4 - \iota_2^2 \iota_4 - \iota_4^2 + \iota_2\iota_6 \in \HF{3}{8}\Big(K(\Z,2) \times K(\Z,4) \times K(\Z,6)\Z\Big).$$
We identify a candidate for $k_9$ by computing $P^1$ on $c_3$.
$P^1c_3= c_3(c_1^2+c_2)$ so let $$k_9 := P^1\iota_6-\iota_6\big(\iota_2^2+\iota_4\big)= P^1\iota_6-\iota_6\iota_2^2-\iota_6\iota_4 \in \HF{3}{10}\Big(K(\Z,2) \times K(\Z,4) \times K(\Z,6)\Z\Big).$$
\end{const}
\begin{verif}\label{verif1}
One computes the $\HZt$-cohomology of integral Eilenberg Mac Lane spaces, which can be computed directly from the path-loop fibration.
The main result we need is the following:
\begin{prop}\label{K2K4K6_cohomology} Let $\iota_j$ generate $\HF{3}{j}K(\Z,j)$ for $j=2,4,6.$ We can identify the multiplicative structure of $\HF{3}{*}\left(K(\Z,2) \times K(\Z,4) \times K(\Z,6)\right)$ through degree $11$ as follows:
$$\left(\HF{3}{*}\big(K(\Z,2) \times K(\Z,4) \times K(\Z,6)\big)\right)_{\leq 11} \simeq \left(\Z/3\Z[\iota_2, \iota_4,\iota_6, Y_8, W_{10}] \otimes \Lambda[N_9, S_{11}]\right)_{\leq 11}$$
where the subscript indicates the degree of the polynomial or exterior generator, the notation $(-)_{\leq 11}$ indicates that we quotient by all elements of degree at least $12$, and
\begin{align*}
Y_8&= P^1 \iota_4 & W_{10} &= P^1 \iota_6 \\
N_{9} &= \beta P^1 \iota_4 & S_{11} &= \beta P^1 \iota_6.
\end{align*}
\end{prop}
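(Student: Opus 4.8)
The plan is to reduce the computation to the three factors by the Künneth theorem and to compute $\HF{3}{*}(K(\Z,n))$ for $n=2,4,6$ in the range $*\le 11$ by the classical inductive method of Cartan and Serre. Concretely, I would run the Serre spectral sequence of the path-loop fibration $K(\Z,n-1)\to PK(\Z,n)\to K(\Z,n)$ with contractible total space, anchoring the induction at the elementary computation $\HF{3}{*}(S^1)=\Lambda[\iota_1]$ and bootstrapping up through $n=6$. Because the total space is contractible, the $E_\infty$-page is concentrated in bidegree $(0,0)$, and this constraint is exactly what determines the base cohomology in terms of the fiber cohomology.

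The engine is transgression together with Kudo's transgression theorem. The fundamental class $\iota_{n-1}$ of the fiber transgresses to the fundamental class $\iota_n$ of the base, and Kudo's theorem guarantees that transgression commutes with the Steenrod powers $P^i$ and with the Bockstein $\beta$; hence, whenever $P^I\iota_{n-1}$ is a transgressive generator of the fiber, its transgression is $P^I\iota_n$. Borel's theorem then identifies the base cohomology, in the relevant range, as the free graded-commutative algebra on these transgressed generators. Two instability rules govern which operations yield genuinely new generators: $P^i x=0$ when $2i>\deg x$, and $P^i x = x^3$ when $2i=\deg x$, so that only admissible monomials $P^I$ of excess strictly less than $n$ survive. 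Since each $\iota_n$ is the mod-$3$ reduction of an integral class, $\beta\iota_n=0$, and no odd-degree generator arises directly from the fundamental class.

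Carrying this out degree by degree through $11$ produces exactly the asserted generators. For $n=2$ one has $K(\Z,2)\simeq\CP^\infty$, and $P^1\iota_2=\iota_2^3$ by the equality case $2\cdot 1=\deg\iota_2$, so $\HF{3}{*}(K(\Z,2))=\mathbb F_3[\iota_2]$ with no new generator. For $n=4$, since $2\cdot 1<4$ the class $P^1\iota_4=Y_8$ is a new polynomial generator in degree $8$ and $\beta P^1\iota_4=N_9$ is a new exterior generator in degree $9$, while $P^2\iota_4=\iota_4^3$ and every further admissible monomial lands above degree $11$. For $n=6$ one gets $P^1\iota_6=W_{10}$ in degree $10$ and $\beta P^1\iota_6=S_{11}$ in degree $11$, and all remaining admissible monomials of excess $<6$ have degree $\ge 14$. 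Assembling the three factors by Künneth and discarding everything in degree $\ge 12$ yields precisely $\big(\mathbb F_3[\iota_2,\iota_4,\iota_6,Y_8,W_{10}]\otimes\Lambda[N_9,S_{11}]\big)_{\le 11}$.

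The main labor is not the final enumeration but controlling the tower of spectral sequences: at each stage, and especially in the steps whose fibers are $K(\Z,3)$ and $K(\Z,5)$, one must check that no unexpected transgressive generators, hidden differentials, or multiplicative extensions appear in total degree $\le 11$, so that the collapse forced by contractibility is accounted for by exactly the transgressions named above. In this narrow range the verification is finite and essentially routine, but it is where care is genuinely required; with it in hand, the Künneth step and the comparison with the stated free graded-commutative algebra are immediate.
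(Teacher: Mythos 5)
Your proposal is correct and follows essentially the same route as the paper, which also obtains $\HF{3}{*}(K(\Z,2)\times K(\Z,4)\times K(\Z,6))$ from the path-loop fibrations via transgression (Kudo) and then Künneth; the paper simply states this without spelling out the degree-by-degree bookkeeping that you supply. Your enumeration of the generators $Y_8=P^1\iota_4$, $N_9=\beta P^1\iota_4$, $W_{10}=P^1\iota_6$, $S_{11}=\beta P^1\iota_6$, together with the instability rules showing nothing else appears below degree $12$, matches the statement exactly.
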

From the above, we can compute the Serre spectral sequence for the fibration $$K(\Z/3,7) \times K(\Z/3,9) \to P_9 \to K(\Z,2) \times K(\Z,4) \times K(\Z,6).$$
The $E_2$-page is given in Figure~\ref{ss1}. Moreover, if $\beta$ denotes the Bockstein power operation:
\begin{align*}
L_8&=\beta \iota_7
\\
R_{10}&= \beta \iota_9
\\ M_{11}&=P^1\iota_7.
\end{align*}
\begin{figure}[b]
\centering
\textbf{The $E_2$-page of a spectral sequence computing $\HF{3}{*}(P_9).$}\par\medskip
\begin{tikzpicture}
\matrix (m) [matrix of math nodes,
nodes in empty cells,nodes={minimum width=2.5ex,
minimum height=2.5ex,outer sep=-1pt},
column sep=.25ex,row sep=.25ex]{
11 & M_{11} & & & & & & & & & & && \\
10 & R_{10} & & & & & & & & & & && \\
9 & \iota_9 & & & & & & & & & & && \\
8 & L_8& & & & & & & & & & && \\
7 &\iota_7 & & & & & & & & & & && \\
6 & & & & & & & & & & & && \\
5 & & & & & & & & & & & && \\
4 & & & & & & & & & & & && \\
3 & & & & & & & & & & & && \\
2 & & & & & & & & & & & && \\
1 & & & & & & & & & & & && \\
0 & & & \iota_2 & & \iota_4 & &\iota_6 & & Y_8 & N_9 & W_{10} & S_{11} & \\
\quad\strut & 0 & 1 & 2 & 3 & 4 & 5 & 6 & 7 & 8 & 9 & 10 &11& \strut \\};
\draw[dashed] (m-1-2.north west) -- (m-13-14.south east);
\draw[thick] (m-1-1.east) -- (m-13-1.east) ;
\draw[thick] (m-13-1.north) -- (m-13-14.north) ;\end{tikzpicture}\caption{Only multiplicative generators for the $E_2$-page are indicated.}\label{ss1}\end{figure}
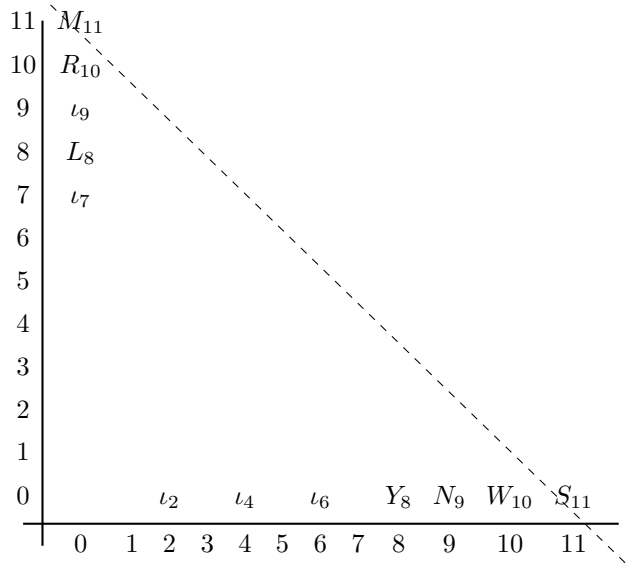
To obtain the associated graded of $\HF{3}{*}(P_9)$, we compute all relevant differentials using a combination of the following two facts (Kudo's transgression theorem, see \cite{Kudo56} or \cite[Ch. 6]{McCleary}):
\begin{itemize}
\item Given a principal fibration $F \to E \to K(\Z/p\Z,n)$, the fundamental class $\iota_{n+1}$ is transgressive in the mod $p$ Serre spectral sequence for $\Omega K(\Z/ p \Z,n) \to F \to E.$
\item A power operation applied to a transgressive class is transgressive; transgressions commute with power operations.
\end{itemize}
From the above items and the fact that $L_8 = \beta(\iota_7)$, we deduce that
\begin{align*} d_7 (\iota_7) &= Y_8- \iota_2^2\iota_4 - \iota_4^2+\iota_6\iota_2,\,\,\text{and}\\
d_8(L_8) &= \beta(d_7(\iota_7)) = \beta( Y_8- \iota_2^2\iota_4 - \iota_4^2+\iota_6\iota_2) = \beta(Y_8) = N_{9}.\end{align*}
Similarly, since $\beta(\iota_9)=R_{10}$, we get that
\begin{align*}d_{9}(\iota_9)&=W_{10}-\iota_6(\iota_2^2 -2\iota_4)\,\,\text{ and} \\
d_{10}(R_{10})&=\beta\big(W_{10}-\iota_6(\iota_2^2 +\iota_4)\big)= S_{11}.\end{align*}
All other terms strictly below the dotted line in Figure~\ref{ss1} are computed using the Leibniz rule. Thus the images of $\iota_2$, $\iota_4$, and $\iota_6$ are polynomial generators for $\HF{3}{*}(P_9)$ up to degree $10$; since
$c^*\: \HF{3}{2j} (K(\Z,2) \times K(\Z,4) \times K(\Z,6)) \to \HF{3}{*}(BU{\hspace{-.6pt}}(3)\Z)$ satisfies $\iota_{2j} \mapsto c_j,$
this shows that a lift of $(c_1,c_2,c_3)$ induces an equivalence through degree $9$, completing Verification~\ref{verif1}.
\end{verif}
Evidently, the next stage in the tower is given by a class $U\:P_9 \to K(\Z/3,11)$.
\end{proof}
\begin{proof}[Proof of Claim~\ref{claim:transitive_KZ379}] Consider the $\pi_1$ portion of the homotopy long exact sequence associated with the fibration \eqref{eq:princ_fib_nice}
\begin{equation}\label{keyLES2}
\begin{tikzcd}[column sep=.15in, row sep=.15in]
\pi_1\Big(\big(K(\Z/3, 7) \times K(\Z/3,9)\big)^{\CP^5}\Big) \arrow[d,"s"] \\
\pi_1(P_9^{\CP^5}) \arrow[d] \\
\pi_1\Big(\big(K(\Z,2) \times K(\Z,4) \times K(\Z,6)\big)^{\CP^5}\Big)
\end{tikzcd}
\end{equation}
where the basepoint for $\left(K(\Z,2) \times K(\Z,4) \times K(\Z,6)\right)^{\CP^5}$ is $(a_1, a_2,a_3)$.
The last term of \eqref{keyLES2} is zero, so $s$ is surjective. The action
\eqref{eq:action_KZ379} is given on $$(x,a)\in \pi_1\left(\big(K(\Z/3, 7) \times K(\Z/3,9)\big)^{\CP^5}\right)
\times \pi_1(P_9^{\CP^5})$$ by
\begin{equation}\label{eq:principal_action} (x,a)\,\, \mapsto \,\, s(x)a \in \pi_1(P_9^{\CP^5}).\end{equation}
Thus, the surjectivity of $s$ implies the action is transitive.
\end{proof}

\begin{proof}[Proof of Claim~\ref{claim:m_upper_U}]
To understand $U$ more explicitly, we study the spectral sequence in Figure~\ref{ss1} up to and including the dotted line. Computing differentials, we see that the class $M_{11} = P^1(\iota_7)$
detects a nonzero class in $\HF{3}{11}(P_9)$.

The action that \eqref{eq:principal_action} gives rise to a map of spectral sequences, from the Serre spectral sequence for
$$ K(\Z/3,7) \times K(\Z/3,9)\to P_9 \to K(\Z,2) \times K(\Z,4) \times K(\Z,6) $$
to the Serre spectral sequence for
\begin{equation}\label{eq:Serre1}\Big(K(\Z/3,7) \times K(\Z/3,9)\Big)^{\times 2}\to K(\Z/3,7) \times K(\Z/3,9) \times P_9 \to \prod_{i=1}^3 K(\Z,2i).\end{equation}
We compute this map of spectral sequences using the fiber-by-fiber action.

For $i=7$ and $i=9$, let $\iota_i$ and $\iota_i'$ generate the two copies of $\HF{3}{i}(K(\Z/3,i))$ in the fiber of Equation~\ref{eq:Serre1}. The comultiplication on the fiber implies that the coaction on the $E_2$-page is:
\begin{align*}\iota_7 &\mapsto \iota_7+\iota_7',& \iota_9 &\mapsto \iota_9 + \iota_9'.\end{align*}
We claim that, in the double complex of the source sequence, $U$ should be represented by
\begin{equation}\label{eq:U_really}M_{11} -\iota_4\iota_7+\iota_2^2\iota_7 -\iota_2\iota_9.\end{equation}
To see this, note that $M_{11}$ is transgressive and
\begin{align*}d_{11}(M_{11})&= d_{11}(P^1\iota_7)\\
&=P^1(d_7(\iota_7))\\
&=P^1(P^1\iota_4-\iota_2^2\iota_4 - \iota_4^2 + \iota_2\iota_6) \\
&= -P^2\iota_4+\iota_2^4\iota_4-\iota_2^2P^1\iota_4 +
\iota_4P^1\iota_4 + \iota_2^3\iota_6 +\iota_2P^1\iota_6 \\
&= -\iota_4^3+\iota_2^4\iota_4-\iota_2^2P^1\iota_4 +
\iota_4P^1\iota_4 + \iota_2^3\iota_6 +\iota_2P^1\iota_6 \\
&= \left(\iota_4(d_7\iota_7) +\iota_2^2\iota_4^2-\iota_2\iota_4\iota_6\right)+\iota_2^4\iota_4 -\iota_2^2P^1\iota_4+\iota_2^3\iota_6+\iota_2P^1\iota_6\\
&=\left( \iota_4(d_7\iota_7)-\iota_2^2d_7(\iota_7) +\iota_2^3\iota_6 \right)-\iota_2\iota_4\iota_6+\iota_2^3\iota_6+\iota_2P^1\iota_6\\
&= \iota_4(d_7\iota_7)-\iota_2^2d_7(\iota_7)-\iota_2\iota_4\iota_6-\iota_2^3\iota_6+\iota_2P^1\iota_6 \\
&= \iota_4(d_7\iota_7) -\iota_2^2(d_7\iota_7)+\iota_2(d_9\iota_9).\end{align*}
This indicates that a cocycle representative for $U$ in the double complex computing $H^*(P_9;\Z/2)$ is Equation~\eqref{eq:U_really} on the $E_2$-page.

Therefore, on the $E_2$-page, we see that the action on the class representing $U$ is detected by
\begin{align*}(M_{11} - \iota_4\iota_7+ \iota_2^2\iota_7 - \iota_2\iota_9 ) \xmapsto{m^*} (M_{11}- \iota_4\iota_7+\iota_2^2\iota_7 - \iota_2\iota_9 +P^1 \iota_7'-\iota_4\iota_7'+\iota_2^2\iota_7' -\iota_2\iota_9').\end{align*}
Passing to the $E_\infty$-page of the Serre spectral sequence for Equation~\eqref{eq:Serre1} we see that $m^*U$ is detected by \begin{align*}M_{11} + P^1 \iota_7'-\iota_4\iota_7' +\iota_2^2\iota_7'
-\iota_2\iota_9'\in \HF{3}{*}\big(K(\Z/3,7) \times K(\Z/3,9)\times P_9\big),\end{align*}
and therefore $m^*U=U + P^1 \iota_7'
-\iota_4\iota_7'+\iota_2^2\iota_7'-\iota_2\iota_9',$
completing the proof of the claim.
\end{proof}

\begin{rmk}\label{rmk:top_cell} Instead of analyzing the tower of Diagram~\ref{post1}, we could transpose over the skeleton-truncation adjunction in the homotopy, and instead build a map by working up the skeleton of $\CP^5$. The action of $\op{coker}(U\circ -)$ corresponds to the action of a quotient of $\pi_{10}\c{BU{\hspace{-.6pt}}(3)}{3}$ on lifts of a given map $\sk^9\CP^5 \to \c{BU{\hspace{-.6pt}}(3)}{3}$ to the $10$-skeleton. This shows that the action from Construction~\ref{const:action_Z3} is the relevant one.
\end{rmk}

\subsecl{$2$-complete rank $3$ vector bundles on $\CP^5$}{subsec:2local}

In this section, we show that any $2$-complete bundles on $\CP^5$ with the same $2$-complete Chern classes are isomorphic. More precisely:
\begin{prop}\label{prop:2local}Consider the map $c\: \c{BU{\hspace{-.6pt}}(3)}{2} \to K(\c{\Z}{2},2)\times K(\c{\Z}{2},4)\times K(\c{\Z}{2},6)$ given by the product $c_1\times c_2\times c_3$ of two-completed Chern classes. The induced $2$-complete Chern class maps  \[[\CP^5,\c{BU{\hspace{-.6pt}}(3)}{2}] \to H^2(\CP^5,\c{\Z}{2})\times H^4(\CP^5,\c{\Z}{2}) \times H^6(\CP^5,\c{\Z}{2})\] and

 \[[\CP^4,\c{BU{\hspace{-.6pt}}(3)}{2}]\to H^2(\CP^4,\c{\Z}{2})\times H^4(\CP^4,\c{\Z}{2}) \times H^6(\CP^4,\c{\Z}{2})\]
are injective.
\end{prop}
\begin{proof}First consider the statement for $\CP^5$. To understand $[\CP^5,\c{BU{\hspace{-.6pt}}(3)}{2}]$,
we build a map from $\CP^5$ into $\c{BU{\hspace{-.6pt}}(3)}{2}$ cell-by-cell. First,
recall the $2$-complete homotopy of $BU{\hspace{-.6pt}}(3)$, as in Figure~\ref{fig:htpy_BU3_2}, computed from Figure~\ref{fig:homotopy_BU3}.
\begin{figure}[h]
\begin{tabular}{| M{1.2cm} | M{1cm} | M{1cm} | M{1cm} | M{1cm} |M{1cm} | M{1cm} | M{1cm} | M{1cm} | M{1cm} | M{1cm} | N}
\hline
& \textbf{$\pi_2$} & \textbf{$\pi_3$} & \textbf{$\pi_4$} & \textbf{$\pi_5$} & \textbf{$\pi_6$} & \textbf{$\pi_7$} & \textbf{$\pi_8$} & \textbf{$\pi_9$} & \textbf{$\pi_{10}$} \\
\hline
& & & & & & & & &
\\[-8pt]
$\c{BU{\hspace{-.6pt}}(3)}{2}$ & $\c{\Z}{2}$ &0 & $\c{\Z}{2}$ & 0 & $\c{\Z}{2}$ & $\Z/2$ &0 & $\Z/4$ & $0$ \\[2pt]
\hline
\end {tabular}\caption{$2$-complete homotopy of BU{\hspace{-.6pt}}(3)}\label{fig:htpy_BU3_2}
\end{figure}
Consider the dotted arrows $(i)$ to $(v)$ in Diagram~\eqref{eq:skeletal} below, where $\sk^{2i}\CP^5\simeq \CP^i$ denotes the $i$-skeleton of $\CP^5$ for the standard cellular construction of $\CP^n$. Specifically, we recursively build $\CP^{n+1}$ as the cofiber of the Hopf map $S^{2n+1}\to \CP^n$.
\begin{equation}\label{eq:skeletal}
\begin{tikzcd}[row sep=.4cm]
\CP^5\arrow[from=d]\arrow[ddrr,bend left=32,dashed,"(v)" above]
& \\
\sk^8\CP^5\arrow[from=d]\arrow[drr,bend left=16,dashed,"(iv)" above]
& \\
\sk^6\CP^5\arrow[from=d]\arrow[rr,dashed,"(iii)"]
& &\c{BU{\hspace{-.6pt}}(3)}{2} \\
\sk^4\CP^5 \arrow[from=d]\arrow[urr,dashed,bend right=16,"(ii)" above]
&\\
\sk^2\CP^5\arrow[from=d]\arrow[uurr,bend right=32,dashed,"(i)" above]
&\\
*
\end{tikzcd}
\end{equation}
An arrow $(i)$ corresponds to a 2-complete first Chern class $\CP^2 \to K(\c{\Z}{2},2)$. The obstruction to lifting further is in $\pi_3(\c{BU{\hspace{-.6pt}}(3)}{2})=0$.
The choices of lifts to an arrow $(ii)$ are acted on transitively by
$\pi_3(\c{BU{\hspace{-.6pt}}(3)}{2}) \simeq \c{{\Z}}{2}$ and correspond to
$c_2$. The obstructions to lifting to an arrow $(iii)$ lie in $\pi_5(\c{BU{\hspace{-.6pt}}(3)}{2})=0$, and the choices of lift to $(iii)$ correspond to $c_3$.

The obstruction to a lift to a map $(iv)$ lies in $\pi_7(\c{BU{\hspace{-.6pt}}(3)}{2}) \simeq \Z/2.$
The choices of lifts are acted on transitively by $\pi_8(\c{BU{\hspace{-.6pt}}(3)}{2})\simeq 0.$ The obstruction to lifting from $(iv)$ to $(v)$ are in $\pi_9(\c{BU{\hspace{-.6pt}}(3)}{2})\simeq \Z/4$. The choices of lift are acted upon transitively by $\pi_{10}(\c{BU{\hspace{-.6pt}}(3)}{2})=0.$

Since $\sk^8 \CP^5 =\CP^4$, the above argument also shows that $2$-local rank $3$ vector bundles on $\CP^4$ are determined by their Chern classes.
\end{proof}
\begin{rmk} We have shown that there are $\mod 2$ and $\mod 4$ conditions on the Chern classes of a rank $3$ vector bundle on $\CP^5$, but no new $2$-primary invariants. However, for a general $10$-skeletal space (one that is not even), there may be additional 2-complete bundles not determined by Chern classes.
\end{rmk}
Combining Lemma~\ref{cor:fracture}, Corollary~\ref{cor:rk3p4}, and Proposition~\ref{prop:2local} we obtain the following:
\begin{cor}\label{cor:rk3p4_final} A rank complex $3$ bundle on $\CP^4$ is determined by its Chern classes.
\end{cor}

\subsecl{The Schwarzenberger conditions}{subsec:schwarzenberger}

Finally, we discuss necessary conditions for a collection of integers $a_1,a_2, a_3\in \Z$ to be the Chern classes of a topological vector bundle of rank $3$ on $\CP^5$.
Following \cite[Theorem 1]{Switzer}, let integers $c_1,\ldots, c_n\in \mathbb Z$ be integers. Then there exist complex numbers $\delta_1,\ldots, \delta_n \in \mathbb C$ such that 

\begin{equation}\label{chern_roots}y^n+c_1y^{n-1}+\cdots + c_{n-1}y + c_n = \prod_{j=1}^n(y+\delta_j).\end{equation}
The Schwarzenberger condition is the requirement
\begin{equation}\label{schwarz} 
S_n\, \, : \forall r \in \Z \text{ such that }2 \leq r \leq n, \,\, \sum_{j=1}^n{\delta_j \choose r}  \in \mathbb Z \,.
\end{equation}
\begin{thm}[{\cite[Theorem A]{Thomas}, \cite[Theorem 1]{Switzer}}]\label{lem:Thomas_thmA} Integers $c_1,\ldots , c_k \in \Z$ are the Chern classes of a rank $k$ vector bundle on $\CP^k$ if and only if $c_1,\ldots, c_k$ satisfy the condition $S_k$.
\end{thm}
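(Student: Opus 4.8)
The plan is to translate the realizability question into complex $K$-theory, where the Schwarzenberger conditions become transparent integrality conditions on the Chern character. Write $t$ for the generator of $H^2(\CP^k;\Z)$, so that a prescribed tuple $(c_1,\dots,c_k)$ is the same datum as a class $1 + c_1 t + \cdots + c_k t^k$ in $H^*(\CP^k;\Z)$. Recall that $\widetilde K^0(\CP^k)$ is free abelian on $H, H^2,\dots,H^k$, where $H = [\mathcal O(1)] - 1$, and that the Chern character $\mathrm{ch}\colon \widetilde K^0(\CP^k) \to \widetilde H^{*}(\CP^k;\Q) = \big(t\,\Q[t]\big)/(t^{k+1})$ is injective with image the lattice $L$ spanned by $(e^t-1),\dots,(e^t-1)^k$, since $\mathrm{ch}(H) = e^t - 1$. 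My first step is therefore to prove that a tuple $(c_1,\dots,c_k)$ is realized by an honest complex bundle (of any rank) if and only if the reduced Chern character it determines, namely $\sum_{n\ge 1}(p_n/n!)$ with $p_n \in H^{2n}(\CP^k)\cong\Z\cdot t^n$ the $n$-th Newton power sum in the Chern roots (an explicit integral polynomial in $c_1,\dots,c_n$), lies in $L$.

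Necessity is then immediate: a genuine rank-$k$ bundle $V$ determines a class $[V]\in K^0(\CP^k)$ whose reduced part lies in $L$, so its Chern character is integral in the required sense. The content of the theorem is that membership in $L$ is exactly the stated condition $f_n \equiv 0 \pmod{n!}$. For this I would carry out the change of basis from the monomial basis $\{t^n\}$ to the lattice basis $\{(e^t-1)^j\}$ explicitly: substituting $t = \log(1+u)$ with $u = e^t - 1$ and expanding $\sum_n (p_n/n!)\,(\log(1+u))^n$ in powers of $u$, the coefficient of $u^N/N!$ is a fixed integral combination of $p_1,\dots,p_N$ whose coefficients are signed Stirling numbers of the first kind. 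One then checks by induction on $N$ that this combination is precisely the quantity $f_N(s_1,\dots,s_N)$ built from the recursions in the statement (the $s_n$ being the chosen integral repackaging of the power sums $p_n$). Consequently, integrality of the $u$-expansion—equivalently, membership in $L$—is equivalent to $f_N \equiv 0 \pmod{N!}$ for all $N\le k$. This is the step that unwinds the auxiliary definitions of $s_n$ and $f_n$.

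For sufficiency, suppose $(c_1,\dots,c_k)$ satisfies $S_k$. By the previous paragraph the associated reduced Chern character lies in $L$, so there is a class $\xi \in K^0(\CP^k)$ of virtual rank $k$ whose Chern classes are exactly $c_1,\dots,c_k$ (its Chern classes are a priori integral and agree rationally with the prescribed ones, hence agree on the nose). It remains to realize $\xi$ by an actual bundle of rank exactly $k$. Stabilizing, $\xi \oplus \underline{\C}^{N}$ is represented by an honest bundle $E$ of some rank $r \ge k$ with the same Chern classes. Since $\CP^k$ has real dimension $2k$, any complex bundle of rank $r > k$ admits a nowhere-zero section—the obstructions lie in $H^{i}(\CP^k;\pi_{i-1}S^{2r-1})$ with $i \ge 2r > 2k$, and hence vanish—so $E \cong E' \oplus \underline{\C}$ with $c(E') = c(E)$. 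Iterating splits $E$ down to a rank-$k$ bundle with Chern classes $c_1,\dots,c_k$, as desired.

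The main obstacle is the combinatorial identification in the second paragraph: verifying that the recursively defined $f_n$ are exactly the Stirling-number combinations produced by the logarithmic substitution requires careful bookkeeping, and it is here that the precise shape of the Schwarzenberger condition—including the denominators $n!$—must be matched against $K$-theoretic integrality. By contrast, the $K$-theory reformulation and the stable-range splitting are standard once set up; the splitting step is the boundary case $r=k$ of the general fact that bundles of rank exceeding the middle dimension destabilize, which is precisely why $\CP^k$ with rank $k$ sits at the edge of the stable range and the existence question is governed entirely by stable—i.e. $K$-theoretic—data.
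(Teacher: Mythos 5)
Your proof is correct, but note that the paper does not prove this statement at all: it is quoted verbatim as Theorem~A of the cited reference \cite{Thomas}, and the surrounding text only evaluates the condition $S_5$ explicitly and combines it with a stable-range splitting result of Zabrodsky. So there is no in-paper argument to compare against; what you have written is essentially the standard (and, as far as I can tell, the original) $K$-theoretic proof. Your two main steps are both sound. The stable part is fine: realizability by some honest bundle is equivalent to realizability by a virtual bundle because one can add trivial summands, and the destabilization from rank $r>k$ down to rank exactly $k$ works because the obstructions to a nowhere-zero section live in $H^{i}(\CP^k;\pi_{i-1}S^{2r-1})$, which vanishes for all $i$ once $2r>2k$; this is exactly the boundary case the paper itself invokes (via Zabrodsky) in Lemma~\ref{lem:schwarz_enough}. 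The combinatorial step also checks out: with $u=e^t-1$ one has $(\log(1+u))^n/n!=\sum_{N\ge n}s(N,n)\,u^N/N!$, the signed Stirling numbers satisfy $s(N,n)=s(N-1,n-1)-(N-1)s(N-1,n)$, and an easy induction shows $f_N(s_1,\dots,s_N)=\sum_{n=1}^{N}s(N,n)\,s_n$; this matches the paper's explicit $f_2,\dots,f_5$ (e.g. $f_5=s_5-10s_4+35s_3-50s_2+24s_1$). One caveat you should be aware of: your argument requires $s_n$ to be the Newton power sums of the Chern roots, and the recursion for $s_k$ as printed in the paper omits the final term $(-1)^{k+1}k\,c_k$ of Newton's identity (its printed $s_2$ would be $c_1^2$ rather than $c_1^2-2c_2$); the paper's explicit evaluations $s_2(\vec a)=a_1^2-2a_2$, etc., confirm that the power-sum interpretation you use is the intended one.
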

From this, we deduce necessary and sufficient conditions for three integers to be the Chern classes of a rank $3$ bundle on $\CP^5$.
\begin{lem}\label{lem:schwarz_enough}Let $a_1,a_2,a_3 \in \Z$. Then there exists a complex rank $3$ topological vector bundle $V$ on $\CP^5$ with $c_i(V)=a_i$ if and only if the $5$-tuple $(a_1,a_2,a_3,0,0)$ satisfies the Schwarzenberger condition $S_5.$

\end{lem}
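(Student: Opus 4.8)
The plan is to deduce the statement from Thomas's theorem (Theorem~\ref{lem:Thomas_thmA}) by moving between rank $3$ and rank $5$ bundles on $\CP^5$ via stabilization and destabilization. Since a rank $3$ bundle automatically has $c_4=c_5=0$, the relevant instance of Theorem~\ref{lem:Thomas_thmA} (with $k=5$) is the realizability of the $5$-tuple $(a_1,a_2,a_3,0,0)$ by a rank $5$ bundle, which by definition is the condition $S_5$. For necessity, given a rank $3$ bundle $V$ with $c_i(V)=a_i$, the rank $5$ bundle $V\oplus\underline{\C}^2$ has total Chern class $1+a_1t+a_2t^2+a_3t^3$, hence Chern classes $(a_1,a_2,a_3,0,0)$; Theorem~\ref{lem:Thomas_thmA} then gives $S_5$. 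This direction is immediate.

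For sufficiency, suppose $(a_1,a_2,a_3,0,0)$ satisfies $S_5$. Theorem~\ref{lem:Thomas_thmA} produces a rank $5$ bundle $W$ on $\CP^5$ with these Chern classes, and I would destabilize $W$ to rank $3$ in two steps, each splitting off a trivial line bundle by producing a nowhere-zero section. Equivalently, I lift the classifying map along the sphere-bundle fibrations $BU(4)\to BU(5)$, with fiber $U(5)/U(4)\simeq S^9$, and then $BU(3)\to BU(4)$, with fiber $U(4)/U(3)\simeq S^7$. In the first step the obstructions to a section of $W$ lie in $H^{i+1}(\CP^5;\pi_i S^9)$; as $S^9$ is $8$-connected the only obstruction is the Euler class in $H^{10}(\CP^5;\pi_9 S^9)=H^{10}(\CP^5;\Z)$, which equals $c_5(W)=0$. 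Hence $W\cong W'\oplus\underline{\C}$ with $W'$ of rank $4$ and $c(W')=c(W)$, so in particular $c_4(W')=0$.

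The second step is where the difficulty concentrates. The obstructions to a section of $W'$ lie in $H^{i+1}(\CP^5;\pi_i S^7)$, with contributions $H^8(\CP^5;\pi_7 S^7)=H^8(\CP^5;\Z)$, equal to the Euler class $c_4(W')=0$; then $H^9(\CP^5;\pi_8 S^7)=H^9(\CP^5;\Z/2)=0$; and finally $H^{10}(\CP^5;\pi_9 S^7)=H^{10}(\CP^5;\Z/2)\cong\Z/2$. Thus everything reduces to showing that this single secondary obstruction $\theta\in\Z/2$ vanishes, which is the main obstacle of the argument.

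To handle it, I would use that the generator of $\pi_9 S^7\cong\Z/2$ is $\eta^2$, so that $\theta$ is computed by a secondary operation built from the Adem relation $Sq^2Sq^2=Sq^3Sq^1$. Its indeterminacy, arising from varying the section over $\sk^8\CP^5$ (where sections differ by a class in $H^8(\CP^5;\pi_8 S^7)=H^8(\CP^5;\Z/2)$), is the image of $Sq^2\colon H^8(\CP^5;\Z/2)\to H^{10}(\CP^5;\Z/2)$, which is zero since $Sq^2(t^4)=4t^5=0$. The value of $\theta$ itself then vanishes for the same structural reason: because $H^*(\CP^5;\Z/2)$ is generated by the even, integral class $t$, every class satisfies $Sq^1=0$, so $Sq^2Sq^2=Sq^3Sq^1=0$ on the relevant classes and the secondary operation detecting $\theta$ is forced to be trivial. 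Granting this vanishing, $W'\cong E\oplus\underline{\C}$ with $E$ of rank $3$ and $c(E)=c(W')=1+a_1t+a_2t^2+a_3t^3$, so $E$ is a rank $3$ bundle on $\CP^5$ with $c_i(E)=a_i$, completing the proof. I expect the careful identification of $\theta$ as this $\eta^2$-type secondary operation, together with the clean verification that both its indeterminacy and its value are killed by the even integral structure of $H^*(\CP^5;\Z/2)$, to be the technical heart of the argument.
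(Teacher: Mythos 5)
Your necessity argument is the same as the paper's and is fine, and for sufficiency you take a genuinely different route: the paper does no obstruction theory at all, but instead stabilizes \emph{further}, applying a quotable metastable-range result ([Zab, Proposition 5.7.5]) to the rank $7$ bundle $V'\oplus\underline{\C}^2$ (whose top four Chern classes vanish) to split off a rank $3$ summand in one stroke. You instead destabilize the rank $5$ bundle from Thomas's theorem one step at a time. Your first step (fiber $S^9$, sole obstruction $c_5=0$) and the bookkeeping for the second step (obstructions in $H^8(\CP^5;\Z)$, $H^9(\CP^5;\Z/2)=0$, and $H^{10}(\CP^5;\Z/2)\cong\Z/2$) are correct; the entire difficulty is concentrated, as you say, in the last group.

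The gap is your vanishing argument for $\theta$. A secondary operation associated to the Adem relation $Sq^2Sq^2+Sq^3Sq^1=0$ is \emph{defined} on classes annihilated by the relevant primary operations; the relation holding identically is what makes the operation well-defined, not a reason for it to vanish. (This particular secondary operation is exactly the one detecting $\eta^2$ in $\pi_*\sphere$ and is famously nontrivial.) So ``$Sq^1=0$ on $H^*(\CP^5;\Z/2)$, hence $Sq^2Sq^2=Sq^3Sq^1=0$ on the relevant classes, hence the operation is trivial'' does not follow. The degree count also shows $\theta$ cannot literally be a secondary operation applied to a class on $\CP^5$: a degree-$3$ operation landing in $H^{10}$ would have to act on $H^7(\CP^5)=0$. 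What $\theta$ actually is is the pullback of the second $k$-invariant of the Moore--Postnikov factorization of $BU(3)\to BU(4)$, evaluated on the chosen lift of $W'$ to the first stage; it is a functional/twisted operation depending on the bundle and the lift, and one must identify that $k$-invariant explicitly and evaluate it. There is moreover reason to doubt any universal vanishing statement: the long exact sequence of $S^7\to BU(3)\to BU(4)$ together with Figure~1 gives $\pi_{10}BU(4)\simeq\Z/2$, so rank $4$ bundles on $\CP^5$ with fixed Chern data and $c_4=0$ can come in pairs, and the top obstruction genuinely depends on the bundle rather than on its Chern classes alone. The paper's detour through rank $7$ exists precisely to stay in the range where a known splitting theorem applies and this secondary obstruction never has to be confronted; as written, your argument does not close it.
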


\begin{proof}By Theorem~\ref{lem:Thomas_thmA} above, 
the condition is necessary.
To show the condition $S_5$ is sufficient, we prove that a rank $5$ vector $V'$ bundle on $\CP^5$ with $c_4$ and $c_5$ equal to zero is
isomorphic to a bundle $V \oplus \underline {\mathbb C}^2$, i.e. its stable class has a rank $3$ representative. Consider \cite[Proposition 5.7.5]{Zab}, which
implies that any (complex) rank $7$ vector bundle on $\CP^5$ with top four Chern classes zero is a sum of a rank $3$ bundle and two trivial bundles. We apply this to $V'\oplus \underline {\mathbb C}^2$ to get the desired result.
\end{proof}

We compute $S_5$ explicitly.

\begin{lem}\label{lem:explicit_S5} The condition $S_5$ on $(a_1,a_2,a_3,0,0)$ is equivalent to the system of equations:
\begin{align*}
a_1a_2+a_3 &\equiv 0 \pmod{2}\\
a_2-a_1^2a_2+a_1a_3-a_2^2&\equiv 0 \pmod{3}\\
-a_2+a_1a_2+a_1^2a_3+a_2a_3 &\equiv 0 \pmod{3}\\
-a_1a_2+a_3-a_1^3a_2+a_1a_2^2+a_1^2a_3-a_2a_3& \equiv 0 \pmod{4}\\
\end{align*}
\end{lem}
This immediately implies the following:
\begin{cor}
Let $(a_1,a_2,a_3)\in \mathbb Z^3.$ Let $(a_1,a_2,a_3)\pmod n$ denotes the entry-wise reduction modulo $n$. There exists a rank $3$ bundle on $\CP^5$ with $i$-th Chern class equal to $a_i$ if and only if the residue of $(a_1,a_2,a_3)$ satisfies the following three conditions:
\begin{enumerate}
\item \,\,\,\, $(a_1,a_2,a_3) \pmod{2} \in 
\begin{Bmatrix}
(0, \star, 0),   &(\star, 0, 0), & (1,1,1)
\end{Bmatrix}.
$
\item[]
\item  $(a_1,a_2,a_3) \pmod{3} \in 
\begin{Bmatrix}
(0, 0, \star), & (\pm1,0,0)
\\\\
(0,1,1), & (1,-1,1)\end{Bmatrix}.
$\\
\item[]
\item  $(a_1,a_2,a_3) \pmod{4} \in 
\begin{Bmatrix}
(0, \star, 0),  & (1,0,2), & (2,\star,0),&  (-1,0,2)\\\\
 (0, 1, \star), & (1,1,1), &  (2,1,\star), &    (-1,1,-1)   \\\\
(0,-1,2), & (1,-1,-1) &    (2,-1,2),  & (-1,-1,1)   \\\\
 (\star,0,0) &(1,2,\star) &  (2,-1,0), & (-1,2,\star)
\end{Bmatrix}.
$
\item[]
\end{enumerate}
In the above, $\star$ means no constraint on the indicated entry.
\end{cor}

\begin{proof}[Proof of Lemma~\ref{lem:explicit_S5}]
We first expand the general formula in \eqref{schwarz}. 
Let $c_1,\ldots, c_n$ be given and let $\delta_j$ be as 
in \eqref{chern_roots}. Let $n \geq 1$ be a given integer and let 
$r \in \mathbb Z$ be such that $2 \leq r \leq n$.  

\begin{align*}
\sum_{j=1}^n {\delta_j \choose r} & =  \sum_{j=1}^n \frac{1}{r!} \prod_{k=0}^{r-1} (\delta_j-k).
\end{align*}
Let $[r-1]=\{0,\ldots, r-1\}$. Given a finite set $I$, 
let $\#I$ denote the cardinality of $I$. 
With this notation, we expand the right-hand side of the previous equation to obtain:
\begin{align*}
\sum_{j=1}^n {\delta_j \choose r}
&=  \sum_{j=1}^n \frac{1}{r!}\sum_{t=1}^r\sum_{\substack{ I\subset [r-1]\\ \# I=r-t} } \left( \prod_{k\in I}(-k)\right) \delta_j^t\\
\end{align*}
Let for $t\geq 0$ an integer, let $p_t=\sum_{j=1}^n \delta_j^t$.We may rewrite the previous equation as:
\begin{align}\label{eq:pt1}
\sum_{j=1}^n {\delta_j \choose r}
&=  \frac{1}{r!}\sum_{t=1}^r\sum_{\substack{ I\subset [r-1]\\ \# I=r-t} } \left( \prod_{k\in I}(-k)\right)p_t
\end{align}
 Note that, by definition of $\delta_j$, the elementary symmetric functions on $\delta_j$ are precisely the Chern classes $c_1,\ldots, c_n$, and thus the power sums $p_1,\ldots, p_r$ are polynomials in $c_1,\ldots, c_n$. 

Now we specialize to the case $n=5$, $c_i=a_i$ for $1\leq i \leq 3$, $c_4=c_5=0$. Using Newton's identities to express the power sums in terms of elementary symmetric functions, we find that:

\begin{equation}\label{sympoly}
\begin{matrix}
p_1&=& a_1\\
p_2&=&a_1^2-2a_2\\
p_3&=&a_1^3-3a_1a_2+3a_3\\
p_4&=& a_1^4-4a_1^2a_2+4a_1a_3+2a_2^2\\
p_5&=&a_1^5-5a_1^3a_2+5a_1a_2^2+5a_1^2a_3-5a_2a_3.
\end{matrix}
\end{equation}
\begin{itemize}
\item {\bf Case 1: $r=2.$} Substituting the results of Equations~\eqref{sympoly} in Equation~\eqref{eq:pt1} gives
\begin{align*}
\sum_{j=1}^5 {\delta_j \choose 2}
&=  \frac{1}{2!}\sum_{t=1}^2\sum_{\substack{ I\subset [1]\\ \# I=2-t} } \left( \prod_{k\in I}(-k)\right)p_t\\
&= \frac{1}{2}( -p_1+p_2)\\
&=\frac{1}{2}(-a_1+a_1^2-2a_2).
\end{align*}
Since $a_1 \equiv a_1^2\pmod 2$, this is always an integer.

\item {\bf Case 2: $r=3.$} Equation~\eqref{eq:pt1} becomes
\begin{align*}
\sum_{j=1}^5 {\delta_j \choose 3}
&=  \frac{1}{3!}\sum_{t=1}^3\sum_{\substack{ I\subset [2]\\ \# I=3-t} } \left( \prod_{k\in I}(-k)\right)p_t\\
&= \frac{1}{6}\left((-1)(-2)p_1+(-1-2)p_2+p_3 \right)\\
&= \frac{1}{6}(2p_1-3p_3+p_3).
\end{align*}
Substituting Equations~\eqref{sympoly} in the above, we obtain the condition
\begin{equation*}
2a_1-3(a_1^2-2a_2)+a_1^3-3a_1a_2+3a_3 \equiv 0 \pmod{6},
\end{equation*}
or equivalently:
\begin{equation*}
\begin{matrix}
a_1^2+a_1a_2+a_1^3+a_3 &\equiv 0 \pmod{2}\\
-a_1+a_1^3 & \equiv 0 \pmod{3}
\end{matrix}
\end{equation*}
The second equation gives no condition. The first equation simplifies to:
\begin{equation}\label{eq:case2}
\begin{matrix}
a_1a_2+a_3 &\equiv 0 \pmod{2}.
\end{matrix}
\end{equation}

\item {\bf Case 3: $r=4.$} Equation~\eqref{eq:pt1} becomes
\begin{align*}
\sum_{j=1}^5 {\delta_j \choose 4}
&=  \frac{1}{4!}\sum_{t=1}^4\sum_{\substack{ I\subset [3]\\ \# I=4-t} } \left( \prod_{k\in I}(-k)\right)p_t\\
&= \frac{1}{4!}\left( -6p_1+(2+3+6)p_2-(1+2+3)p_3+p_4\right) \\
&=\frac{1}{4!}\left(-6p_1+11p_2-6p_3+p_4\right).
\end{align*}
We substitute Equations~\eqref{sympoly} to obtain:
\begin{align*}
\sum_{j=1}^5 {\delta_j \choose 4}
&=\frac{1}{4!}(-6a_1+11(a_1^2-2a_2)\\ & \,\,\,\,\,\,\,-6(a_1^3-3a_1a_2+3a_3)+a_1^4-4a_1^2a_2+4a_1a_3+2a_2^2)\\
&= \frac{1}{4!}(-6a_1+11a_1^2-22a_2-6a_1^3-18a_1a_2\\ &\,\,\,\,\,\,\,+18a_3 +a_1^4-4a_1^2a_2+4a_1a_3+2a_2^2)\\
\end{align*}
This quantity is an integer if and only if
\begin{equation}\label{quantity34}-6a_1+11a_1^2-22a_2-6a_1^3+18a_1a_2-18a_3 +a_1^4-4a_1^2a_2+4a_1a_3+2a_2^2\end{equation} is zero modulo $4$ and $3$. Reducing modulo 4, we obtain:
\begin{align*}
2a_1-a_1^2+2a_2+2a_1^3+2a_1a_2+2a_3 +a_1^4+2a_2^2 \equiv 0 \pmod{4}.
\end{align*}
Note that $a_1^2 \equiv a_1^4 \pmod{4}$ for all integers $a_1$, so this simplifies to:
\begin{align*}
a_1+a_2+a_1^3+a_1a_2+a_3+a_2^2 \equiv 0 \pmod{2},
\end{align*}
which further simplifies to 
\begin{align*}
a_1a_2+a_3\equiv 0 \pmod{2}.
\end{align*}
This is already a condition, obtained in case 2, so we have no new condition.

Reducing \eqref{quantity34} modulo 3, we obtain:

\begin{align*}
-a_1^2+a_2+a_1^4-a_1^2a_2+a_1a_3-a_2^2\equiv 0 \pmod{3},
\end{align*}
which simplifies to:
\begin{align}\label{eq:case3}
a_2-a_1^2a_2+a_1a_3-a_2^2\equiv 0 \pmod{3}.
\end{align}

\item {\bf Case 4: $r=5.$} Equation~\eqref{eq:pt1} becomes
\begin{align*}
\sum_{j=1}^5 {\delta_j \choose 5}
&=  \frac{1}{5!}\sum_{t=1}^5\sum_{\substack{ I\subset [4]\\ \# I=5-t} } \left( \prod_{k\in I}(-k)\right)p_t\\
&= \frac{1}{5!}\left( 24 p_1-50p_2+35p_3-10p_4+p_5 \right).
\end{align*}
Thus, $ \sum_{j=1}^5 {\delta_j \choose 5}$ is an integer if and only if
\begin{equation*}24 p_1-50p_2+35p_3-10p_4+p_5 \equiv 0 \pmod{5!},\end{equation*}
which is equivalent to
\begin{equation}\label{eq:case4_1}24 p_1-50p_2+35p_3-10p_4+p_5 \equiv 0 \pmod{m} \,\,\, \forall m \in \{3,4,5\}.\end{equation}
Reducing Equation~\eqref{eq:case4_1} modulo $5$ and substituting Equations~\eqref{sympoly},
we obtain
\[ -a_1+a_1^5-5a_1^3a_2+5a_1a_2^2+5a_1^2a_3-5a_2a_3 \equiv 0 \pmod 5,\] which is always satisfied.
Reducing Equation~\eqref{eq:case4_1} modulo $3$
we obtain
\[  p_2-p_3-p_4+p_5 \equiv 0 \pmod 3.\]
Substituting Equations~\eqref{sympoly}, we obtain:
\begin{align*}p_2-p_3-p_4+p_5 &\equiv a_1^2-2a_2-(a_1^3-3a_1a_2+3a_3)-(a_1^4-4a_1^2a_2+4a_1a_3+2a_2^2)\\ & \,\,\,\,\,\,\,\,+a_1^5-5a_1^3a_2+5a_1a_2^2+5a_1^2a_3-5a_2a_3 \\& \equiv 0\pmod{3}, \end{align*}
which we rewrite as
\begin{align*} a_1^2+a_2-a_1^3-a_1^4+a_1^2a_2-a_1a_3+a_2^2\\ 
+a_1^5+a_1^3a_2-a_1a_2^2-a_1^2a_3+a_2a_3&\equiv 0 \pmod{3}.\end{align*}
Cancelling appropriate powers of $a_1$, this simplifies to:
\begin{equation}\label{bleh33}a_2+a_1^2a_2-a_1a_3+a_2^2
+a_1a_2-a_1a_2^2-a_1^2a_3+a_2a_3 \equiv 0 \pmod{3}.\end{equation}
Combining the above with Equation~\eqref{eq:case3}, which we are already assuming as a condition on $a_i$, we substitute $a_2 \equiv a_1^2a_2-a_1a_3+a_2^2\pmod{3}$ in Equation~\eqref{bleh33} to obtain:
\[-a_2+a_1a_2-a_1a_2^2-a_1^2a_3+a_2a_3 \equiv 0 \pmod{3}.\]
Multiplying Equation~\eqref{eq:case3} by $a_1$ gives \[a_1a_2^2 \equiv a_1a_2-a_1^3a_2+a_1^2a_3 \equiv a_1^2a_3 \pmod{3}.\]
Substituting this in Equation~\eqref{bleh33}, we get 
\begin{equation}\label{case4mod3}-a_2+a_1a_2+a_1^2a_3+a_2a_3 \equiv 0 \pmod{3}.\end{equation}
Reducing Equation~\eqref{eq:case4_1} modulo $4$ and substituting Equations~\eqref{sympoly},
we obtain
\begin{align*}
2p_2-p_3+2p_4+p_5 &\equiv 2a_1^2-(a_1^3-3a_1a_2+3a_3)+2a_1^4\\
 & \,\,\,\,\,\,+a_1^5-5a_1^3a_2+5a_1a_2^2+5a_1^2a_3-5a_2a_3
\\
& \equiv 2a_1^2-(a_1^3+a_1a_2-a_3) +2a_1^4 \\ & \,\,\,\,\,\, +a_1^5-a_1^3a_2+a_1a_2^2+a_1^2a_3-a_2a_3\\
& \equiv2a_1^2-a_1^3-a_1a_2+a_3 +2a_1^4 \\ & \,\,\,\,\,\, +a_1^5-a_1^3a_2+a_1a_2^2+a_1^2a_3-a_2a_3\\
& \equiv 0 \pmod{4}.
\end{align*}
Using $a_1^4\equiv a_1^2\pmod{4}$ and $a_1^5\equiv a_1^3\pmod{4}$, the previous reduces to:
\begin{equation}\label{case4mod4}-a_1a_2+a_3-a_1^3a_2+a_1a_2^2+a_1^2a_3-a_2a_3 \equiv 0 \pmod{4}.\end{equation}
\end{itemize}
Combining Equations \eqref{eq:case2}, \eqref{eq:case3}, \eqref{case4mod3}, and \eqref{case4mod4} gives the stated theorem.
\end{proof}
\section{Defining a twisted $\tmf$-valued invariant}\label{sec:exist}

By Theorem~\ref{classification}, rank $3$ bundles on $\CP^5$ that are not determined by their Chern data arise from the action of $\pi_{10}BU{\hspace{-.6pt}}(3) \simeq \Z/3$ given in Construction~\ref{const:action_Z3}. To go from an action to an invariant, we must study the unstable homotopy of $BU{\hspace{-.6pt}}(3)$ in greater detail. We outline the key insights in Lemmas~\ref{lem:htpy_BU2_BU3_spheres}, \ref{lem:htpy_BU3_sphere2}, and \ref{lem:negative_gamma_works} below. These are motivational but not logically necessary for what follows, so we omit most proofs.
\begin{conv} Throughout this section all spaces and spectra are implicitly localized at $3$.
\end{conv}

For each $n\geq 1$, there is a fiber sequence $U(n+1)/U(n) \to BU{\hspace{-.6pt}}(n) \to BU{\hspace{-.6pt}}(n+1).$ Note that $U(n+1)/U(n)\simeq S^{2n+1}$, so in fact we have a fiber sequence

\begin{equation}\label{delta}S^{2n+1} \xrightarrow{\delta_{n+1}} BU{\hspace{-.6pt}}(n) \to BU{\hspace{-.6pt}}(n+1).\end{equation} 
Considering the long exact sequence on homotopy associated to \eqref{delta}, and using the fact that $\pi_{2n+1}BU(n+1)=0,$ we see that $\delta_{n+1}$ generates $\pi_{2n+1}BU(n) \simeq \Z/n!$ \cite{Mimura}. The homotopy class of $\delta_{n+1}$ is linked to the existence of non-isomorphic vector bundles with the same Chern data in both the case of rank $2$ bundles on $\CP^3$ and that of rank $3$ bundles on $\CP^5$, as shown by the next result.

\begin{lem}\label{lem:htpy_BU2_BU3_spheres} A generator for $\pi_6BU{\hspace{-.6pt}}(2)$ is given by the composite $$S^6 \xrightarrow{\eta^u} S^5 \xrightarrow{\delta_3} BU{\hspace{-.6pt}}(2),$$
where $\delta_3$ is as in \eqref{delta} and $\eta^u$ is an unstable representative for the lowest-degree order $2$ element in $\pi_*\sphere$.

A generator for $\pi_{10}BU{\hspace{-.6pt}}(3)$ is given by the composite $$S^{10} \xrightarrow{\alpha_1^u} S^{7} \xrightarrow{\delta_4} BU{\hspace{-.6pt}}(3),$$
where $\delta_4$ is as in \eqref{delta} and $\alpha_1^u$ is an unstable representative for the lowest-degree order $3$ element in $\pi_*\sphere$.
\end{lem}
Moreover, we can describe the generator for $\pi_{10}(BU{\hspace{-.6pt}}(3))$ as a further composite.
\begin{conv}\label{convalpha} Following standard naming conventions, we will use \[\eta\:\sphere^1 \to \sphere^0\] and \[\alpha_1\:\sphere^3 \to \sphere^0\] to refer to the the first $2$- and $3$-torsion elements in $\pi_*\sphere$, respectively. \end{conv}

\begin{lem}\label{lem:htpy_BU3_sphere2} Let $x\: S^4 \to BU{\hspace{-.6pt}}(3)$ generate $\pi_4BU{\hspace{-.6pt}}(3)$. Then there is a map $\epsilon\: S^7 \to S^4$ such that:

\begin{enumerate}
\item $\epsilon \circ x$ generates the three-torsion in $\pi_7BU{\hspace{-.6pt}}(3)$; and
\item $\susp \epsilon = \Sigma^4\alpha_1$.
\end{enumerate}
\end{lem}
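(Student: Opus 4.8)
The plan is to take $\epsilon$ to be an unstable order-$3$ class in $\pi_7 S^4$ and to verify the two conditions by factoring $x$ through $BSU(2)$. Since this section is implicitly localized at $3$, we have $\pi_7 BU(3)\simeq\Z/3$ and $\pi_3\sphere\simeq\Z/3\{\alpha_1\}$, while $\pi_7 S^4$ is the sum of an infinite cyclic group on the Hopf map $\nu$ and a $\Z/3$ on a class $\epsilon$. Writing $x_*\:\pi_7 S^4\to\pi_7 BU(3)$ for postcomposition with $x$ (so that $x_*\epsilon$ is the composite the statement denotes $\epsilon\circ x$), condition (1) asks that $x_*\epsilon$ be a generator and condition (2) that $\susp\epsilon=\alpha_1$. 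I would first prove both up to a $3$-local unit and then rescale $\epsilon$ by a unit to achieve $\susp\epsilon=\alpha_1$ on the nose; rescaling does not affect (1).

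For (1), I factor $x$ as $S^4\xrightarrow{i}BSU(2)\xrightarrow{j}BU(3)$, where $i$ is the bottom-cell inclusion $S^4=\mathbb{HP}^1\hookrightarrow\mathbb{HP}^\infty\simeq BSU(2)$ and $j$ is induced by the block inclusion $SU(2)\hookrightarrow SU(3)\hookrightarrow U(3)$. The composite $j\circ i$ pulls $c_2$ back to a generator of $H^4(S^4)$ (and kills $c_1$), so it is detected by $c_2$, generates $\pi_4 BU(3)$, and agrees with $x$ up to sign. I then compute the two factors on $\pi_7$. The $8$-cell of $\mathbb{HP}^2$ is attached along $\nu$, so the cofiber sequence $S^7\xrightarrow{\nu}S^4\to\mathbb{HP}^2$ gives $\pi_7 BSU(2)\simeq\pi_7\mathbb{HP}^2\simeq\pi_7 S^4/\langle\nu\rangle$; hence $i_*$ is surjective with kernel generated by $\nu$, the $\Z/3$ summand maps isomorphically, and $i_*\epsilon$ generates the three-torsion of $\pi_7 BSU(2)\simeq\pi_6 S^3$. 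For $j$, the fibration $SU(2)\to SU(3)\to S^5$ yields an exact sequence $\pi_7 S^5\to\pi_6 SU(2)\to\pi_6 SU(3)\to\pi_6 S^5$ whose outer terms are $2$-torsion, hence $3$-locally trivial; combined with $\pi_6 U(3)\simeq\pi_6 SU(3)$ (as $U(3)\simeq SU(3)\times S^1$), this shows $j_*\:\pi_7 BSU(2)\to\pi_7 BU(3)$ is a $3$-local isomorphism. Therefore $x_*\epsilon=j_* i_*\epsilon$ generates $\pi_7 BU(3)$.

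For (2), note that $\susp$ on $\pi_7 S^4$ is computed by the single suspension $E\:\pi_7 S^4\to\pi_8 S^5$, since $\pi_8 S^5\simeq\pi_3\sphere$ is already stable. The EHP sequence identifies $\ker E$ with the image of $P\:\pi_9 S^9\to\pi_7 S^4$, which is the subgroup generated by the Whitehead square $[\iota_4,\iota_4]$. Because $[\iota_4,\iota_4]$ has Hopf invariant $2\neq 0$, it has infinite order, so $\ker E$ is infinite cyclic and in particular torsion-free; it therefore cannot contain the order-$3$ class $\epsilon$. Hence $\susp\epsilon=E\epsilon\neq 0$, and since $\pi_3\sphere\simeq\Z/3$ this forces $\susp\epsilon$ to be a generator, which after the rescaling above equals $\alpha_1$.

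The conceptual crux --- and the step I would guard most carefully --- is that (1) and (2) interact: both $\nu$ and $\epsilon$ stabilize to generators of $\pi_3\sphere$, so (2) alone does not determine $\epsilon$, yet $x_*\nu=0$ precisely because $\nu$ is the class killed by $i$. The real content is thus that the desuspension of $\alpha_1$ witnessing the nontrivial composite into $BU(3)$ must be the torsion class, not the Hopf class. The one genuinely delicate input is that this torsion class stabilizes nontrivially; I would handle it through the torsion-freeness of $\ker E$ as above, though one could instead invoke Toda's relation $\susp\nu'=\pm 2\nu$ for the order-$12$ generator $\nu'\in\pi_6 S^3$. Everything else reduces to bookkeeping with the two fibrations and the cofiber sequence $S^7\xrightarrow{\nu}S^4\to\mathbb{HP}^2$.
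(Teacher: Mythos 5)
Your proof is correct, but it takes a genuinely different route from the paper. The paper deliberately omits a direct proof: it records this lemma (together with Lemma~\ref{lem:htpy_BU2_BU3_spheres}) as motivational and derives it after the fact from the proof of Theorem~\ref{thm:tmf_class_exists} --- concretely, from the computation $P^1(u)=-c_2\cdot u$ forcing $\Th{S^4}{-\gamma_3}\simeq C(\alpha_1)$, and from the nonvanishing of $\operatorname{Th}_{u_0}(\sigma)^*\tilde\rho=\alpha_1\beta_1$ in $\pi_{13}\tmf$, which certifies that the relevant composites are nontrivial. You instead give a self-contained unstable computation: factoring $x$ through the bottom cell of $BSU(2)\simeq \mathbb{HP}^\infty$, using the cofiber sequence $S^7\xrightarrow{\nu}S^4\to\mathbb{HP}^2$ (via Blakers--Massey) to see that $i_*$ kills exactly the free summand on $\nu$ and carries the $3$-torsion isomorphically, and the fibration $SU(2)\to SU(3)\to S^5$ to see that $j_*$ is a $3$-local isomorphism on $\pi_6$; for part (2) you correctly isolate the one delicate point, namely that the order-$3$ class survives suspension, and settle it by the integral identification of $\ker\bigl(E\:\pi_7S^4\to\pi_8S^5\bigr)$ with the infinite cyclic subgroup on $[\iota_4,\iota_4]$ (or equivalently via $\Sigma^2\nu'=\pm 2\nu$). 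All the inputs check out ($\pi_7S^4_{(3)}\simeq\Z_{(3)}\oplus\Z/3$, $\pi_6 SU(3)\simeq\Z/6$, the outer terms $\pi_7S^5$ and $\pi_6S^5$ being $2$-torsion), and the sign/rescaling bookkeeping is handled properly. What your approach buys is a proof that is elementary, classical, and logically independent of the $\tmf$-machinery of Sections~\ref{sec:exist}--\ref{sec:untwisting}; what the paper's approach buys is that the lemma comes for free from calculations it must do anyway, with no appeal to unstable EHP-type arguments.
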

\begin{rmk} Lemma~\ref{lem:htpy_BU2_BU3_spheres} for $BU{\hspace{-.6pt}}(3)$ and Lemma~\ref{lem:htpy_BU3_sphere2} will follow from the the proof of Theorem~\ref{thm:tmf_class_exists}. \end{rmk}
Combining the previous two lemmas, $\pi_{10}BU{\hspace{-.6pt}}(3)$ is generated by
\begin{equation}\label{eq:alpha12}\sigma\:S^{10}\xrightarrow{\alpha_1^u} S^7 \xrightarrow{\epsilon} S^4 \xrightarrow{x} BU{\hspace{-.6pt}}(3).\end{equation}

This shows that a generator $\sigma$ for $\pi_{10}BU{\hspace{-.6pt}}(3)$ is stably trivial in multiple ways: first, the bundle on $S^{10}$ represented by $\sigma$ is stably trivial, meaning is null after composition with the natural map $BU(3) \to BU$. This follows from the fact that $\sigma$ is torsion in $\pi_*BU(3)$ while $\pi_*BU$ is torsion-free. Second, $\susp \sigma =\Sigma^4 \alpha_1^2x=0$ since $\alpha_1^2=0$ in $\pi_*\sphere$. In fact the composition in \eqref{eq:alpha12} is null after just one suspension.

Instead of applying the suspension spectrum functor to Diagram~\eqref{eq:alpha12}, we can apply a Thom spectrum functor, with respect to a multiple of the universal bundle on $BU(3)$. Let $V$ be a bundle on $BU{\hspace{-.6pt}}(3)$ (for example, the universal bundle $\gamma_3$, its determinant, or $-\gamma_3$). Equation~\eqref{eq:alpha12} gives a sequence of spectra

\begin{equation}\label{eq:thomified_htpy0}
\begin{tikzcd}[row sep=1.2em]
\Th{S^{10}}{V|_{S^{10}}}\ar[r,"\operatorname{Th}(\alpha_1^u)"] & \Th{S^7}{V|_{S^7}}\ar[r,"\operatorname{Th}(\epsilon)"] & \Th{S^4}{V|_{S^4}} \ar[r,"\op{Th}(x)"] & \Th{BU{\hspace{-.6pt}}(3)}{V}.
\end{tikzcd}
\end{equation}

Since $\alpha_1^u \circ \epsilon \circ x$ and $\epsilon \circ x$ represent torsion classes in $\pi_*BU(3)$, thus are trivial in $\pi_*BU$, the bundles $V|_{S^{10}}$ and $V|_{S^7}$ are stably trivial in Diagram~\eqref{eq:thomified_htpy0} above. If we fix a spherical orientation fo $V|_{S^7}$, we obtain a diagram:
\begin{equation}\label{eq:thomified_htpy}
\begin{tikzcd}[row sep=1.5em, column sep=3em]
\sphere^{10}\ar[d]\ar[drrr,bend left=20, dashed, "\tilde v" above] \ar[r,"\Sigma^7\alpha_1" below] &\sphere^7\ar[d] \\
\suspp S^{10}\ar[r,"\operatorname{Th}(\alpha_1^u)"] & \suspp S^7\ar[r,"\operatorname{Th}(\epsilon)"] & \Th{S^4}{V|_{S^4}} \ar[r,"\op{Th}(x)"] & \Th{BU{\hspace{-.6pt}}(3)}{V}.
\end{tikzcd}
\end{equation}
For various choices of $V$, we can ask whether $\tilde v$ is null.

The Thom spectrum $\Th{S^4}{V|_{S^4}}$ has two cells, one in degree four and one in degree zero. Thomifying can either keep the cells split or introduce an $\alpha_1$-attaching map, in which case the Thom spectrum is $C(\alpha_1)$, the cofiber of the map $\alpha_1\: \sphere^3 \to \sphere^0$ defined in Convention~\ref{convalpha}. 
Which option occurs depends on $V$. For the dotted composite $\tilde v$ to be nonzero, the latter must occur.
Moreover, given any spectrum $X$ together with a map $C(\alpha_1)\to X$, if

\begin{equation}\label{eq:composite}\big(\sphere^{10} \xrightarrow{\Sigma^7\alpha_1} \sphere^{7} \to C(\alpha_1)\to X \big) \not\simeq 0.\end{equation} Then the image in $X$ of the $4$-cell of $C(\alpha_1)$ cannot support a $P^1$ in $X$. 
Indeed, let $x$ denote the cohomology class on $X$ corresponding to the image of the $4$-cell of $C(\alpha_1)$. If $P^1(x)=y$ in the cohomology of $X$, then $y$ is an $8$-cell attached via an $\alpha_1$ attaching map, and $\sphere^7 \to C(\alpha_1) \to X$ is null, since this composite corresponds to $\alpha_1$ on $x$.

From experimentation, it seems that taking $X=\Th{BU{\hspace{-.6pt}}(3)}{V}$ with $V$ any multiple, exterior power, or tensor power of $\gamma_3$ fails one condition or the other. Explicitly, either $\Th{S^4}{V|_{S^4}}\simeq \suspp S^4$ or there is a nonzero $P^1$ on the relevant $4$-cell of $\Th{BU{\hspace{-.6pt}}(3)}{V}$.

To resolve this issue, we modify our classifying space.
\begin{defn}\label{def:BUn_coz} Let $BU{\hspace{-.6pt}}(n)_{\coz}:=\op{hofib}\big(c_1\pmod 3\: BU{\hspace{-.6pt}}(n)\to K(\Z/3,2)\big).$\end{defn}
\begin{rmk} We note a few facts about $BU{\hspace{-.6pt}}(n)_{\coz}$. 
\begin{itemize}
\item The space $BU{\hspace{-.6pt}}(n)_{\coz}$ has even cohomology, so
any rank $n$ bundle on $\CP^k$ with $c_1\equiv 0 \pmod 3$ lifts uniquely, up to homotopy, along the natural map $BU{\hspace{-.6pt}}(n)_{\coz}\to BU{\hspace{-.6pt}}(n)$.
\item The natural map $BU{\hspace{-.6pt}}(n)_{\coz} \to BU{\hspace{-.6pt}}(n)$ is a induces an isomorphism on $i$-th homotopy groups for $i>2$.
\item The space $BU{\hspace{-.6pt}}(n)_{\coz}$ carries a universal bundle which we denote $\gamma_n$.
\end{itemize}
\end{rmk}
Thus our previous analyses of rank $3$ bundles on $\CP^5$ can be repeated after adding the constraint $c_1\equiv 0 \pmod 3$ and substituting $\BUc$ in place of $BU{\hspace{-.6pt}}(3)$. In particular, we get a modification of Diagram~\eqref{eq:thomified_htpy}:
\begin{equation}\label{eq:thomified_htpy2}
\begin{tikzcd}[row sep=1.5em, column sep=3em]
\sphere^{10}\ar[d]\ar[drrr,bend left=20, dashed, "\tilde v" above] \ar[r,"\Sigma^7\alpha_1" below] &\sphere^7\ar[d] \\
\suspp S^{10}\ar[r,"\operatorname{Th}(\alpha_1^u)"] & \suspp S^7\ar[r,"\operatorname{Th}(\epsilon)"] & \Th{S^4}{\smallminus\gamma_3} \ar[r,"\op{Th}(x)"] & \BUct.
\end{tikzcd}
\end{equation}
\begin{lem}\label{lem:negative_gamma_works} In the diagram above, $\Th{S^{4}}{\smallminus\gamma_3}\simeq C(\alpha_1)$ and the element $\tilde v$ is nontrivial in $\pi_{10}\big(\BUct\big).$
\end{lem}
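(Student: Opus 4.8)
The plan is to prove the two assertions in turn: first the identification $\Th{S^4}{-\gamma_3}=C(\alpha_1)$, then the nontriviality of $\tilde v$ in $\pi_{10}\BUct$.

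For the first assertion I would argue via the Thom isomorphism and a single Steenrod computation. Since $-\gamma_3$ has virtual dimension zero, $\Th{S^4}{-\gamma_3}$ is a two-cell spectrum with cells in degrees $0$ and $4$, so its attaching map lies in $\pi_3\sphere\simeq \Z/3\{\alpha_1\}$; to see this attaching map is a generator it suffices to show $P^1$ acts nontrivially from degree $0$ to degree $4$ in $\HF{3}{*}(\Th{S^4}{-\gamma_3})$, since $\alpha_1$ is exactly the class detected by $P^1$ while the split spectrum $\sphere\vee\sphere^4$ supports no such operation. Writing $U$ for the Thom class, multiplicativity of Thom classes together with the Cartan formula shows that $P^1(U)=U\cdot\theta$ where $\theta$ is additive in the bundle and equals $x^{2}$ for a line bundle with Chern root $x$; by the splitting principle $\theta=\sum_i x_i^{2}=c_1^2-2c_2$ for the Chern roots $x_i$ of $-\gamma_3$. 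Restricting along $x\: S^4\to\BUc$ we have $c_1=0$ and $c_2$ a generator of $\HF{3}{4}(S^4)$, whence $P^1(U)=-2\,Uc_2=Uc_2\neq 0$. Therefore $\Th{S^4}{-\gamma_3}=C(\alpha_1)$. (Equivalently, one computes the $J$-homomorphism image of the stable class of $-\gamma_3|_{S^4}$, which generates the $3$-torsion $\Z/3\subset\pi_3\sphere$.)

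For the second assertion I would first unwind the definition of $\tilde v$. Thomifying the generator $\sigma=x\circ\epsilon\circ\alpha_1$ of $\pi_{10}\BUc$ along $-\gamma_3$, and using that $\sigma^*(-\gamma_3)$ is stably trivial over $S^{10}$, exhibits $\tilde v$ as the restriction to the top cell of $\op{Th}(\sigma)$. Reading off Diagram~\eqref{eq:thomified_htpy2}, this gives the factorization $\tilde v=\op{Th}(x)\circ\bar\epsilon\circ\alpha_1$, where $\bar\epsilon\:\sphere^7\to C(\alpha_1)$ is the top-cell part of $\op{Th}(\epsilon)$. Because $\susp\epsilon=\alpha_1$ (Lemma~\ref{lem:htpy_BU3_sphere2}), the composite of $\bar\epsilon$ with the collapse $q\:C(\alpha_1)\to\sphere^4$ is $\alpha_1\in[\sphere^7,\sphere^4]$; that is, $\bar\epsilon$ is a lift of $\alpha_1$ across $q$, and such a lift exists since the relevant obstruction lies in $\pi_6\sphere=0$ at the prime $3$. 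The key identification is then that $\bar\epsilon\circ\alpha_1\in\pi_{10}C(\alpha_1)$ represents the Toda bracket $\langle\alpha_1,\alpha_1,\alpha_1\rangle$: the spectrum $C(\alpha_1)$ encodes one $\alpha_1$ as its attaching map, $\bar\epsilon$ lifts a second, and we precompose with a third, which is precisely the cofiber description of the triple bracket. Computing $\pi_{10}C(\alpha_1)$ from the cofiber sequence $\sphere\to C(\alpha_1)\to\sphere^4$, and using $\pi_6\sphere=0$ together with $\alpha_1\alpha_2=0$ at the prime $3$, gives $\pi_{10}C(\alpha_1)\simeq\Z/3$ generated by the bottom-cell image of $\beta_1\in\pi_{10}\sphere$. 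Since the bracket $\langle\alpha_1,\alpha_1,\alpha_1\rangle=\pm\beta_1$ with zero indeterminacy (the indeterminacy is $\alpha_1\cdot\pi_7\sphere=\alpha_1\cdot\Z/3\{\alpha_2\}=0$), the element $\bar\epsilon\circ\alpha_1$ is a generator of $\pi_{10}C(\alpha_1)$.

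It therefore remains to show that $\op{Th}(x)_*\:\pi_{10}C(\alpha_1)\to\pi_{10}\BUct$ is nonzero on this generator, and this is the main obstacle. The difficulty is that $\op{Th}(x)$ need not be a skeletal inclusion, and the even cells of $\BUct$ below degree $10$ (in particular the degree-$6$ and degree-$8$ cells) could a priori carry attaching maps whose composite with $\alpha_1$ hits $\beta_1$, killing it. Evenness of $\BUc$ rules out an $11$-cell and hence the top obstruction, but not the intermediate ones. To finish I would detect $\tilde v$ through topological modular forms: by Theorem~\ref{thm:imprecise_existence} the pullback of $\tilde\rho\in\tmf^{-3}(\sk^{26}\BUct)$ along the Thomification of a generator of $\pi_{10}\BUc$ induces the isomorphism~\eqref{eq:isom_pi_10}, so $\tilde v^*\tilde\rho\neq 0$ in $\tmf^{-3}(\sphere^{10})\simeq\pi_{13}\tmf$, forcing $\tilde v\neq 0$. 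Conceptually this works because $\beta_1$ has nonzero Hurewicz image in $\tmf$; alternatively, once $\HF{3}{*}(\BUct)$ and its $P^1$-action are computed (Subsection~\ref{subsec:cohomology_BUct}), one can detect $\tilde v$ directly by the secondary cohomology operation associated with $\langle P^1,P^1,P^1\rangle$. Either route completes the proof that $\tilde v$ is nontrivial in $\pi_{10}\BUct$.
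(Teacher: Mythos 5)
Your proposal is correct and takes essentially the route the paper intends: the paper omits an explicit proof of this motivational lemma, but your Thom-class computation $P^1(u)=\pm c_2\cdot u$ restricted to $S^4$ is exactly Proposition~\ref{prop:cohomology_BUct}, and your identification of $\tilde v$ with the image of $\beta_1=\langle\alpha_1,\alpha_1,\alpha_1\rangle$ on the bottom cell of $C(\alpha_1)$, detected by $\tilde\rho$ in $\tmf$-cohomology, is precisely the mechanism of Corollary~\ref{cor:exists} and Corollary~\ref{cor:uniqueness}. Invoking Theorem~\ref{thm:imprecise_existence} here is not circular, since its proof does not depend on this lemma.
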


It will be useful to have better terminology for the Thomified homotopy classes such as $\tilde v$.

\begin{defn}\label{def:thom_sigma} Given a pointed map $y\: S^n \to BU{\hspace{-.6pt}}(r)_{\coz}$ representing a stably trivial bundle on $S^n$, and a nullhomotopy $u$ of the composite map to $BGL_1\sphere$, let $\op{Th}_u(y)\: \sphere^n \to \Th{BU{\hspace{-.6pt}}(r)}{\smallminus\gamma_r}$ denote the composite
$$ \sphere^{n} \xrightarrow{i_2}\suspp S^n \xrightarrow{u_{\simeq}} \Th{S^n}{-y} \to \Th{BU{\hspace{-.6pt}}(r)}{\smallminus\gamma_r},$$
where the arrow $i_2$ is the inclusion of the top cell determined by a base point and the map $u_{\simeq}$ is the spherical Thom isomorphism determined by $u$. (We recall the basics of orientations and Thom isomorphisms in Subsection~\ref{subsec:background_orient}.)
\end{defn}

The main goal of this section is to prove the following:
\begin{thm}\label{thm:tmf_class_exists} Given $\sigma\: S^7 \to BU{\hspace{-.6pt}}(3)$ generating $\pi_7BU{\hspace{-.6pt}}(3)$ and a Thom class $u_0$ for $\sigma$, there is a unique class $$\tilde \rho \in \tmf^{-3}( \sk^{26}\BUct)$$ such that $$\operatorname{Th}_{u_0}(\sigma)^*\tilde \rho=\alpha_1\beta_1 \in \pi_{13}\tmf,$$ where $\sk^{26}\BUct$ is the Thomification of a $26$-skeleton of $\BUc.$
\end{thm}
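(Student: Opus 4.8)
The plan is to construct the class $\tilde\rho$ by an obstruction-theoretic computation in $\tmf$-cohomology, proceeding skeleton by skeleton up $\sk^{26}\BUct$, and to establish uniqueness by the same Atiyah--Hirzebruch-type analysis. The target is $\tmf^{-3}(\sk^{26}\BUct)$, and the condition we must satisfy is a single normalization: that the pullback along $\op{Th}_{u_0}(\sigma)$ hits $\alpha_1\beta_1 \in \pi_{13}\tmf$. So the first step is to pin down the two ``endpoints'' of the problem. On the source side, I would compute the $\HF{3}{*}$-cohomology of $\BUct$ through degree $26$; by Lemma~\ref{lem:negative_gamma_works} the bottom cells assemble into a copy of $C(\alpha_1)$, and the Thom isomorphism relates $\HF{3}{*}(\BUct)$ to $\HF{3}{*+?}(\BUc)$, which I can access from the (shifted) cohomology of $BU(3)$ computed in Section~\ref{sec:count}. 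On the target side I need the relevant homotopy and cohomology of $3$-local $\tmf$: recall $\pi_{13}\tmf \simeq \Z/3$ is generated by $\alpha_1\beta_1$, and $\tmf^{-3}(\sphere^{0}) = \pi_3\tmf \simeq \Z/3$ is generated by $\alpha_1$. The key structural input is that $\tmf$ carries a nonzero $P^1$ (the Milnor operation / $\beta_1$-related $v_1$-behavior) detecting the passage from the $\alpha_1$ in degree $3$ to $\alpha_1\beta_1$ in degree $13$, which is exactly the obstacle the discussion around Equation~\eqref{eq:composite} flagged: the image of the $4$-cell must support the right $P^1$, and Lemma~\ref{lem:negative_gamma_works} guarantees the $C(\alpha_1)$-structure that makes this possible.

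**Next I would** set up the Atiyah--Hirzebruch spectral sequence (AHSS) computing $\tmf^*(\sk^{26}\BUct)$, with $E_2^{s,t} = H^s(\sk^{26}\BUct; \pi_{-t}\tmf)$. The class $\tilde\rho$ in degree $-3$ is to be detected on the appropriate filtration line; the normalization condition says its restriction along the $C(\alpha_1)$ at the bottom must be the canonical generator. Concretely, the $3$-cell's image under $\op{Th}_{u_0}(\sigma)^*$ should carry $\alpha_1 \in \pi_3\tmf$ and, via the nontrivial $P^1$ on the $4$-cell (which exists precisely because the bottom is $C(\alpha_1)$ and not split), this forces the value $\alpha_1\beta_1$ in $\pi_{13}\tmf$ after accounting for the degree-$10$ shift coming from the generator of $\pi_{10}\BUc$. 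I would therefore build $\tilde\rho$ cell-by-cell: start with a class on the bottom $C(\alpha_1)$ realizing the required generator, then check that all obstructions to extending over each successive cell of $\sk^{26}\BUct$ vanish. The obstructions live in AHSS differentials and in $\tmf^{-2}$ of the successive quotients; I expect these to be controlled by the sparse structure of $\pi_*\tmf$ at the prime $3$ (which is concentrated in a predictable pattern generated by $\alpha_1, \beta_1$, and their products) together with the cohomological computation of $\BUct$.

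**The hard part will be** the cohomology computation of $\BUct$ through degree $26$ and the verification that the AHSS differentials and extension obstructions genuinely vanish in the range relevant to degree $-3$. This is where the interplay between the Steenrod module structure of $\HF{3}{*}(\BUct)$ and the $\tmf$-module structure of the $E_\infty$-page becomes delicate: I must show both that no differential can kill the candidate class $\tilde\rho$ and that no differential can create a competing class, which is simultaneously what yields existence and uniqueness. I anticipate that truncating at the $26$-skeleton is chosen precisely so that potential higher obstructions (from $\beta_1^2$ or other higher $\tmf$-classes) fall outside the range, keeping the relevant groups small enough to force a unique answer. So my approach would be to first carry out the full $\HF{3}{*}$-computation, identify the handful of cohomology classes in the degrees that can contribute to $\tmf^{-3}$ and $\tmf^{-2}$, and then run the AHSS carefully in exactly that window, using the known $3$-primary structure of $\tmf$ to rule out all but the intended class.
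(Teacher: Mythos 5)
Your outline correctly identifies several of the load-bearing ingredients: the copy of $C(\alpha_1)$ at the bottom of $\BUct$ coming from $P^1(u)=-c_2\cdot u$, the Toda bracket $\langle\alpha_1,\alpha_1,\alpha_1\rangle=\beta_1$ that converts an $\alpha_1$ on the bottom cell into $\alpha_1\beta_1$ on $\sphere^{10}$, and the need for the mod~$3$ cohomology of $\BUct$ through degree $26$ as a module over $P^1$. But the central step --- ``check that all obstructions to extending over each successive cell vanish,'' justified by the ``sparse structure of $\pi_*\tmf$'' --- does not go through as stated, and this is exactly where the paper has to do something different. The cells of $\BUct$ sit in every even degree, so the obstruction to extending a $\Sigma^{-3}\tmf$-valued class over a $2k$-cell lies in $\pi_{2k+2}\tmf$, and these groups are not sparse: they contain $\Z_{(3)}$-summands in degrees $8,12,16,20,24$ (from $c_4$, $c_6$, \dots) and $\Z/3$-summands in degrees $10$ and $20$ (from $\beta_1$, $\beta_1^2$). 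Equivalently, the total obstruction group $\tmf^{-2}(C)$, for $C$ the cofiber of $k\:C(\alpha_1)\to\sk^{26}\BUct$, is nonzero, so no vanishing argument is available; you would have to show that the \emph{specific} obstruction class is zero, which forces you to control higher Atiyah--Hirzebruch differentials out of $u\otimes\alpha_1$ (e.g.\ a potential differential into $\beta_1^2$-classes in $H^{18}$, governed by a tertiary operation) that the cohomology of $\BUct$ alone does not determine. Note also that the $26$-skeleton does \emph{not} push the $\beta_1^2$-contributions out of range, so that part of your plan does not rescue the argument.

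The paper's resolution (Proposition~\ref{prop:splitting}) is to change the target: rather than extending a $\Sigma^{-3}\tmf$-valued class cell by cell, it splits $k\otimes\tmf$ as a map of $\tmf$-modules, producing a retraction $r\:\sk^{26}\BUct\otimes\tmf\to C(\alpha_1)\otimes\tmf$, and only afterwards composes with an extension $o\:C(\alpha_1)\to\Sigma^{-3}\tmf$ of $\alpha_1$. The obstruction to the retraction lies in $\pi_0\operatorname{Maps}_{\sphere}(\Sigma^{-1}C, C(\alpha_1)\otimes\tmf)$, and this group genuinely vanishes (Lemma~\ref{lem:zero_maps}): since $C$ has only even cells, only the odd-degree homotopy of $C(\alpha_1)\otimes\tmf$ can contribute, and in degrees $\le 26$ that is concentrated in the two degrees $7$ and $17$ (the classes $\alpha_1[4]$ and $\alpha_1\beta_1[4]$); the handful of resulting $E_2$-terms are then killed by explicit $d_4$'s and $d_8$'s computed from $P^1$ and $P^1P^1$ using Proposition~\ref{prop:cohomology_BUct}. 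Uniqueness is likewise not ``the same analysis'': it is a separate vanishing statement, $\pi_0\operatorname{Maps}_{\sphere}(C'(\epsilon),\Sigma^{-4}\tmf)=0$ for $C'(\epsilon)=\operatorname{Cof}(\sphere^7\to\sk^{26}\BUct)$ (Proposition~\ref{prop:uniqueness}), whose proof needs the extra class $y=P^1P^1(u)$ in the cofiber, computed by comparison with $BU(4)_{\coz}$. So while your sketch has the right skeleton and the right normalization, it is missing the one idea --- routing the construction through $C(\alpha_1)\otimes\tmf$ so that the parity of the cells of $C$ forces the obstruction group to vanish --- without which the extension step stalls.
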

\begin{rmk} Throughout this section, we will use certain structural properties of the spectrum of topological modular forms, localized at the prime $3$. The spectrum $\op{tmf}$ of topological modular forms is a celebrated object in homotopy theory; its existence and geometric properties are likely to be of general interest to the reader. We refer to \cite{DFHH} for an exposition and further sources on $\op{tmf}$. For our purposes, we will only use that $\tmf$ is an ($E$-infinity) ring spectrum and that its $3$-local homotopy ring has a certain form. A summary of the homotopy of $\tmf$ is contained in \cite[Part 1, Chapter 13, Section 1]{DFHH}.
We will frequently use the following fact: the classes $\alpha_1 \in \pi_3\sphere,$ 
$\beta_1\in \pi_{10}\sphere$, and $\alpha_1\beta_1 \in \pi_{13}\sphere$ all have nonzero images under the Hurewicz map $\pi_{*}\sphere \to \pi_{*}\tmf$ induced by the unit map $\sphere \to \tmf$.
\end{rmk}
\begin{rmk} The reader may wonder why we look for a class in $\tmf$ cohomology. The spectrum $X=\Sigma^{-3}\tmf$ carries a natural map $C(\alpha_1)\to \Sigma^{-3}\tmf$ such that Equation~\eqref{eq:composite} holds. Moreover, $\tmf$ is one of the simplest ring spectra with this property. This makes $\tmf$ a natural candidate to detect $\pi_{10}\BUc$.
\end{rmk}

In Subsection~\ref{subsec:start_existence_proof}, we outline the proof strategy for Theorem~\ref{thm:tmf_class_exists}. We prove $\tilde \rho$ exists in Subsection~\ref{subsec:proof_exists}, predicated on cohomology calculations which are recorded in Subsection~\ref{subsec:cohomology_BUct}. The proof of uniqueness of $\tilde \rho$, given in Subsection~\ref{subsec:proof_unique}, also uses these calculations.

\subsecl{Proof outline: existence and uniqueness of a twisted $\tmf$ invariant}{subsec:start_existence_proof}

In this subsection, we will state a sequence of propositions and explain how they imply Theorem~\ref{thm:tmf_class_exists}. To begin, let $u$ be the canonical Thom class in the $\HZt$-cohomology of $\BUct$. As a module over the $\mod 3$ Steenrod algebra,

$$\HF{3}{*}\BUct \simeq \big(\HF{3}{*}\BUc\big)\cdot u.$$ We find that $P^1(u)=-c_2\cdot u$. Since $P^1$ detects $\alpha_1$ attaching maps, this implies that $\alpha_1$ on the Thom class in $\BUct$ is zero. Therefore the obstruction to extending the zero cell $\sphere^0 \to \BUct$ over the cofiber of $\alpha_1\: \sphere^3 \to \sphere^0$ vanishes, and we may choose and extension 
$$k\: C(\alpha_1)\to \BUct$$
that takes the $0$-cell in $C(\alpha_1)$ to the $0$-cell in $\BUct$ and the $4$-cell in $C(\alpha_1)$ to the cell in $\BUct$ representing the dual to the cohomology class $-c_2\cdot u$.

\begin{prop}\label{prop:splitting} The map $k\: C(\alpha_1) \to \sk^{26}\BUct$ splits after tensoring with $\tmf$. More precisely, there is a map of $\tmf$-modules $$r\: (\sk^{26}\BUct)\otimes \tmf \to C(\alpha_1) \otimes \tmf$$ so that $r\circ (k\otimes \tmf)$ is homotopic to the identity.
\end{prop}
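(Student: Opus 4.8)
The plan is to recast the splitting as an extension problem and then solve it via the algebra of $\tmf$ at the prime $3$. Since $(\sk^{26}\BUct)\otimes\tmf$ is the free $\tmf$-module on the spectrum $\sk^{26}\BUct$, the free--forgetful adjunction identifies a $\tmf$-linear map $r$ as in the statement with a map of spectra $\bar r\: \sk^{26}\BUct \to C(\alpha_1)\otimes\tmf$, and the condition $r\circ(k\otimes\tmf)\simeq\operatorname{id}$ becomes the condition that the restriction $\bar r\circ k$ agrees with the unit map $C(\alpha_1)\to C(\alpha_1)\otimes\tmf$. Setting $Q:=\operatorname{cofib}(k)$, this says precisely that the unit map extends over $k$ along the cofiber sequence $C(\alpha_1)\xrightarrow{k}\sk^{26}\BUct\to Q$; equivalently, that $k\otimes\tmf$ is a split monomorphism of $\tmf$-modules.

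To produce $\bar r$ I would run the $H\mathbb F_3$-based Adams spectral sequence computing $[\sk^{26}\BUct,\,C(\alpha_1)\otimes\tmf]$. Because $\HF{3}{*}\tmf\cong\mathcal A/\!/\mathcal A(1)$ with $\mathcal A(1)=\langle\beta,P^1\rangle$, a change-of-rings identifies its $E_2$-page with $\operatorname{Ext}_{\mathcal A(1)}\big(\HF{3}{*}C(\alpha_1),\,\HF{3}{*}\sk^{26}\BUct\big)$. The filtration-zero part is $\operatorname{Hom}_{\mathcal A(1)}$, and here the algebra is easy: by Proposition~\ref{prop:cohomology_BUct} the classes $u$ and $-c_2u=P^1u$ satisfy $\beta u=0$ and $P^1(-c_2u)=0$ (using $c_1\equiv0\pmod3$ on $\BUc$, so that $P^1c_2=c_2^2$ and hence $P^1(c_2u)=c_2^2u+c_2(-c_2u)=0$), so the assignment $a_0\mapsto u$, $a_4\mapsto -c_2u$ defines an $\mathcal A(1)$-linear section of $k^*$. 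This exhibits $\HF{3}{*}C(\alpha_1)$ as an $\mathcal A(1)$-direct summand and furnishes a filtration-zero class $s\in\operatorname{Ext}^0_{\mathcal A(1)}$ that is a candidate for $\bar r$ at $E_2$.

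The substantive step, and the one I expect to be the main obstacle, is realization: showing that $s$ is a permanent cycle and that the resulting $\bar r$ composes with $k$ to give the unit exactly, rather than merely up to higher Adams filtration. Both the differentials emanating from $s$ and the indeterminacy in the composite $\bar r\circ k$ are governed by $\operatorname{Ext}^{\geq1}_{\mathcal A(1)}$-groups in internal degrees constrained by the finite range $\leq 26$. At $p=3$ the Adams chart for $\mathcal A(1)$ is extremely sparse, with large vanishing regions between the $\alpha_1$- and $\beta_1$-towers, so one should be able to check that all of these obstruction bidegrees vanish, forcing $s$ to survive and the composite to be the unit. Carrying this out rigorously requires the explicit $\mathcal A(1)$-module structure of $\HF{3}{*}\sk^{26}\BUct$ from Subsection~\ref{subsec:cohomology_BUct}, both to pin down the relevant $\operatorname{Ext}$-groups and to confirm that no $\beta$- or iterated-$P^1$-operation from the higher cells of $\sk^{26}\BUct$ reconnects to $u$ or $-c_2u$ in a way that would perturb the summand.
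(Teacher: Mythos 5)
Your opening reduction is essentially the paper's: the paper forms $C=\operatorname{cof}(k)$ and proves the connecting map $\Sigma^{-1}C\otimes\tmf\to C(\alpha_1)\otimes\tmf$ is null by showing $\pi_0\operatorname{Maps}_{\tmf}(\Sigma^{-1}C\otimes\tmf,C(\alpha_1)\otimes\tmf)=0$, which is equivalent to the surjectivity-onto-the-unit statement you want; and your filtration-zero algebra ($\beta u=0$, $P^1u=-c_2u$, $P^1(c_2u)=0$) matches Proposition~\ref{prop:cohomology_BUct}. The gaps are in the two steps you rely on to finish. First, the change of rings rests on a false input: at the prime $3$ one does \emph{not} have $\HF{3}{*}(\tmf)\cong\mathcal A/\!\!/\mathcal A(1)$. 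The dual of $\mathcal A/\!\!/\mathcal A(1)$ is $P(\xi_1^3,\xi_2,\dots)\otimes E(\tau_2,\dots)$, which vanishes in degrees $1$ through $11$, whereas $\HF{3}{8}(\tmf)\neq 0$ because $c_4$ generates a $\Z_{(3)}$ in $\pi_8\tmf$ (so $H_8(\tmf;\mathbb Q)\neq 0$). The clean $p=2$ statement $H^*(\tmf;\mathbb F_2)\cong\mathcal A/\!\!/\mathcal A(2)$ has no direct analogue at $p=3$: the mod $3$ cohomology of $\tmf$ is not cyclic over the Steenrod algebra, and any Adams-type argument must carry along the extra even-degree classes reflecting $c_4$, $c_6$, etc. (The paper avoids this by running an Atiyah--Hirzebruch spectral sequence with coefficients $\pi_*(C(\alpha_1)\otimes\tmf)$, in which those classes form an even summand supporting no $\alpha_1$- or $\beta_1$-multiplications and can be discarded.)

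Second, and more seriously, the step you defer as "the main obstacle" is where all the content lives, and your proposed resolution --- sparseness forcing the obstruction groups to vanish --- is not what actually happens. In the paper's spectral sequence the line $p+q=0$ has \emph{nonzero} $E_2$-entries: one in bidegree $(-7,7)$ coming from $\alpha_1[4]$ against $H^{7}(\Sigma^{-1}C)$, and one in bidegree $(-17,17)$ coming from $\alpha_1\beta_1[4]$ against the $12$-dimensional group $H^{17}(\Sigma^{-1}C)$. These survive to $E_4$ and are eliminated only by explicitly computed $d_4$ and $d_8$ differentials, governed by the $P^1$- and $P^1P^1$-action on $\HF{3}{*}(\BUct)$ recorded in Equation~\eqref{eq:P1_generators} and by the relation $\langle\alpha_1,\alpha_1,\alpha_1\rangle=\beta_1$; in particular several of the target classes in bidegree $(-17,17)$ are killed as \emph{targets} of a $d_8$ from $(-10,9)$, not as sources. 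In your Adams-style setup the analogous statement is that the obstruction bidegrees are nonzero at $E_2$ and must be dispatched by concrete higher differentials or Massey products, none of which you identify. As written, the argument has a genuine gap at exactly the point the proposition requires.
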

Fix $u_0$ a spherical orientation for $\sigma\: S^{10}\to \BUc$ generating $\pi_{10}\BUc.$ Assuming the previous proposition, we immediately obtain:
\begin{cor}\label{cor:exists} There is a map $\tilde \rho\: \sk^{26}\BUct\to\Sigma^{-3}\tmf$ such that $$\left(\sphere^{10} \xrightarrow{\Thom_{u_0}(\sigma)} \sk^{26} \BUct \xrightarrow{\tilde\rho} \Sigma^{-3}\tmf\right)=\alpha_1\beta_1$$ in $\pi_{13}\tmf$.
\end{cor}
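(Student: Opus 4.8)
The plan is to obtain $\tilde\rho$ by composing the $\tmf$-linear splitting $r$ of Proposition~\ref{prop:splitting} with a canonical map out of $C(\alpha_1)$, so that the genuine homotopy-theoretic content is entirely packaged in Proposition~\ref{prop:splitting}. First I would record the target-side input. Since $\pi_7\tmf=0$ and $\alpha_1^2=0$ in $\pi_\ast\tmf$, the class $\alpha_1\in\pi_3\tmf=\pi_0\Sigma^{-3}\tmf$, viewed as a map $\sphere^0\to\Sigma^{-3}\tmf$, extends uniquely over the bottom-cell inclusion $\sphere^0\to C(\alpha_1)$ to a map
\[
\phi\colon C(\alpha_1)\longrightarrow\Sigma^{-3}\tmf ;
\]
this is exactly the natural map singled out in the Remark following Theorem~\ref{thm:tmf_class_exists}. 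Under the free--forgetful adjunction $(-)\otimes\tmf\dashv U$, $\phi$ corresponds to a $\tmf$-module map $\phi_{\tmf}\colon C(\alpha_1)\otimes\tmf\to\Sigma^{-3}\tmf$ with $\phi_{\tmf}\circ\eta=\phi$, where $\eta$ denotes the unit of $(-)\otimes\tmf$.

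Next I would \emph{define} $\tilde\rho$ to be the adjoint of the $\tmf$-linear composite
\[
(\sk^{26}\BUct)\otimes\tmf\xrightarrow{\ r\ }C(\alpha_1\otimes\tmf)=C(\alpha_1)\otimes\tmf\xrightarrow{\ \phi_{\tmf}\ }\Sigma^{-3}\tmf,
\]
that is, $\tilde\rho\in\tmf^{-3}(\sk^{26}\BUct)=[\sk^{26}\BUct,\Sigma^{-3}\tmf]$ is the image of $\phi_{\tmf}\circ r$ under $\operatorname{Maps}_{\tmf}\big((\sk^{26}\BUct)\otimes\tmf,\Sigma^{-3}\tmf\big)\simeq[\sk^{26}\BUct,\Sigma^{-3}\tmf]$. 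The key formal point is that $\tilde\rho\circ k=\phi$: naturality of $\eta$ gives $\eta\circ k=(k\otimes\tmf)\circ\eta$, whence
\[
\tilde\rho\circ k=\phi_{\tmf}\circ r\circ\eta\circ k=\phi_{\tmf}\circ r\circ(k\otimes\tmf)\circ\eta=\phi_{\tmf}\circ\eta=\phi,
\]
using the splitting relation $r\circ(k\otimes\tmf)\simeq\mathrm{id}$ from Proposition~\ref{prop:splitting}.

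It then remains to evaluate $\tilde\rho$ on $\Thom_{u_0}(\sigma)$. By Lemma~\ref{lem:negative_gamma_works} and Diagram~\eqref{eq:thomified_htpy2}, together with the construction of $k$ as the map detecting the $4$-cell dual to $-c_2\cdot u$, the generator $\Thom_{u_0}(\sigma)\colon\sphere^{10}\to\sk^{26}\BUct$ factors as $k\circ w$, where $w\colon\sphere^{10}\xrightarrow{\alpha_1}\sphere^7\xrightarrow{\Thom(\epsilon)}C(\alpha_1)$ is the Thomified composite. Hence $\tilde\rho\circ\Thom_{u_0}(\sigma)=\phi\circ w$, which is precisely the composite appearing in Equation~\eqref{eq:composite}. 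To identify this element of $\pi_{13}\tmf$, I would note that $\Thom(\epsilon)$ is a coextension of $\alpha_1$ (its projection to the top cell is $\susp\epsilon=\alpha_1$) while $\phi$ is the extension of $\alpha_1$ over the bottom cell, so $\phi\circ\Thom(\epsilon)$ realizes the three-fold Toda bracket $\langle\alpha_1,\alpha_1,\alpha_1\rangle=\beta_1\in\pi_{10}\tmf$ (the indeterminacy vanishes because $\pi_7\tmf=0$). Precomposing with $\alpha_1\colon\sphere^{10}\to\sphere^7$ then yields $\beta_1\circ\alpha_1=\alpha_1\beta_1$, as required.

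Granting Proposition~\ref{prop:splitting}, everything above is formal bookkeeping in the homotopy category of $\tmf$-modules, which is why the corollary is immediate. The one place demanding care is the final identification $\phi\circ w=\alpha_1\beta_1$: one must match the construction of $\phi$ (the extension of $\alpha_1$) and of $w$ (the coextension of $\alpha_1$ precomposed with $\alpha_1$) against the defining data of $\langle\alpha_1,\alpha_1,\alpha_1\rangle$ and invoke the classical $3$-primary relation $\langle\alpha_1,\alpha_1,\alpha_1\rangle=\beta_1$. Even short of pinning the constant, since the relevant summand of $\pi_{13}\tmf_{(3)}$ is cyclic generated by $\alpha_1\beta_1$, the nonvanishing asserted in Equation~\eqref{eq:composite} forces $\phi\circ w$ to be a generator, and normalizing $\phi$ by a unit if necessary gives exactly $\alpha_1\beta_1$.
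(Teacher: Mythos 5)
Your proposal is correct and follows essentially the same route as the paper: define $\tilde\rho$ as the adjoint of the composite of the splitting $r$ from Proposition~\ref{prop:splitting} with the ($\tmf$-linearized) extension of $\alpha_1$ over $C(\alpha_1)$, then evaluate on $\Thom_{u_0}(\sigma)$ by factoring it through $C(\alpha_1)$ and invoking $\langle\alpha_1,\alpha_1,\alpha_1\rangle=\beta_1$. The only cosmetic difference is that the paper first identifies the composite $\sphere^{10}\to C(\alpha_1)$ with $\beta_1[0]\in\pi_{10}C(\alpha_1)$ and then applies the extension $o$, whereas you compute the bracket directly in $\pi_{10}\tmf$ and precompose with $\alpha_1$; these give the same element of $\pi_{13}\tmf$.
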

\begin{proof}[Proof of Corollary~\ref{cor:exists} assuming Proposition~\ref{prop:splitting}] Let $r$ and $k$ be as above. The map $\alpha_1\: \sphere^0\to\Sigma^{-3}\tmf$ extends over $C(\alpha_1)$, since $\alpha_1^2=0$.
Let $o: C(\alpha_1) \to \Sigma^{-3}\tmf$ denote an extension and let $\bar o=o\otimes \tmf$ be the associated map of $\tmf$-modules. Let $1\: \sphere \to \tmf$ be the unit map. We then define $\tilde \rho$ to be the composite
\[
\begin{tikzcd}[column sep=12]
\sk^{26}\BUct\otimes \sphere \ar[r,"1 \otimes \iota"]\ar[drr, "\tilde\rho" below]& \sk^{26}(\BUct)\otimes\tmf \ar[r,"r"] & C(\alpha_1)\otimes\tmf \ar[d,"\bar o" left]\\
& &\Sigma^{-3}\tmf.
\end{tikzcd}
\]
To show this map has the desired property, consider the homotopy commutative diagram:
\[
\begin{tikzcd}
\sk^{26}(\BUct)\otimes\tmf \ar[r,"r"] & C(\alpha_1)\otimes\tmf \ar[r,"\bar o"]\ar[l,bend right,"k\otimes \tmf" above] &\Sigma^{-3}\tmf \\
\sk^{26}(\BUct)\otimes\sphere \ar[u,"1\otimes \iota"] & C(\alpha_1)\otimes\sphere \ar[u,"1\otimes \iota"]\ar[ur,"o" below]\ar[l,bend right,"k" below] \\
\sphere^{10}\ar[u,"\Thom(\sigma)"]\ar[ur,"\beta_1\lbrack 0\rbrack" below]
\end{tikzcd}
\]
where $\beta_1\lbrack 0\rbrack$ is the image of $\beta_1\in\pi_{10}\sphere^0$ under $\sphere^0\rightarrow C(\alpha_1).$
That the lower triangle commutes is a consequence of the fact that composite 
\[\sphere^{10}\xrightarrow{\alpha_1}\sphere^7\xrightarrow{\alpha_1\lbrack 4\rbrack} C(\alpha_1)\] agrees with the Toda brack \[\langle \alpha_1,\alpha_1,\alpha_1\rangle =\beta_1.\] The map $o \circ \beta_1\lbrack 0\rbrack \in \pi_{13}\tmf$ is precisely $\alpha_1\beta_1$.
\end{proof}
The splitting is not canonical. However, given an identification $\Th{S^{10}}{-\sigma} \simeq \sphere^0\oplus \sphere^{10}$,
the class $\tilde \rho$ is uniquely determined by requiring it to restrict to $\alpha_1\beta_1$ on $\sphere^{10}$.
\begin{prop}\label{prop:uniqueness} Let $\epsilon\: S^7 \to BU{\hspace{-.6pt}}(3)$ generate $\pi_{10}BU{\hspace{-.6pt}}(3)$ and let $u_0$ be a spherical orientation for $\epsilon$. Let $\Thom_{u_0}$ be as in Definition~\ref{def:thom_sigma}. Suppose that $$\tilde \rho, \,\tilde \rho'\in\tmf^{-3}(\sk^{26}\BUct)$$ are such that 
\[\Thom_{u_0}(\epsilon)^*(\tilde \rho)=\Thom_{u_0}(\epsilon)^*(\tilde \rho')=\alpha_1\beta_1\in\pi_{13}\tmf.\] Then $\tilde \rho =\tilde \rho'$.
\end{prop}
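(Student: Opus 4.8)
The plan is to recast the assertion as an injectivity statement. Since $\Thom_{u_0}(\epsilon)^*$ is a homomorphism of abelian groups, ``equal images forces equal classes'' is precisely the statement that
\[
\Thom_{u_0}(\epsilon)^*\: \tmf^{-3}(\sk^{26}\BUct)\longrightarrow \tmf^{-3}(\sphere^{10})\simeq\pi_{13}\tmf
\]
is injective. First I would factor this restriction. By Lemma~\ref{lem:negative_gamma_works} and the description of the generator of $\pi_{10}\BUc$ in diagram~\eqref{eq:thomified_htpy2}, the map $\Thom_{u_0}(\epsilon)$ factors as $\sphere^{10}\xrightarrow{\beta_1[0]}C(\alpha_1)\xrightarrow{k}\sk^{26}\BUct$, where $k$ is the map of Proposition~\ref{prop:splitting} and $\beta_1[0]=\langle\alpha_1,\alpha_1,\alpha_1\rangle$ hits the bottom cell. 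Hence $\Thom_{u_0}(\epsilon)^*=(\beta_1[0])^*\circ k^*$, so $\ker\Thom_{u_0}(\epsilon)^*=\ker k^*$, and it suffices to study these two factors.

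For the outer factor I would compute $\tmf^{-3}(C(\alpha_1))$ from the cofiber sequence $\sphere^3\xrightarrow{\alpha_1}\sphere^0\to C(\alpha_1)\to\sphere^4$: applying $\tmf^{-3}(-)$ and using the $3$-local vanishing $\pi_7\tmf=\pi_6\tmf=0$, the $4$-cell contributes nothing and restriction to the bottom cell gives $\tmf^{-3}(C(\alpha_1))\simeq\pi_3\tmf\simeq\Z/3$, generated by the extension $o$ of $\alpha_1$. The computation in the proof of Corollary~\ref{cor:exists} identifies $o\circ\beta_1[0]=\alpha_1\beta_1$, the generator of $\pi_{13}\tmf\simeq\Z/3$; thus $(\beta_1[0])^*$ is an isomorphism $\Z/3\to\Z/3$. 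Consequently $\Thom_{u_0}(\epsilon)^*$ is injective if and only if $k^*$ is. Since Proposition~\ref{prop:splitting} makes $k^*$ split surjective onto $\tmf^{-3}(C(\alpha_1))\simeq\Z/3$, the entire statement reduces to the single claim that $\tmf^{-3}(\sk^{26}\BUct)$ has order $3$.

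The substance of the proof is therefore this order computation, which I would carry out with the Atiyah--Hirzebruch spectral sequence
\[
E_2^{p,q}=\HF{3}{p}\big(\sk^{26}\BUct;\pi_{-q}\tmf\big)\Longrightarrow\tmf^{p+q}(\sk^{26}\BUct),
\]
using the module structure $\HF{3}{*}\BUct\simeq(\HF{3}{*}\BUc)\cdot u$ with $P^1u=-c_2u$ and $P^1P^1u=0$ recorded in Subsection~\ref{subsec:cohomology_BUct}. Because $\BUct$ has cohomology in even degrees only and the $3$-local free part of $\pi_*\tmf$ lies in degrees that force odd cohomological degree, the only contributions to total degree $-3$ come from the $3$-torsion of $\pi_*\tmf$ against even cells, namely at cell dimensions $0$, $10$, and $24$. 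The bottom contribution $u\otimes\alpha_1$ at $p=0$ is a permanent cycle giving exactly $\Z/3$: it survives because the existence of $\tilde\rho$ from Corollary~\ref{cor:exists} realizes it, and independently its first potential differential is governed by $P^1u=-c_2u$ landing where $\pi_6\tmf=0$. The remaining task is to show that the $\Z/3$ at cell dimension $10$ and the torsion at cell dimension $24$ do not survive, i.e.\ that they are removed by the differentials determined by the Steenrod action together with the secondary ($\beta_1$-level, Toda-bracket) structure of $\sk^{26}\BUct$.

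This last elimination is the main obstacle. The primary differential $d_4\simeq P^1\cdot\alpha_1$ does \emph{not} by itself kill the dimension-$10$ class: a direct calculation with $P^1c_3=c_2c_3$ (using $c_1=0$ on $\BUc$) and $P^1u=-c_2u$ gives $P^1(c_3u)=0$, so the only available $d_4$ vanishes. Consequently the dimension-$10$ and dimension-$24$ contributions must be killed by higher differentials encoding the $\beta_1$-structure, and the heart of the argument is to pin these down from the cohomological data of Subsection~\ref{subsec:cohomology_BUct}. Working with the $26$-skeleton keeps this analysis finite; I would verify that truncation at degree $26$ discards no cell relevant to $\tmf^{-3}$ and that no additive extension reintroduces elements, concluding $\tmf^{-3}(\sk^{26}\BUct)\simeq\Z/3$ and hence the desired uniqueness.
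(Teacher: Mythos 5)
Your reduction is sound and is in fact equivalent to the paper's: the paper phrases uniqueness as the vanishing of $\tmf^{-3}$ of the cofiber $C'(\epsilon)$ of $\Thom_{u_0}(\epsilon)\:\sphere^7\to\sk^{26}\BUct$, which by the long exact sequence of the cofiber sequence is exactly the injectivity you are after, and your observation that split surjectivity of $k^*$ (from Proposition~\ref{prop:splitting}) reduces everything to the single claim $|\tmf^{-3}(\sk^{26}\BUct)|=3$ is correct. (A minor slip: $\epsilon$ is a map out of $S^7$, so the relevant factorization is $\sphere^{7}\xrightarrow{\alpha_1[4]}C(\alpha_1)\xrightarrow{k}\sk^{26}\BUct$ and the target of restriction is $\tmf^{-3}(\sphere^7)\simeq\pi_{10}\tmf$, not $\pi_{13}\tmf$; the outer factor is still an isomorphism of $\Z/3$'s since $o\circ\alpha_1[4]$ represents $\langle\alpha_1,\alpha_1,\alpha_1\rangle=\beta_1$.)

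The genuine gap is that the computation you correctly identify as ``the substance of the proof'' is not carried out, and the partial claims you make about it are wrong. First, $\HF{3}{10}(\BUct)$ is not a single $\Z/3$: since $\HF{3}{*}(\BUct)\simeq\Z/3[t,c_2,c_3]\cdot u$ with $|t|=2$, it is five-dimensional, spanned by $t^5u,\,t^3c_2u,\,t^2c_3u,\,tc_2^2u,\,c_2c_3u$, so a copy of $(\Z/3)^5$ must be eliminated at the $10$-cell, not one class. Second, the assertion that ``the only available $d_4$ vanishes'' because $P^1(c_3u)=0$ is false: $\HF{3}{6}(\BUct)$ is three-dimensional, and the paper computes $P^1(t^3u)=-c_2t^3u$ and $P^1(tc_2u)=t^3c_2u$, so the incoming $d_4$ from the $6$-cell is nonzero and kills exactly one of the five classes, namely $t^3c_2u\otimes\alpha_1\beta_1$. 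The remaining four must each be shown to support a nonzero $d_8$, which the paper does by computing $P^1P^1$ on $t^5u$, $t^2c_3u$, $tc_2^2u$, and $c_2c_3u$ and verifying the four images are linearly independent; this, together with the auxiliary cohomology computation of Proposition~\ref{prop:P1_module_cofiber}, is the entire content of Subsection~\ref{subsec:proof_unique} and is absent from your proposal. Until those differentials are computed (and the potential contribution of $\HF{3}{24}$ against the odd homotopy of $\tmf$ in degree $27$ is disposed of), the order-$3$ claim, and hence the uniqueness statement, is not established.
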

We will prove this in Subsection~\ref{subsec:proof_unique}. This immediately implies:
\begin{cor}\label{cor:uniqueness} Let $\sigma = \alpha_1\circ \epsilon \in \pi_{10}BU{\hspace{-.6pt}}(3).$
Then:
\begin{itemize}
\item $\sigma= \alpha_1\circ \epsilon$ generates $\pi_{10}BU{\hspace{-.6pt}}(3)$.
\item If $\tilde \rho, \, \tilde \rho' \in \tmf^{-3}(\sk^{26}\BUct)$ satisfy $$\Thom_{u_0}(\sigma)^*(\tilde \rho) =\Thom_{u_0}(\sigma)^*(\tilde \rho')=\alpha_1\beta_1\in \pi_{13}\tmf,$$

then $\tilde \rho = \tilde \rho'.$
\end{itemize}
\end{cor}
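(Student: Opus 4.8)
The plan is to deduce both assertions of Corollary~\ref{cor:uniqueness} from Proposition~\ref{prop:uniqueness} together with the sphere-level description of $\pi_{10}\BUc$ and an elementary computation in $\pi_\ast\tmf$. First I would settle the generation claim. By Lemma~\ref{lem:htpy_BU2_BU3_spheres}, a generator of $\pi_{10}BU(3)$ is represented by the composite $S^{10}\xrightarrow{\alpha_1}S^7\xrightarrow{\delta_4}BU(3)$, and by hypothesis $\epsilon$ generates the three-torsion of $\pi_7BU(3)$; after $3$-localization $\pi_7BU(3)\simeq\Z/3$, so $\epsilon$ agrees with $\delta_4$ up to a unit. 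Since $\pi_{10}S^7$ lies in the stable range, $\alpha_1$ is a co-$H$-map and precomposition $-\circ\alpha_1\colon\pi_7BU(3)\to\pi_{10}BU(3)$ is a homomorphism; hence $\sigma$, being a unit multiple of $\delta_4\circ\alpha_1$, generates $\pi_{10}BU(3)$.

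The crux is to relate $\Thom_{u_0}(\sigma)$ to $\Thom_{u_0}(\epsilon)$. Because $\sigma$ is the composite through $\alpha_1$, the virtual bundle $-\sigma$ on $S^{10}$ is $\alpha_1^\ast(-\epsilon)$, and the structure map $\Th{S^{10}}{-\sigma}\to\BUct$ factors through $\op{Th}(\alpha_1)\colon\Th{S^{10}}{-\sigma}\to\Th{S^7}{-\epsilon}$. Taking the spherical orientation of $\sigma$ to be the $\alpha_1$-pullback of $u_0$ (the natural reading of the shared symbol $u_0$ in the two statements), the Thom isomorphisms of Definition~\ref{def:thom_sigma} are compatible with $\op{Th}(\alpha_1)$, and Thomification over spheres carries $\alpha_1$ to its stable class $\alpha_1\colon\sphere^{10}\to\sphere^7$. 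This yields the key identity $\Thom_{u_0}(\sigma)=\Thom_{u_0}(\epsilon)\circ\alpha_1$ of maps $\sphere^{10}\to\sk^{26}\BUct$, so that on pullbacks $\Thom_{u_0}(\sigma)^\ast=\alpha_1\cdot\Thom_{u_0}(\epsilon)^\ast$.

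Granting this, uniqueness follows quickly. Writing $x=\Thom_{u_0}(\epsilon)^\ast(\tilde\rho)\in\tmf^{-3}(\sphere^7)\simeq\pi_{10}\tmf$, the hypothesis $\Thom_{u_0}(\sigma)^\ast(\tilde\rho)=\alpha_1\beta_1$ becomes $\alpha_1\cdot x=\alpha_1\beta_1$ in $\pi_{13}\tmf$. At the prime $3$ one has $\pi_{10}\tmf\simeq\Z/3\{\beta_1\}$ and $\pi_{13}\tmf\simeq\Z/3\{\alpha_1\beta_1\}$, and multiplication by $\alpha_1$ is an isomorphism between them; hence $x=\beta_1$, and identically $\Thom_{u_0}(\epsilon)^\ast(\tilde\rho')=\beta_1$. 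Proposition~\ref{prop:uniqueness} then gives $\tilde\rho=\tilde\rho'$.

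The main obstacle is the middle step: justifying the functoriality identity $\Thom_{u_0}(\sigma)=\Thom_{u_0}(\epsilon)\circ\alpha_1$ at the level of oriented Thom spectra. This requires verifying that the single symbol $u_0$ genuinely names compatible spherical orientations for $\epsilon$ and $\sigma$---that is, that the orientation data pulls back correctly along $\alpha_1$---so that the two Thom isomorphisms of Definition~\ref{def:thom_sigma} fit into a square commuting with $\op{Th}(\alpha_1)$, and that Thomification restricted to top cells recovers the stable $\alpha_1$. Once this bookkeeping is in place, the homomorphism property of $-\circ\alpha_1$ and the computation that $\cdot\,\alpha_1\colon\pi_{10}\tmf\to\pi_{13}\tmf$ is an isomorphism are both routine.
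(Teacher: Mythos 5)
Your treatment of the second bullet is essentially the paper's own argument: both hinge on the identity $\Thom_{u_0}(\sigma)=\alpha_1\cdot\Thom_{u_0}(\epsilon)$, then use that multiplication by $\alpha_1$ carries the generator $\beta_1$ of $\pi_{10}\tmf\simeq\Z/3$ to the generator $\alpha_1\beta_1$ of $\pi_{13}\tmf\simeq\Z/3$ (so is injective) to conclude $\Thom_{u_0}(\epsilon)^*(\tilde\rho)=\beta_1=\Thom_{u_0}(\epsilon)^*(\tilde\rho')$, and finish by Proposition~\ref{prop:uniqueness}. The paper simply asserts the compatibility of orientations under precomposition with $\alpha_1$; you correctly identify this as the step needing justification and sketch it via functoriality of Thomification and pullback of the spherical orientation along $\alpha_1$, which is a welcome elaboration rather than a divergence.

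Where you genuinely differ is the first bullet. You derive generation of $\pi_{10}BU(3)$ from Lemma~\ref{lem:htpy_BU2_BU3_spheres} (that $\delta_4\circ\alpha_1$ generates) together with the co-$H$ property of $\alpha_1$ in the stable range. Be careful: the paper explicitly flags Lemmas~\ref{lem:htpy_BU2_BU3_spheres} and \ref{lem:htpy_BU3_sphere2} as motivational, omits their proofs, and states that they \emph{follow from} the proof of Theorem~\ref{thm:tmf_class_exists} --- of which this corollary is a constituent. So leaning on that lemma here is circular within the paper's logical architecture (though it could be made independent by citing the Mimura--Toda computations directly). The paper's route is both shorter and self-contained: since $\Thom_{u_0}(\sigma)^*(\tilde\rho)=\alpha_1\beta_1\neq 0$, the class $\sigma$ is nonzero, and any nonzero element of the cyclic group $\pi_{10}BU(3)\simeq\Z/3$ generates. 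Indeed, this is precisely how the paper extracts Lemma~\ref{lem:htpy_BU2_BU3_spheres} as a consequence rather than an input. I would replace your first paragraph with that one-line argument; everything else stands.
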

\begin{proof} Note that the orientation $u_0$ for $\epsilon$ gives an orientation $u_0$ for $\sigma$ such that $$\alpha_1\cdot \Thom_{u_0}(\epsilon)=\Thom_{u_0}(\sigma).$$

For the second item, suppose that $\rho',\rho$ both satisfy
\begin{align*}\Thom(\sigma)^*(\tilde \rho) &= \alpha_1\beta_1=\Thom(\sigma)^*(\tilde \rho')\end{align*}
in $\pi_*\tmf$. This implies:
\begin{align*} \Thom(\epsilon)^*(\tilde \rho) &=\beta_1 = \Thom(\epsilon)^*(\tilde \rho')
, \end{align*}
so, by Proposition~\ref{prop:uniqueness},
$\tilde \rho =\tilde \rho' \in \tmf^{-3}\left(\sk^{26}\BUct\right)$.

For the first item, note that $\Thom(\sigma)^*(\tilde \rho) \neq 0$, which implies $\sigma \neq 0$. Since $\pi_{10}BU{\hspace{-.6pt}}(3)$ is cyclic, this implies $\sigma$ generates.
\end{proof}
Together, Corollary~\ref{cor:exists} and Corollary~\ref{cor:uniqueness} imply Theorem~\ref{thm:tmf_class_exists}. It remains to prove Propositions~\ref{prop:splitting} and \ref{prop:uniqueness}. These are the projects of the next subsections.

\subsecl{Proof of Proposition~\ref{prop:splitting}}{subsec:proof_exists}

By a skeleton of $\BUct$ we mean a term in a filtration of $\BUct$ obtained by Thomifying a skeletal filtration of $\BUc$. Since $\BUc$ is finite type and has even cohomology, $\BUc$ has a cell structure filtering the space as in Diagram~\eqref{eq:filt_BUc} below.
\begin{equation}\label{eq:filt_BUc}
\begin{tikzcd}[row sep = 7]
& \vdots\\
\vee_{j \in H_{i+2}} S^{i+3}\ar[r,"\vee \bar c_{i+2,j}"] & \sk^{i+2}\BUc \ar[u] \\
\vee_{j \in H_{i}}S^{i+1} \ar[r,"\vee \bar c_{ij}"] & \sk^{i}\BUc \ar[u]\\
&\vdots \ar[u] &,
\end{tikzcd}
\end{equation}
where each $H_{i}$ above is a finite indexing set and $\sk^{i+2}\BUc$ is obtained by taking the cofiber of a map $\vee \bar c_{ij}\:\vee_{j \in H_{i}}S^{i+1} \to \sk^{i}\BUc.$

Each $\sk^i\BUc$ carries a bundle pulled back from $\gamma_3$ on $\BUc$.
We can Thomify all diagrams involved to obtain a filtration for $\BUct$: each stage in Diagram~\eqref{eq:filt_BUc} gives pushout in spectra
\begin{equation}\label{eq:skeleta_Thom_BSU3}
\begin{tikzcd}
* \ar[r] & \sk^{i+2}\BUct \\
\oplus_{H_{i}}\sphere^{i+1}\ar[u]\ar[r,"\oplus_{H_i}c_{ij}"]& \sk^{i}\BUct\,. \ar[u]
\end{tikzcd}\end{equation}
In Diagram~\ref{eq:skeleta_Thom_BSU3}, we define $\sk^i\BUct := \Th{\sk^i\BUc}{\smallminus\gamma_3}$ and $c_{ij}$ is the Thomification of $ \bar c_{ij}$ restricted to $\sphere^{i+1}$.

Consider the cofiber $$C:=\operatorname{cof}\left( C(\alpha_1)\xrightarrow{k} \sk^{26}\BUct\right).$$
\begin{lem}\label{lem:zero_maps}With notation as above, $\pi_0\operatorname{Maps}_{\tmf}(\Sigma^{-1}C\otimes \tmf, C(\alpha_1)\otimes \tmf) =0.$
\end{lem}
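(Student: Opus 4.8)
We must show that the $\tmf$-module mapping space $\pi_0\operatorname{Maps}_{\tmf}(\Sigma^{-1}C\otimes\tmf, C(\alpha_1)\otimes\tmf)$ vanishes, where $C = \operatorname{cof}(C(\alpha_1)\xrightarrow{k}\sk^{26}\BUct)$. This is the key vanishing that will force the splitting $r$ in Proposition~\ref{prop:splitting}: once this group is zero, the obstruction to extending the identity $C(\alpha_1)\otimes\tmf\to C(\alpha_1)\otimes\tmf$ across the cofiber sequence $C(\alpha_1)\otimes\tmf\to \sk^{26}\BUct\otimes\tmf\to C\otimes\tmf$ lives in exactly this group and hence must die.

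**Plan of attack.** The plan is to reduce the $\tmf$-module mapping space to a purely algebraic computation via a universal coefficient / Adams-type spectral sequence. First I would use that $C(\alpha_1)\otimes\tmf$ is a $\tmf$-module built from the unit, so by adjunction $\operatorname{Maps}_{\tmf}(\Sigma^{-1}C\otimes\tmf, C(\alpha_1)\otimes\tmf)$ is computed by a spectral sequence whose input is $\Ext$ over the homotopy of $\tmf\otimes\tmf$ (or, more tractably, the $\tmf$-homology of the relevant spectra as comodules over the appropriate dual). Concretely, since everything is $3$-local and $C(\alpha_1)\otimes\tmf$ has well-understood homotopy, I would run the $\tmf$-module universal coefficient spectral sequence
\[
\operatorname{Ext}^{s,t}_{\pi_*\tmf}\big(\tmf_*(\Sigma^{-1}C),\,\pi_*(C(\alpha_1)\otimes\tmf)\big)\Rightarrow \pi_{t-s}\operatorname{Maps}_{\tmf}(\Sigma^{-1}C\otimes\tmf, C(\alpha_1)\otimes\tmf).
\]
The strategy is then to show that in total degree $0$ there are no contributions: both the $\operatorname{Ext}^0$ (Hom) term and all higher $\operatorname{Ext}^s$ terms in the relevant diagonal vanish, for connectivity and sparseness reasons.

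**Key steps.** The essential inputs are the cohomology calculations of Subsection~\ref{subsec:cohomology_BUct}, which identify $\HF{3}{*}\BUct$ as a module over the Steenrod algebra, and in particular pin down the low-dimensional cells of $\sk^{26}\BUct$ together with the $P^1$-action. From the cofiber sequence defining $C$, I would first compute $\tmf_*(\Sigma^{-1}C)$ (equivalently $\tmf_*C$ shifted), using that $k$ hits precisely the cell dual to $-c_2\cdot u$ and realizes the bottom $C(\alpha_1)$ as a summand in cohomology. The point is that after smashing with $\tmf$, the bottom $\alpha_1$-cell of $\BUct$ is ``used up'' by $k$, so $\tmf_*(\Sigma^{-1}C)$ is concentrated in a range of degrees that does not pair with $\pi_*(C(\alpha_1)\otimes\tmf)$ in a way producing total degree $0$. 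I would then check degree-by-degree, using the known structure of $\pi_*\tmf$ at the prime $3$ (sparse, with $\alpha_1,\beta_1$ and their towers in specific degrees) together with the connectivity of $C$, that every potential $\operatorname{Ext}$-contribution to $\pi_0$ is forced to zero.

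**Main obstacle.** The hardest part will be controlling the higher $\operatorname{Ext}$ terms: $\pi_*\tmf$ is not a field (not even a PID in the relevant range), so $\operatorname{Ext}^s_{\pi_*\tmf}$ for $s>0$ can be nonzero and could in principle contribute to total degree $0$ via the filtration. I expect the resolution to come from the sparseness of both sides — $\tmf_*(\Sigma^{-1}C)$ being concentrated away from the degrees where $\pi_*(C(\alpha_1)\otimes\tmf)$ is nontrivial — so that the bidegrees $(s,t)$ with $t-s=0$ simply have empty $\operatorname{Ext}$ groups. Making this precise requires the explicit identification of the cells of $\sk^{26}\BUct$ from Subsection~\ref{subsec:cohomology_BUct} and a careful bookkeeping of where $\beta_1$-multiplications in $\pi_*\tmf$ could create unexpected classes; I would lean on the fact that $C(\alpha_1)\otimes\tmf$ has homotopy that is a free module on two generators (in degrees $0$ and $4$) over the image of $\pi_*\tmf$, which makes the $\operatorname{Hom}$ and $\operatorname{Ext}$ computations concrete enough to verify the vanishing in the single total degree we care about.
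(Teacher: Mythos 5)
Your overall framing is reasonable, but there is a genuine gap at the heart of the argument: the vanishing is \emph{not} a sparseness phenomenon, and the claim that ``$\tmf_*(\Sigma^{-1}C)$ is concentrated in a range of degrees that does not pair with $\pi_*(C(\alpha_1)\otimes\tmf)$ in a way producing total degree $0$'' is false. The cofiber $C$ only deletes the cells of $\sk^{26}\BUct$ dual to $u$ and $c_2\cdot u$ (degrees $0$ and $4$); the remaining cohomology $\HF{3}{*}(C)$ still contains classes in every even degree from $6$ through $26$, so $\Sigma^{-1}C$ has cells in every odd degree from $5$ through $25$. Since $\pi_*(C(\alpha_1)\otimes\tmf)$ has the odd classes $\alpha_1[4]$ in degree $7$ and $\alpha_1\beta_1[4]$ in degree $17$, the total-degree-$0$ line of any such spectral sequence has genuinely nonzero input: in the paper's Atiyah--Hirzebruch computation there are $2$ contributing generators paired against degree $7$ and $12$ against degree $17$. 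No connectivity or degree bookkeeping makes these go away.

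The actual content of the paper's proof is the computation of differentials killing these classes. After using the free--forgetful adjunction to rewrite the group as $\pi_0\operatorname{Maps}_{\sphere}(\Sigma^{-1}C, C(\alpha_1)\otimes\tmf)$ (a cleaner reduction than your proposed universal coefficient spectral sequence over $\pi_*\tmf$, which would face exactly the same issue plus the higher $\operatorname{Ext}$ problems you flag), one runs the Atiyah--Hirzebruch spectral sequence and shows that every class on the $p+q=0$ diagonal either supports or receives a nonzero differential. The $d_4$'s are controlled by the $P^1$-action on $\HF{3}{*}(C)$ (via the formula $P^1(t^lc_2^ic_3^j\cdot u) = l\,t^{l+2}c_2^ic_3^j\cdot u + (i+j-1)\,t^lc_2^{i+1}c_3^j\cdot u$ from Subsection~\ref{subsec:cohomology_BUct}), and the $d_8$'s are controlled by $P^1P^1$ together with the Toda bracket $\langle\alpha_1,\alpha_1,\alpha_1\rangle=\beta_1$. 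Your proposal never engages with this differential computation, which is where essentially all of the work lies; without it the argument does not close.
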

Given this lemma, the going around map $\Sigma^{-1}C\otimes \tmf \to C(\alpha_1)\otimes \tmf$ is null and there is an extension making Diagram~\ref{eq:section_exists_2} homotopy commutative, which is the desired section.
\begin{equation}\label{eq:section_exists_2}
\begin{tikzcd}
\Sigma^{-1}C\otimes \tmf \ar[r,"0"] & C(\alpha_1)\otimes \tmf \ar[r,"k"] \ar[d,"="]&\sk^{26}\BSUt\otimes \tmf \ar[dl,dashed,"\exists r"] \\
& C(\alpha_1)\otimes \tmf
\end{tikzcd}
\end{equation}

\begin{proof}[Proof of Lemma~\ref{lem:zero_maps}]

Using the free-forgetful adjunction for $\tmf$-modules:
$$\pi_0\operatorname{Maps}_{\tmf}(\Sigma^{-1}C \otimes \tmf, C(\alpha_1)\otimes \tmf) \simeq \pi_0\operatorname{Maps}_\sphere(\Sigma^{-1}C, C(\alpha_1)\otimes \tmf).$$
To establish the result, we compute $\pi_0\operatorname{Maps}_\sphere(\Sigma^{-1}C, C(\alpha_1)\otimes \tmf)$ via an Atiyah--Hirzebruch spectral sequence
\begin{equation}\label{eq:AHSS_maps}E_{2}^{q,p}=H^p\left(\Sigma^{-1}C; \left(\tmf\right)_{-q}C(\alpha_1)\right)\implies \pi_{p+q}\operatorname{Maps}_{\sphere}\left(\Sigma^{-1}C, C(\alpha_1)\otimes \tmf\right).\end{equation}
We use grading conventions indicated in Figure~\ref{fig:AHSS_schematic} and we depict the spectral sequence in Figure~\ref{fig:tmf_hom_maps}. Explicitly, the term in the $q$-th position on the horizontal axis and the $p$-th position on the vertical axis is the term $E_2^{q,p}$. The $r$-th differential in position $(q,p)$ is a map $d_r^{(q,p)}\: E_2^{q,p} \to E_r^{q-r+1,p+r}$.
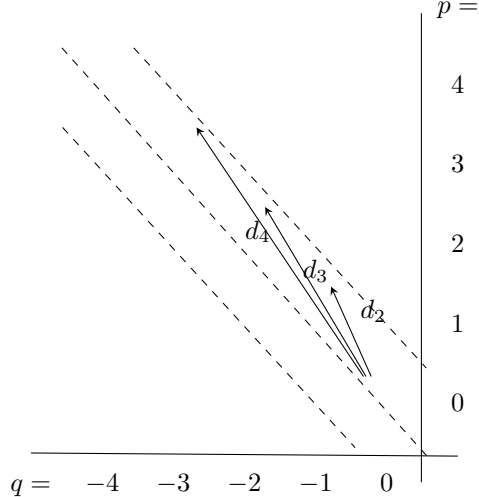
\begin{figure}[h]
\centering
\textbf{Schematic grading convention for the Atiyah--Hirzebruch spectral sequence.}\par\medskip
\begin{tikzpicture}
\matrix (m) [matrix of math nodes,
nodes in empty cells, nodes={minimum width=6ex,
minimum height=6ex},
column sep=.25ex,row sep=.25ex]{
& & & & & & p= &\\
& & \node[](d){};& & & & 4 &\\
& & & \node[] (c){};& & & 3 &\\
& & & &\node[] (b){}; & & 2 & \\
& & & & & & 1 \\
& & & & & \node[] (a){} ; & 0 & \\
q= & -4 & -3 & -2 & -1 & 0 & \\};
\draw[dashed] (m-1-1) -- (m-7-7);
\draw[dashed] (m-1-2) -- (m-6-7);
\draw[dashed] (m-2-1) -- (m-7-6);
\draw[] (m-1-6.east) -- (m-7-6.east);
\draw[] (m-7-1.north) -- (m-7-7.north) ;
\draw [arrow] (a) -- node[anchor=south west]{$d_2$} (b);
\draw [arrow] (a) -- node[anchor=south]{$d_3$} (c);
\draw [arrow] (a) -- node[anchor=south east]{$d_4$} (d);
\end{tikzpicture}\caption{Dotted lines indicate diagonals $p+q=i$ which converge to an associated graded of the $i$-th homotopy; the direction of the first few differential are
indicated. }\label{fig:AHSS_schematic}\end{figure}

We need to understand some aspects of $\HF{3}{*}(\Sigma^{-1}C)$. From Proposition~\ref{prop:cohomology_BUct}, whose proof we defer to the next subsection, we have that
$$\HF{3}{*}(\BUct)\simeq \Z/3[t,c_2,c_3]\cdot u$$ as a module over the Steenrod algebra, where $|t|=2,$ $|c_2|=4$, $|c_3|=6$. We have drawn the $P^1$-action on those classes which are not multiples of $t$ in Diagram~\ref{fig:P1_mod_BUct}. Note that $k^*$
is surjective, so $\HF{3}{*}(C)$ can be identified with a submodule of $\HF{3}{*}(\BUct)$ consisting of all elements except $\Z/3$-multiples of $u$ and $c_2\cdot u$.
\begin{figure}[h]
\centering
\textbf{$P^1$-module structure of $\BUct$ through degree $18$, excluding generators divisible by $t$.}\par\medskip
\begin{tikzpicture}
\matrix (m) [matrix of math nodes,
nodes in empty cells, nodes={minimum width=5ex,
minimum height=5ex},
column sep=.25ex,row sep=.25ex]{
22 && & & & & & & c_2c_3^3\cdot u \\
20&&
& & & &c_2^2c_3^2 \cdot u \\
18&& & & c_2^3c_3\cdot u & & & & c_3^3\cdot u \\
16&&c_2^4\cdot u & & & & c_2c_3^2 \cdot u & & \\
14&& & & c_2^2c_3\cdot u & & & & \\
12&&c_2^3\cdot u & & & & c_3^2 \cdot u & & \\
10&& & & c_2 c_3\cdot u & & & & \\
8&& c_2^2\cdot u& & & & & & \\
6&& & & c_3\cdot u & & & & \\
4&&c_2\cdot u & & & & & & \\
2& & \\
0 & & u \\ };
\draw[dashed] (m-4-3) -- (m-6-3);
\draw[dashed] (m-6-3) -- (m-8-3);
\draw[dashed] (m-10-3) -- (m-12-3);
\draw[dashed] (m-3-5) -- (m-5-5);
\draw[dashed] (m-5-5) -- (m-7-5);
\draw[dashed] (m-2-7) -- (m-4-7);
\draw[dashed] (m-4-7) -- (m-6-7);
\draw[dashed] (m-1-9) -- (m-3-9);
\end{tikzpicture}\caption{The left column is the degree of the cohomology class. Dotted lines indicate $\pm \alpha_1$ attaching maps detected by a $P^1$ in $\HF{3}{*}\BUct$. We omit classes above degree 18 that do not attach to cells at or below 18. }\label{fig:P1_mod_BUct}\end{figure}
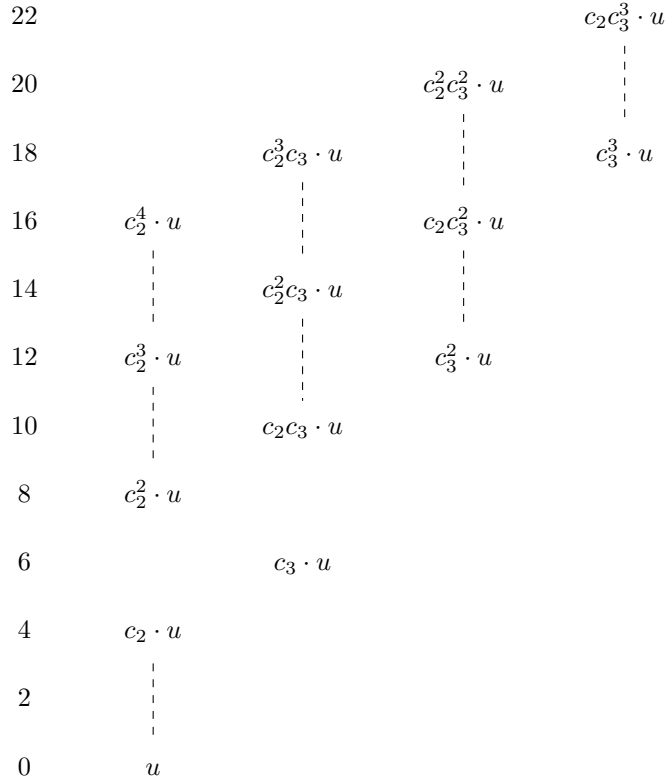 Given a class $t^lc_2^ic_3^j\cdot u \in \HF{3}{*}(C)$, we refer the reader to Proposition~\ref{cor:P1_u_classes} to deduce that
\begin{equation}\label{eq:P1_generators} P^1(t^lc_2^ic_3^j\cdot u) =l(t^{l+2}c_2^ic_3^j\cdot u) + (i+j-1)t^lc_2^{i+1}c_3^j \cdot u. \end{equation}

Since $\HF{3}{*}(\Sigma^{-1}C)$ has odd cohomology, terms on the diagonal $p+q=0$ on the $E_2$-page of the Atiyah--Hirzebruch spectral sequence of Equation~\ref{eq:AHSS_maps} arise from odd elements in $\pi_*C(\alpha_1)\otimes \tmf$. In the relevant range, we can describe
$\pi_*(C(\alpha_1)\otimes \tmf)$ as follows:

$$\pi_{* \leq 26}(C(\alpha_1)\otimes \tmf) \simeq \Z/3\cdot \{1, \alpha_1[4], \alpha_1\beta_1[4],\beta_1[0], \beta^2[0]\}\oplus H,\,\, \text{ where}$$
\begin{itemize}
\item $H$ has only even terms (arising from the classes $c_2$ and $c_4$ in $\pi_*\tmf$) and these terms support no $\alpha_1$ or $\beta_1$ multiplications, so can be disregarded for our calculation.
\item The $\pi_*\sphere$-module structure is given by:
\begin{align*}
\alpha_1\cdot(\alpha_1[4])&=\beta_1[0] &\beta_1\cdot 1& =\beta_1[0] & \\
\alpha_1\cdot (\alpha_1\beta_1[4])&=\beta_1^2[0] &\beta_1\cdot(\beta_1[0])&=\beta_1^2[0] \\ \beta_1\cdot(\alpha_1[4])&=\alpha_1\beta_1[4],
\end{align*}
and all other multiplications are zero.
\item The degree of an indicated class is the degree of the corresponding class in $\pi_*\tmf$ plus the number in the bracket.\footnote{These elements are named by the classes that detect them on the $E_2$-page of the Atiyah--Hirzebruch spectral sequence computing $\pi_*(C(\alpha_1)\otimes \tmf))$.} So, for example, $\alpha_1[4]$ has degree $7$.
\end{itemize}
The only odd classes in $\pi_{*\leq 26}\left(C(\alpha_1)\otimes \tmf\right)$ are $\alpha_1\beta_1[4]$ and $\alpha_1[4]$, so contributions
on the $E_2$-page are in bidegrees $(-17,17)$ and $(-7,7)$.
First consider the classes which are not multiples of $t$:
$$\alpha_1[4]\otimes (c_2^2u), \,\,\, \alpha_1\beta_1[4]\otimes (c_2^3c_3\cdot u), \,\,\, \alpha_1\beta_1[4]\otimes (c_3^3\cdot u).$$
By inspection of Figure~\ref{fig:P1_mod_BUct} and the fact that $\langle \alpha_1,\alpha_1,\alpha_1\rangle =\beta_1$ we see that
\begin{align*}
d_4(\alpha_1[4]\otimes c_2^2\cdot u)&=\beta_1[0]\otimes c_2^3\cdot u\\
d_4(\alpha_1\beta_1[4]\otimes c_3^3\cdot u) &=-\beta_1^2[0]\otimes c_2c_3^3\cdot u\\
\alpha_1\beta_1[4]\otimes c_2^3c_3\cdot u&=d_8(\beta_1[0]\otimes c_2c_3\cdot u)
\end{align*}

In degree $7$ we have
$$t^4\cdot u \,\,\,\,\,\,  \,\,\,\,\,\, c_2^2\cdot u.$$
By Equation~\eqref{eq:P1_generators} we have:
\begin{align*}P^1(t^4 \cdot u)&=t^6\cdot u -t^4c_2\cdot u\\
P^1(c_2^2\cdot u) &= c_2^3\cdot u\end{align*}
Therefore:
\begin{align*}
d_4(\alpha_1[4]\otimes c_2^2\cdot u)&=\beta_1[0]\otimes c_2^3\cdot u\\
d_4(\alpha_1[4]\otimes t^4\cdot u)&=\beta_1[0]\otimes(t^6\cdot u -t^4c_2\cdot u).\end{align*}
Thus the cell $(-7,7)$ does not contribute to the $E_\infty$-page.

In degree $17$ we have generators:
\begin{align*}
t^9 \cdot u & & t^7 c_2\cdot u & & t^6 c_3\cdot u & & t^5 c_2^2 \cdot u\\
t^4 c_2c_3\cdot u & & t^3 c_2^3 \cdot u && t^3 c_3^2 \cdot u & & t^2 c_2^2c_3 \cdot u\\
tc_2^4\cdot u & & tc_2c_3^2\cdot u & & c_2^3c_3\cdot u & & c_3^3\cdot u \\
\end{align*}
Using Equation~\ref{eq:P1_generators}, we see that:

The image of $P^1$ on the generators for $H^{17}(\Sigma^{-1}C)$ listed above is:
\begin{align*}
-t^9c_2 \cdot u & & t^9 c_2\cdot u & & 0 & & -t^7 c_2^2 \cdot u+t^7c_2^3\cdot u\\
t^6 c_2c_3\cdot u+t^4c_2^2c_3\cdot u & & -t^3 c_2^4 \cdot u && t^3 c_2c_3^2 \cdot u & & -t^4 c_2^2c_3 \cdot u -t^2c_2^3c_3\cdot u\\
t^3c_2^4\cdot u & & t^3c_2c_3^2\cdot u-tc_2^2c_3^2\cdot u & & 0 & & -c_2c_3^3\cdot u \\
\end{align*}

Since $d_4(\alpha_1\beta_1[4]\otimes x)=\beta_1^2\otimes P^1(x),$ we see that $\ker(d_4)\: E_4^{(-17,17)} \to E_4^{(-20,21)}$ is generated by $\alpha_1\beta_1$ tensored with:

\begin{align*}
t^9\cdot u+t^7c_2\cdot u & & t^6c_3\cdot u \\ tc_2^4\cdot u + t^3c_2^3\cdot u && c_2^3c_3\cdot u
\end{align*}
The next differential involved is a $d_8$ with source (-10,9), which is computed as $$d_8(\beta_1[0]\otimes y)=\alpha_1\beta_1[4]\otimes P^1P^1(y).$$ Using Equation~\eqref{eq:P1_generators}, we get:
\begin{align*}
P^1\left(P^1(t^5\cdot u)\right)&=P^1\left(P^1(t^5)\right)\cdot u -P^1(t^5)P^1(u) + t^5P^1\left(P^1(u)\right)\\
&= (7*5)t^9\cdot u -(5t^7)(-c_2\cdot u) + t^5\cdot 0=-(t^9\cdot u +t^7c_2\cdot u),\\
P^1\left(P^1(tc_2^2 u)\right)&=P^1\left(P^1(t)\right)\cdot u -P^1(t)P^1(c_2^2u) + tP^1\left(P^1(c_2^2u)\right)\\
&= -(t^3c_2^3\cdot u +tc_2^4\cdot u),\\
P^1\left(P^1(t^2c_3\cdot u)\right)&=P^1\left(P^1(t^2)\right)\cdot u -P^1(t^2)P^1(c_3\cdot u) + t^2P^1\left(P^1(c_3\cdot u)\right)= -t^6c_3\cdot u, \\
\end{align*} since $P^1(c_2\cdot u)=0$. And similarly:
\begin{align*}
P^1\left(P^1(c_2c_3\cdot u)\right)&=P^1\left(P^1(c_2)\right)\cdot u -P^1(c_2)P^1(c_3\cdot u) + c_2P^1\left(P^1(c_3\cdot u)\right)= -c_2^3c_3\cdot u,
\end{align*}
This shows that $\operatorname{gr}\big(\pi_0(\operatorname{Maps}_\sphere(\Sigma^{-1}C, C(\alpha_1)\otimes \tmf)\big)=0,$ which implies the group itself is zero.
\begin{figure}[h]
\centering
\textbf{The relevant part of the $E_2$-page of the Atiyah--Hirzebruch spectral sequence computing $\pi_*\operatorname{Maps}_{\sphere}(\Sigma^{-1}C,C(\alpha_1)\otimes \tmf).$}\par\medskip
\begin{tikzpicture}
\matrix (m) [matrix of math nodes,
nodes in empty cells,nodes={ minimum width=4ex,
minimum height=3ex},
column sep=.1ex,row sep=.1ex]{
& & & & &&&&&&&&&& &\node[](f){}; \\
& \node[circle,draw,yscale=.2,xscale=.2] {}; \node[circle,draw,yscale=.3,xscale=.3] (b){};&&&\node[circle,draw,yscale=.2,xscale=.2] {}; &&&&&&&\node[circle,draw,yscale=.2,xscale=.2] {}; &&&\node[circle,draw,yscale=.2,xscale=.2] {}; &&21
\\ 
\\ 
& \node[circle,draw,yscale=.2,xscale=.2] {}; &&&\node[circle,draw,yscale=.2,xscale=.2] {}; &&&&&&&\node[circle,draw,yscale=.2,xscale=.2] {}; &&&\node[circle,draw,yscale=.2,xscale=.2] {}; &&19
\\ 
& \\ 
& \node[circle,draw,yscale=.2,xscale=.2] {}; &&&\node[circle,draw,yscale=.2,xscale=.2] {}; \node[rectangle,draw,yscale=.5,xscale=.8](a){};&&&&&&&\node[circle,draw,yscale=.2,xscale=.2] {}; &&&\node[circle,draw,yscale=.2,xscale=.2] {}; &&17
\\ 
\\ 
& \node[circle,draw,yscale=.2,xscale=.2] {}; &&&\node[circle,draw,yscale=.2,xscale=.2] {}; &&&&&&&\node[circle,draw,yscale=.2,xscale=.2] {}; &&&\node[circle,draw,yscale=.2,xscale=.2] {}; &&15
\\ 
& & \\
& \node[circle,draw,yscale=.2,xscale=.2] {}; &&&\node[circle,draw,yscale=.2,xscale=.2] {}; &&&&&&&\node[circle,draw,yscale=.2,xscale=.2] {}; &&& \node[circle,draw,yscale=.2,xscale=.2] {};& &13 %
\\ 
& & \\
& \node[circle,draw,yscale=.2,xscale=.2] {}; &&&\node[circle,draw,yscale=.2,xscale=.2] {}; &&&&&&&\node[circle,draw,yscale=.2,xscale=.2](e) {}; &&&\node[circle,draw,yscale=.2,xscale=.2] {}; &&11
\\ 
& \\
& \node[circle,draw,yscale=.2,xscale=.2] {}; &&&\node[circle,draw,yscale=.2,xscale=.2] {}; &&&&&&&\node[circle,draw,yscale=.2,xscale=.2](c) {}; &&& \node[circle,draw,yscale=.2,xscale=.2] {};&&9
\\ 
& & \\ 
& \node[circle,draw,yscale=.2,xscale=.2] {}; &&&\node[circle,draw,yscale=.2,xscale=.2] {}; &&&&&&&\node[circle,draw,yscale=.2,xscale=.2] {}; &&& \node[circle,draw,yscale=.2,xscale=.2](d) {}; \node[rectangle,draw,yscale=.5,xscale=.8]{};&&7
\\ 
\node[](h){}; & & & & & & & & & & & & & & & & \node[](i){};\\
\quad\strut -q & 20& 19&18 &17 & 16& 15& 14& 13 & 12 & 11 & 10 & 9& 8 & 7 &\node[](g) {}; \strut \\
\quad\strut &\smallbetasq &&& {\smallalphabeta} & && && & &\smallbeta & &&\smallalpha & \strut \\};
\draw [arrow,dashed] (a) -- node[anchor=west]{$d_4$} (b);
\draw[arrow,dashed](c) --node[anchor=west]{$d_8$}(a);
\draw[arrow,dashed](d) --node[anchor=west]{$d_4$}(e);
\draw[](f) --(g);
\draw[](h)--(i);
\end{tikzpicture}\caption{The $x$-axis is graded by $q$ with the generator of $\pi_{-q}(C(\alpha_1)\otimes \tmf)$. The $y$-axis is graded by the degree of generators for $\HF{3}{p}(\Sigma^{-1}C)$. A circle in degree $(p,-q)$ indicates a non-zero three-torsion group. Rectangles mark the bi-degrees that can contribute to the $p+q=0$ line. We indicate the differentials that eliminate the terms on $p+q=0$.}\label{fig:tmf_hom_maps}\end{figure}
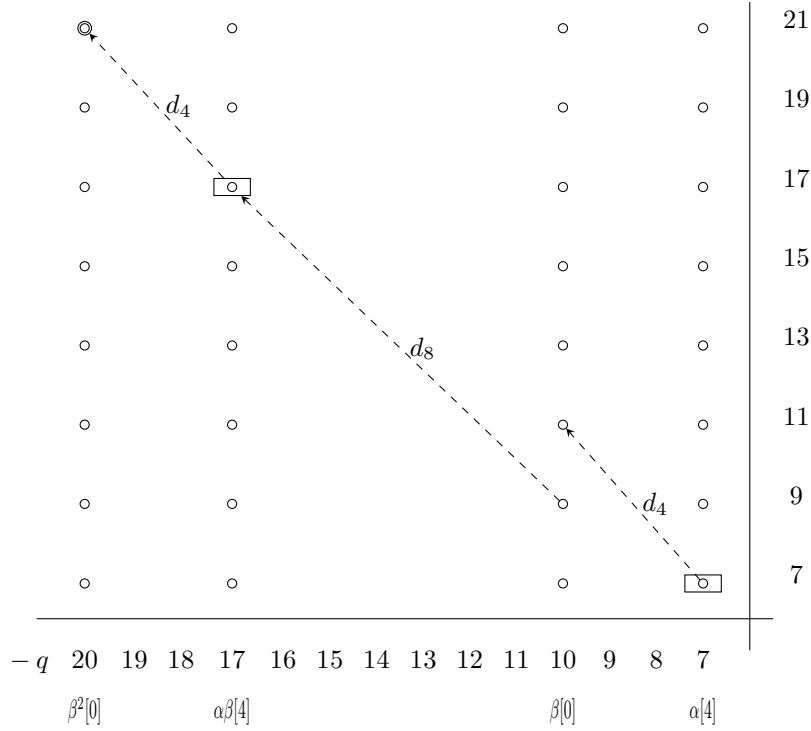
\end{proof}
\begin{rmk} One might hope that $C(\alpha_1)\otimes \tmf$ split from $\sk^i\BUct \otimes \tmf$ for $i>26$. In addition to being a cleaner result, such an extension might allow us to extend the $\rho$ invariant to vector bundles on higher-dimensional spaces. Unfortunately, our approach to splitting $C(\alpha_1)\otimes \tmf$ is computationally intensive and it is not clear how far up the skeleton the splitting extends. However, from discussions with M.J. Hopkins, we believe $\tilde \rho$ can be extended to $\BUct$ by a different method. We hope to work this out in the future, although it is unnecessary for our results.
\end{rmk}

\subsecl{The cohomology of $\BUct$ and related spectra}{subsec:cohomology_BUct}

This subsection includes the main calculations needed for the proof of Theorem~\ref{thm:tmf_class_exists}. Some of these results have already been used in the previous subsection, and some are necessary for the next subsection. We begin with the cohomology of the relevant classifying space and the Thom spectrum of interest.

\begin{prop}\label{prop:cohomology_BUct}
The $P^1$-module structure on the $\pmod 3$-cohomology of $\BUc$ and $\BUct$ is given as follows:
\begin{enumerate}
\item $\HF{3}{*}(\BUc) \simeq \Z/3[t,c_2,c_3]$, where $|t|=2$, $|c_2|=4$, and $|c_3|=6$; and
\begin{align*}
P^1c_2&=c_2^2 & (P^1P^1)c_2&=2c_2^3\\
P^1t&=t^3 & (P^1 P^1)t&=0\\
P^1c_3&=c_2c_3 & (P^1P^1)c_3&=-c_2^2c_3.\\
\end{align*}
\item $\HF{3}{*}(\BUct) \simeq \HF{3}{*}(\BUc)\cdot u,$ with
\begin{align*}
P^1u&=-c_2\cdot u & (P^1 P^1)u &=0\\
\end{align*}
where we view $\HF{3}{*}(\BUc)$ as a $P^1$-algebra and therefore the $P^1$-module structure is determined by the action on $u$.
\end{enumerate}
\end{prop}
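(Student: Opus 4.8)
The plan is to compute the ring $\HF{3}{*}(\BUc)$ first, read off the $P^1$-action on it, and then transport everything through the Thom isomorphism to obtain part (2). For the ring structure, I would use the defining fibration of Definition~\ref{def:BUn_coz}: since $\BUc$ is the homotopy fiber of $c_1 \bmod 3\colon BU(3)\to K(\Z/3,2)$, there is a fibration
\[
K(\Z/3,1)\to \BUc \to BU(3),
\]
and I would run its mod-$3$ Serre spectral sequence. Writing $\HF{3}{*}(K(\Z/3,1))\simeq \Lambda(x_1)\otimes \Z/3[y_2]$ with $y_2=\beta x_1$, the class $x_1$ transgresses to $c_1 \bmod 3$, so the $d_2$-differential is multiplication by $c_1$ on the $x_1$-tower; as $c_1$ is a nonzerodivisor in $\Z/3[c_1,c_2,c_3]$ this is a Koszul-type differential with homology $\Z/3[c_2,c_3]$. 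The class $y_2=\beta x_1$ has $d_2 y_2=\beta(c_1\bmod 3)=0$, and by Kudo's transgression theorem its transgression equals $\beta\,\tau(x_1)=\beta(c_1\bmod 3)=0$, both vanishing because $c_1$ is the reduction of an integral class; hence $y_2$ is a permanent cycle. This leaves $E_\infty=\Z/3[c_2,c_3]\otimes\Z/3[y_2]$, a free graded-commutative algebra with no extension problems, giving $\HF{3}{*}(\BUc)\simeq \Z/3[t,c_2,c_3]$ with $t$ represented by $y_2$ in degree $2$ and $c_1$ pulling back to $0$.

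For the $P^1$-action in part (1), the generators $c_2,c_3$ are pulled back from $BU(3)$, so I would restrict the classical Chern-class formulas $P^1 c_2=c_1^2c_2+c_2^2-c_1c_3$ and $P^1 c_3=c_3(c_1^2+c_2)$ (already invoked in Construction~\ref{const1}) along $\BUc\to BU(3)$ and set $c_1=0$, yielding $P^1 c_2=c_2^2$ and $P^1 c_3=c_2c_3$. For the new class, instability forces $P^1 t=t^3$ since $|t|=2$. The iterated operations then follow from the Cartan formula: $(P^1P^1)c_2=P^1(c_2^2)=2c_2\cdot c_2^2=2c_2^3$; $(P^1P^1)c_3=P^1(c_2c_3)=c_2^2c_3+c_2(c_2c_3)=-c_2^2c_3$; and $(P^1P^1)t=P^1(t^3)=3t^2\cdot t^3=0$.

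For part (2), the virtual bundle $-\gamma_3$ is complex and hence mod-$3$ orientable, so the Thom isomorphism gives $\HF{3}{*}(\BUct)\simeq \HF{3}{*}(\BUc)\cdot u$ as a module over $\HF{3}{*}(\BUc)$. To compute $P^1 u$ I would use the total mod-$3$ Wu class of the Thom class: for a line bundle $L$ with $c_1=x$ one has $P^1 u_L=x^2 u_L$ (the Thom space of the universal line bundle is $\CP^\infty$ and $P^1 x=x^3$), so by the splitting principle and Whitney-sum multiplicativity the total Wu class of $\gamma_3$ is $\prod_i(1+x_i^2)$, with degree-$4$ part $\sum_i x_i^2=c_1^2-2c_2$. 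Stability identifies the total Wu class of $-\gamma_3$ with the inverse, whose degree-$4$ part is $-(c_1^2-2c_2)$; setting $c_1=0$ gives $P^1 u=2c_2\cdot u=-c_2\cdot u$. Finally $(P^1P^1)u=P^1(-c_2 u)=-(P^1 c_2)u-c_2(P^1 u)=-c_2^2 u+c_2^2 u=0$ by the Cartan formula and the part-(1) computation.

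The main obstacle I anticipate is the sign and reduction bookkeeping in the last paragraph: correctly passing from $\gamma_3$ to $-\gamma_3$ by inverting the total Wu class, and reconciling $2c_2=-c_2$ mod $3$. The spectral sequence argument is otherwise routine, with the one delicate point being the vanishing of the transgression of $y_2$, which is exactly where the integrality of $c_1$ (so that $\beta c_1=0$) is essential.
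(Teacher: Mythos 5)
Your proposal is correct and follows essentially the same route as the paper: part (1) by restricting the classical formulas for $P^1c_2$ and $P^1c_3$ along $\BUc\to BU(3)$ (where $c_1\mapsto 0$) together with instability for $t$, and part (2) by computing $P^1$ on the Thom class of the positive bundle via the splitting principle and passing to the virtual inverse --- your inversion of the total Wu class is formally identical to the paper's substitution of the Chern classes of $-\gamma$ into the universal formula $P^1u_{\gamma}=(c_1^2+c_2)u_{\gamma}$. The only addition is your Serre spectral sequence justification of the ring structure $\HF{3}{*}(\BUc)\simeq\Z/3[t,c_2,c_3]$, which the paper takes as given (a minor slip there: the transgression of $y_2=\beta x_1$ is a $d_3$, not a $d_2$, though your Kudo argument still yields the required vanishing).
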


\begin{proof}
Part (1) follows from Steenrod operations on Chern classes in $\HF{3}{*}(BU{\hspace{-.6pt}}(3))$, together with the fact the natural map $\BUc \to BU{\hspace{-.6pt}}(3)$ induces $c_1\mapsto 0$, $c_2\mapsto c_2$, and $c_3 \mapsto c_3$.

Part (2) follows from the $\HZt$-Thom isomorphism together with the universal formula for Steenrod operations on Thom classes, which we now explain. However, we will later need formulas for Steenrod operations on the Thom class for the bundle $-\gamma_4$ on $BU{\hspace{-.6pt}}(4)_{\coz}$, so we give these and then deduce those for $-\gamma_3$ on $\BUc$.

We first compute $P^1$ on the Thom class $u_{\gamma_4}$ for $\gamma_4$ on $BU{\hspace{-.6pt}}(4)$ via the universal example of $MU(1)^{\times 4}$.
\begin{align*} P^1(wxyz) &=w^3xyz+wx^3yz+wxy^3z+wxyz^3\\
& =wxyz(w^2+ x^2+y^2+z^2)\\
&=wxyz((w+x+y+z)^2-2(wx+wy+wz+xy+xz+yz)), \end{align*}
implying that
\begin{equation}\label{eq:P1c4}
P^1(u_{\gamma_4})=(c_1^2+c_2)u_{\gamma_4}.
\end{equation}
The Chern classes for $-\gamma_4$ are the coefficients of the power series inverse to
$$c({\gamma_4})=1+c_1t+c_2t^2+c_3t^3+c_4t^4.$$
\begin{align*}
\frac{1}{c(\gamma_4)}&=1-(c_1t+c_2t^2+c_3t^3+c_4t^4)+(c_1t+c_2t^2+c_3t^3+c_4t^4)^2 \\
&\,\,\,\,\,\,\,\,\,\,\,\, -(c_1t+c_2t^2+c_3t^3+c_4t^4)^3 +(c_1t+c_2t^2+c_3t^3+c_4t^4)^4+\dots \\
&= 1-c_1t+(c_1^2-c_2)t^2+(-c_1^3+2c_1c_2-c_3)t^3+(c_1^4-3c_1^2c_2+c_2^2+2c_3c_1-c_4)t^4+,\dots
\end{align*}
so that
\begin{align*}
c_1(-\gamma_4) &= -c_1
& c_2(-\gamma_4) & =c_1^2-c_2
\\c_3(-\gamma_4) & = -c_1^3+2c_1c_2-c_3
&c_4(-\gamma_4)&=c_1^4+c_2^2-c_3c_1-c_4.
\end{align*}
The above implies
\begin{align}\label{eq:P1u}P^1(u_{\smallminus\gamma_4})&=\big( (-c_1)^2 + c_1^2-c_2\big) \cdot u_{\smallminus\gamma_4}= -(c_1^2+c_2)\cdot u_{\smallminus\gamma_4}.\end{align}

We next calculate $P^1c_1^2$ and $P^1c_2$ in $H^*(BU{\hspace{-.6pt}}(4))$. For $c_1$ we have:
\begin{align*}P^1((w+x+y+z)^2)&=2(w+x+y+z)P^1(w+x+y+z)\\
&=2(w+x+y+z)(w+x+y+z)^3=2(w+x+y+z)^4, \,\, \text{ and}
\end{align*}
\begin{align}\label{P1c12}P^1(c_1^2)=-c_1^4.\end{align}
Let $s_n=s_n(w,x,y,z)$ denote the $n$-th elementary symmetric polynomial in $x,y,z,w$. The computation for $c_2$ goes as follows:
\begin{align*}
P^1(wx+wy+wz+xy+xz+yz)&=w^3x+wx^3+w^3y+wy^3+w^3z+wz^3\\ &\,\,\,\,\,\,\,\, +x^3y+xy^3+x^3z+xz^3+y^3z+yz^3\\
&=s_2(w^2+x^2+y^2+z^2)-s_3s_1+xyzw
\\
&=s_2(s_1^2+s_2) - s_3s_1 +s_4
\end{align*}
Therefore:
\begin{align}\label{eq:P1c2}
P^1(c_2)&=c_1^2c_2+c_2^2-c_1c_3+c_4
\end{align}
Combining Equations~\eqref{eq:P1u},\eqref{P1c12}, and \eqref{eq:P1c2} we get:
\begin{align*}P^1 P^1(u_{\smallminus\gamma_4})&=-\big( P^1(c_1^2+c_2)\cdot u_{\smallminus\gamma_4}+ (c_1^2+c_2)P^1(u_{\smallminus\gamma_4})\big)\\
&= -\big(-c_1^4+c_1^2c_2+c_2^2-c_1c_3+c_4 -(c_1^2+c_2)^2) \big)\cdot u_{\smallminus\gamma_4}\\
&=-(c_1^4-c_1^2c_2-c_1c_3+c_4)\cdot u_{\smallminus\gamma_4}.
\end{align*}

Now let $\tilde u$ denote the Thom class of the negative of the universal bundle on $BU{\hspace{-.6pt}}(4)_{\coz}$ and as before let $u$ denote the Thom class of $-\gamma_3$ on $\BUc$. Since the universal bundle has zero first Chern class, we get:
\begin{align}\label{eq:P1_tildeu}
P^1(\tilde u)&=-c_2\cdot \tilde u &P^1 P^1(\tilde u)&=-c_4\cdot \tilde u
\end{align}
\begin{align}\label{eq:P1_real_u}
P^1( u)&=-c_2\cdot u &P^1P^1( u)&=0.
\end{align}
This completes the proof. \end{proof}
From the above we can use the Leibniz rule for $P^1$ to derive:
\begin{cor}\label{cor:P1_u_classes} In $\HF{3}{*}(\BUct)$,
\begin{align*}
P^1(t^i\cdot u)&=it^{i+2}\cdot u -t^ic_3\cdot u\\
P^1(c_2^ic_3^j\cdot u)& = (i+j-1) c_2^{i+1}c_3^j\cdot u
\end{align*}
\end{cor}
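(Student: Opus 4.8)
The plan is to observe that $P^1$ is a graded derivation on $\mod 3$ cohomology and then to apply the Leibniz rule, feeding in only the values of $P^1$ on the multiplicative generators recorded in Proposition~\ref{prop:cohomology_BUct}. First I would recall the Cartan formula $P^n(xy)=\sum_{a+b=n}P^a(x)P^b(y)$; specializing to $n=1$ and using $P^0=\mathrm{id}$ yields $P^1(xy)=P^1(x)\,y+x\,P^1(y)$, so that $P^1$ is a derivation. Since $\HF{3}{*}(\BUct)\simeq \Z/3[t,c_2,c_3]\cdot u$, every class appearing in the statement is a monomial in $t,c_2,c_3$ times the Thom class $u$, and the derivation property reduces the entire computation to the known inputs $P^1t=t^3$, $P^1c_2=c_2^2$, $P^1c_3=c_2c_3$, and $P^1u=-c_2\cdot u$.

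With these in hand the two formulas are immediate. For the first, the Leibniz rule gives
\[
P^1(t^i\cdot u)=P^1(t^i)\,u+t^i\,P^1(u)=i\,t^{i+2}\cdot u-t^i c_2\cdot u,
\]
using $P^1(t^i)=i\,t^{i-1}P^1(t)=i\,t^{i+2}$. (I note that the term $-t^ic_3\cdot u$ in the statement should read $-t^i c_2\cdot u$; this matches the $i=j=0$ case of Equation~\eqref{eq:P1_generators}.) For the second, applying the derivation property to the three factors $c_2^i$, $c_3^j$, and $u$ and using $P^1(c_2^i)=i\,c_2^{i+1}$, $P^1(c_3^j)=j\,c_2 c_3^j$, and $P^1(u)=-c_2\cdot u$, I obtain
\[
P^1(c_2^ic_3^j\cdot u)=i\,c_2^{i+1}c_3^j\,u+j\,c_2^{i+1}c_3^j\,u-c_2^{i+1}c_3^j\,u=(i+j-1)\,c_2^{i+1}c_3^j\cdot u.
\]

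I do not expect a genuine obstacle here: all of the substance lives in Proposition~\ref{prop:cohomology_BUct}, where the action of $P^1$ on the generators is established (via the splitting principle on $MU(1)^{\times 4}$ and inverting the total Chern class). Granting that input, the corollary is a purely formal consequence of the Cartan/Leibniz formula. The only point that requires care is bookkeeping the derivation coefficients $i$ and $j$ together with the $-1$ contributed by $P^1u=-c_2\cdot u$; once these are tracked correctly the closed forms drop out, and no further operations (no $P^2$, no Bockstein) enter.
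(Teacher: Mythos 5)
Your proof is correct and is exactly the paper's argument: the corollary is stated as a direct consequence of Proposition~\ref{prop:cohomology_BUct} via the Leibniz rule, with no further input. You are also right that the first displayed formula contains a typo ($-t^ic_3\cdot u$ should be $-t^ic_2\cdot u$), as confirmed by the $i=j=0$ case of Equation~\eqref{eq:P1_generators} and by the paper's own later use $P^1(t^4\cdot u)=t^6\cdot u-t^4c_2\cdot u$.
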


To prove that $\tilde \rho$ is unique in Subsection~\ref{subsec:proof_unique}, we will need to analyze a certain cofiber. To that end, the following calculation will be useful.
\begin{prop}\label{prop:P1_module_cofiber} Let $\epsilon\: S^7 \to \BUc$ generate $\pi_7\BUc$, let $u_0$ denote the Thom class for $\epsilon$ and let 
$\Thom_{u_0}(\epsilon)\:\sphere^7 \to \BUct$ be as defined in 
Definition~\ref{def:thom_sigma}.  Let $C(\Thom_{u_0}\!\epsilon)$ denote the cofiber of $\Thom_{u_0}(\epsilon).$ 
As a module over the subalgebra of the mod $3$ Steenrod algebra generated by $P^1$,
\[\HF{3}{*}\left(C(\Thom_{u_0}\!\epsilon)\right) \simeq \HF{3}{*}\BUct \oplus \Z/3\cdot \{y\},\]
where $|y|=8$ and \begin{align*}
P^1(-c_2\cdot u)&=y\\
P^1(y)&=0.
\end{align*}
For integral cohomology, we have an isomorphism of graded abelian groups
\[H^*\left(C(\Thom_{u_0}\!\epsilon);\Z\right) \simeq H^*(\BUct;\Z) \oplus \Z \cdot \{y\},\] where $|y|=8$.
\end{prop}
\begin{proof}
Let $\iota\: BU{\hspace{-.6pt}}(3) \to BU{\hspace{-.6pt}}(4)$ denote the natural map. Because the composite $\iota \circ \epsilon\: S^7 \to BU{\hspace{-.6pt}}(4)$ is null, we get a homotopy commutative diagram
\begin{equation}\label{eq:BU3BU4}
\begin{tikzcd}[row sep=.5in, column sep=.5in]
S^7 \arrow[r,"{}"]\arrow[dr, "0"] & \BUc \arrow[d,"{\gamma_3 \oplus \underline{\C}}" near start]\arrow[r] & (\BUc)/S^7 \arrow[dl, dashed, "\delta"] \\
& BU{\hspace{-.6pt}}(4)_{\coz},&
\end{tikzcd}
\end{equation}
where $\delta$ is any extension of the bundle $\gamma_3 \oplus \underline{\C}$ to the cofiber. We get a homotopy pushout by taking Thom spectra:
\begin{equation}\label{eq:pushout_spectra}
\begin{tikzcd}
\sphere^0 \oplus \sphere^{7} \arrow[r,"p_{1}"]\arrow[d,"\Thom(\epsilon)" left]
& \sphere^0\arrow[d]\\
\BUct\arrow[r]
&\Th{(\BUc)/S^7}{-\delta},
\end{tikzcd}
\end{equation}
where $p_1$ is the projection onto the first factor. Therefore
$$C(\Thom_{u_0}\!\epsilon)\simeq\Th{(\BUc)/S^7}{-\delta}.$$
Let $T:=\BUct$ and $T_4:=\Th{BU{\hspace{-.6pt}}(4)_{\coz}}{\smallminus\gamma_4}$ below. From Equation~\eqref{eq:BU3BU4} we get a commuting diagram
\begin{equation}\label{eq:coh_G}
\begin{tikzcd}[column sep = 10]
H^{i-1}(T)\arrow[r] & H^{i-1}(\sphere^7)\arrow[r] & H^i\left(C(\Thom_{u_0}\!\epsilon)\right)\arrow[r] & H^i(T)\arrow[r] & H^i(S^7)\\
& & H^i(T_4)\arrow[ull]\arrow[ul,"0" above]\arrow[u,"a"]\arrow[ur]\arrow[urr,"0" above]
\end{tikzcd}
\end{equation}
where $H^i$ denotes either $\Z$ or $\Z/3$ coefficients and the top row is exact. The map $a$ induces a ring isomorphism
\begin{equation}\label{eq:cof_is_kinda_BU4} H^*\left(C(\Thom_{u_0}\!\epsilon)\right)/H^{>8}\simeq H^*(\Th{BU{\hspace{-.6pt}}(4)_{\coz}}{\smallminus\gamma_4})/H^{>8},\end{equation}
where $H^{>8}$ is the ideal of elements of degree greater than $8$. Equation~\eqref{eq:cof_is_kinda_BU4} identifies the class $c_4\cdot u$ in mod $3$  cohomology with a class $y \in H^8(C(\Thom_{u_0}\!\epsilon))$. Moreover, for cohomology with mod $3$ coefficients, $y \in \HF{3}{8}\left(C(\Thom_{u_0}\! \epsilon)\right)$ satisfies
\begin{align*}
P^1(-c_2\cdot u)&=y\\
P^1(y)&=0,
\end{align*}
as was to be shown.

To complete the proof, note that Diagram~\eqref{eq:coh_G} implies
\[H^{*}(\BUct) \simeq H^*\left (C(\Thom_{u_0}\!\epsilon)\right )/\langle y\rangle\]
and 
\[\HF{3}{*}(\BUct) \simeq \HF{3}{*}(C(\Thom_{u_0}\!\epsilon))/\langle y\rangle\] as modules over the Steenrod algebra.
\end{proof}

\subsecl{Proof of Proposition~\ref{prop:uniqueness}}{subsec:proof_unique}

Consider the cofibers \[C:=\operatorname{Cof}(\epsilon\:S^7 \to \sk^{26}\BUc)\]
and
\begin{align*}C'&:=\operatorname{Cof}(\sphere^7 \to \sk^{26}\BUct) \simeq \Th{C}{-\delta},
\end{align*}
where $\delta\: C \to BU{\hspace{-.6pt}}(4)$ is an extension of $\gamma_3\oplus \underline{\mathbb C}$ over $C$.
We get a diagram
\begin{equation}\label{eq:0_vee_beta}
\begin{tikzcd}[column sep=1em, row sep=1em]
& \sphere^{10} \arrow[d,"\alpha_1" left] \arrow[dr,"\tilde v"]\\
\Sigma^{-1}C'\arrow[r,"\phi"]& \sphere^{7} \arrow[r]\arrow[d,"\beta_1" left] &\sk^{26}\BUct\arrow[dl,dashed, above] \\
& \Sigma^{-3}\tmf,
\end{tikzcd}
\end{equation} where $\tilde v$ is as in Diagram~\eqref{eq:thomified_htpy2}.
A dotted arrow is an element $\tilde \rho \in \tmf^{-3}\BUct$ satisfying Corollary~\ref{cor:exists}. Choices of the dotted arrow in Diagram~\eqref{eq:0_vee_beta} up to homotopy are a torsor for $\pi_0\operatorname{Maps}_{\sphere}(C',\Sigma^{-4}\tmf).$ We show that this group is zero via an Atiyah--Hirzebruch spectral sequence
$$E_2^{p,q}=H^p(C'; \pi_{-q+3}\tmf)\implies \tmf^{p+q-3}(C'),$$
depicted in Figure~\ref{fig:tmf_choice}.

We computed the cohomology of $C(\Thom_{u_0}\!\epsilon)$ in Proposition~\ref{prop:P1_module_cofiber} and
\[H^{*\leq 26}\left(C(\Thom_{u_0}\!\epsilon)\right)\simeq H^{*\leq 26}(C').\] 
The homotopy of $\tmf$ is known \cite{Bauer,Mathew}. The terms along the line $p+q=0$ on the $E_2$-page are $u \otimes \alpha_1$ and $\alpha_1\beta_1$ tensored with elements in $\HF{3}{10}(C')\simeq \HF{3}{10}(\BUct/S^7)\cdot u.$ Since $\pi_*\tmf$ has no other odd homotopy groups until degree $27$, there are no further possible contributions to consider.

First, consider $u\otimes \alpha_1$. Since $P^1P^1(u)=y$ and $\langle \alpha_1,\alpha_1,\alpha_1\rangle =\beta_1$, $d_8(u \otimes \alpha_1) = \beta_1 \otimes y \neq 0$ and the class $\alpha_1\otimes u$ does not survive the spectral sequence.

On the other hand, there are many bidegree $(-10,10)$ classes to check:

$$E_2^{(-10,10)} \simeq\alpha_1\beta_1\otimes \langle t^5\cdot u, \, t^3c_2\cdot u,\, t^2c_3\cdot u,\,tc_2^2\cdot u,\,c_2c_3\cdot u \rangle.$$
We claim that all classes above support a differential or are the target of a nonzero differential. Figure~\ref{fig:tmf_choice} shows the first interesting differentials in and out of this cell on the $E_2$-page.

\begin{figure}[h]
\centering
\textbf{A larger portion of the $E_2$-page of the Atiyah--Hirzebruch spectral sequence computing $\tmf^*\left(C'\right)$. }\par\medskip
\begin{tikzpicture}
\matrix (m) [matrix of math nodes,
nodes in empty cells,nodes={minimum width=3ex,
minimum height=3ex},
column sep=.1ex,row sep=.1ex]{
& & & & & & & & & & & & p= \\
& \node[circle,draw,yscale=.1,xscale=.1](c) {}; \node[rectangle,draw,yscale=.1,xscale=.1]{};& & & & \node[rectangle,draw,yscale=.1,xscale=.1]{};& & & \node[circle,draw,yscale=.2,xscale=.2] {}; & \node[rectangle,draw,yscale=.1,xscale=.1]{}; & &\node[circle,draw,yscale=.2,xscale=.2] {}; & 18 \\
& & & & & & & & & & & & 17 \\
&\node[circle,draw,yscale=.2,xscale=.2] {}; \node[rectangle,draw,yscale=.1,xscale=.1]{}; & & & & \node[rectangle,draw,yscale=.1,xscale=.1]{};& & & \node[circle,draw,yscale=.2,xscale=.2] {}; & \node[rectangle,draw,yscale=.1,xscale=.1]{};& &\node[circle,draw,yscale=.2,xscale=.2] {}; & 16 \\
& & & & & & & & & & & & 15 \\
&\node[circle,draw,yscale=.2,xscale=.2] {}; \node[rectangle,draw,yscale=.1,xscale=.1](f){}; & & & & \node[rectangle,draw,yscale=.1,xscale=.1]{};& & & \node[circle,draw,yscale=.2,xscale=.2] {}; & \node[rectangle,draw,yscale=.1,xscale=.1]{};& &\node[circle,draw,yscale=.2,xscale=.2] {}; & 14 \\
& & & & & & & & & & & & 13 \\
& \node[circle,draw,yscale=.2,xscale=.2] {}; \node[rectangle,draw,yscale=.1,xscale=.1]{}; & & & & \node[rectangle,draw,yscale=.1,xscale=.1]{};& & \node[](b'){};& \node[circle,draw,yscale=.2,xscale=.2] {}; & \node[rectangle,draw,yscale=.1,xscale=.1]{};& &\node[circle,draw,yscale=.2,xscale=.2] {}; & 12 \\
& & & & & & & & & & & & 11 \\
&\node[circle,draw,yscale=.2,xscale=.2] {}; \node[rectangle,draw,yscale=.1,xscale=.1]{}; & & & & \node[rectangle,draw,yscale=.1,xscale=.1]{};& & & \node[rectangle,draw,yscale=.3,xscale=.3,rotate=45] (b) {}; & \node[rectangle,draw,yscale=.1,xscale=.1]{};& &\node[circle,draw,yscale=.2,xscale=.2] {}; & 10 \\
& & & & & & & & & & & & 9 \\
&\node[circle,draw,yscale=.2,xscale=.2] {}; \node[rectangle,draw,yscale=.1,xscale=.1]{}; & & & & \node[rectangle,draw,yscale=.1,xscale=.1]{};& & & \node[circle,draw,yscale=.2,xscale=.2] {};& \node[rectangle,draw,yscale=.1,xscale=.1]{};& &\node[circle,draw,yscale=.2,xscale=.2] {}; & 8 \\
& & & & & & & & & & && 7 \\
&\node[circle,draw,yscale=.2,xscale=.2] {}; \node[rectangle,draw,yscale=.1,xscale=.1]{}; & & & & \node[rectangle,draw,yscale=.1,xscale=.1]{};& & & \node[circle,draw,yscale=.2,xscale=.2] {}; & \node[rectangle,draw,yscale=.1,xscale=.1]{};& & \node[circle,draw,yscale=.2,xscale=.2] (a){};& 6\\
\quad\strut q=& -17 & -16 & -15 & -14& -13&-12 &-11 &-10 &-9 &-8 &-7\strut \\
\pi_{-q}
&\beta_1^2, & & & & c_4^2 & & & \alpha \beta & c_6 & & \beta \\
& c_4c_6 & & & & & & & & & & \\
};
\draw[dashed] (m-3-2.north) -- (m-13-12.north);
\draw [arrow] (a) -- node[anchor=west]{$d_4$} (b);
\draw [arrow] (b) -- node[anchor=south]{$d_8$} (c);
\end{tikzpicture}\caption{The dashed line indicates the $p+q=0$ line converging to $\pi_0\operatorname{Maps}_{\sphere}(C',\Sigma^{-3}\tmf).$ A dot indicates a non-zero three-torsion group; a square a non-zero torsion-free group. The diamond in bidegree $(-10,10)$ indicates a nonzero $E_2$-terms which may contribute to the computation.}\label{fig:tmf_choice}\end{figure}
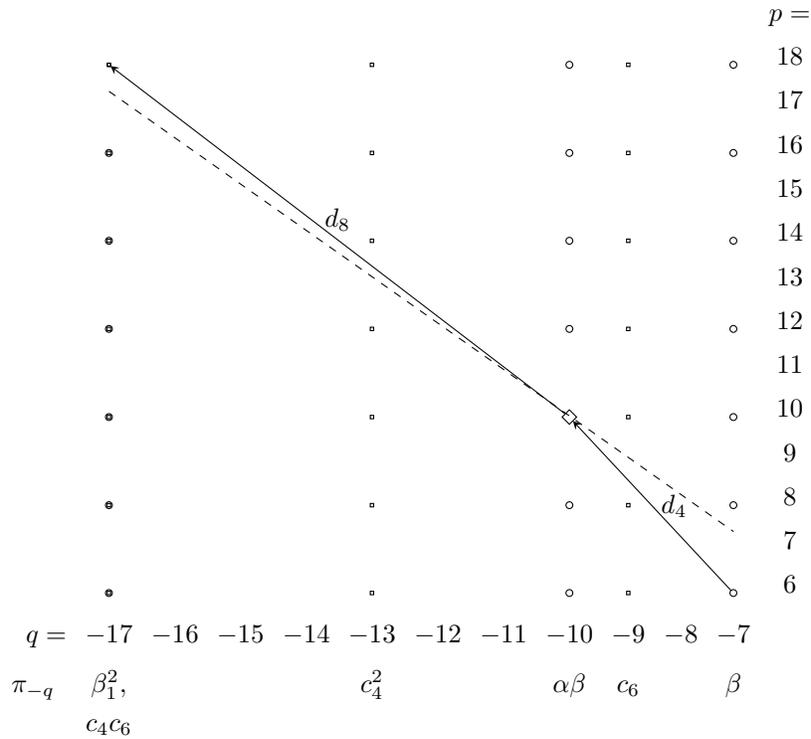

First, we check which of the above is the target of a differential in (A) below. Then we check that the remaining classes support a nonzero differential in (B).

\begin{enumerate}
\item[(A)] The only possible differential is a $d_4$ originating in bidegree $(-7,6)$ is computed as follows: classes in this cell are $\beta_1$ times classes in \begin{equation}\label{eq:h6}\HF{3}{6}(\BUc)\cdot u \simeq \langle t^3 \cdot u, \, tc_2 \cdot u, \, c_3 \cdot u \rangle\end{equation} Proposition~\ref{prop:P1_module_cofiber} implies:
\begin{align*}
P^1(t^3u)& =t^3P^1(u)=-c_2t^3u \\
P^1(tc_2\cdot u)&=t^3c_2\cdot u+tc_2^2\cdot u -tc_2^2\cdot u=t^3c_2\cdot u
\\
P^1(c_3 \cdot u)&=c_2c_3\cdot u -c_2c_3 \cdot u =0
\end{align*}
Therefore: $d_4(\beta_1 \otimes t^3\cdot u)= -\alpha_1\beta_1 \otimes c_2t^3\cdot u$ and the target dies on the $E^5$-page.

\item[(B)] A class $ \beta_1 \alpha_1 \otimes (z \cdot u)$ which survives to the $E^8$-page supports a nonzero $d_8$ if $P^2z$ is nonzero.
Using Proposition~\ref{prop:P1_module_cofiber}, facts about Steenrod operations we get:
\begin{align*} -(P^1 P^1)(t^5\cdot u)&=-(P^1 P^1)(t^5)\cdot u +P^1(t^5)P^1(u) \\&= (t^9+t^7c_2)\cdot u\\ \\
-(P^1 P^1)(t^2c_3\cdot u) & = -(P^1P^1)(t^2)c_3\cdot u+P^1(t^2)P^1(c_3\cdot u)-t^2(P^1P^1)(c_3\cdot u)\\
&=t^6c_3\cdot u\\
-(P^1 P^1)(tc_2^2 \cdot u) &= -(P^1P^1)(tc_2\cdot u) c_2+P^1(tc_2\cdot u)P^1(c_2)-P^1P^1(c_2)(tc_2\cdot u)\\
&= -P^1(t^3c_2 \cdot u)c_2 +t^3c_2^3\cdot u+tc_2^4 \cdot u\\
&= -t^3P^1(c_2\cdot u)+t^3c_2^2\cdot u+tc_2^4u\\
& = t^3c_2^2\cdot u+tc_2^4\cdot u\\ \\
-(P^1 P^1)(c_2c_3\cdot u) & = -(P^1P^1)(c_3 \cdot u)c_2+P^1(c_3\cdot u)P^1(c_2)-(P^1P^1)(c_2)(c_3\cdot u)\\
&= -(P^1P^1)(c_2)c_3\cdot u \\& =c_2^3c_3\cdot u.
\end{align*}
This shows that the classes $t^5 \cdot u, \, t^2c_3\cdot u, \, tc_2^2 \cdot u, \, c_2c_2\cdot u$ support nonzero differentials whose joint span is $4$-dimensional.
\end{enumerate}
No terms on the $p+q=0$ line survive the spectral sequence, so the group in question is zero.

\section{Untwisting the invariant}\label{sec:untwisting}

Our work in Section~\ref{sec:exist} provides an association

$$V \mapsto \Thom(V)^*\tilde \rho \in \tmf^{-3}(\Th{\CP^5}{\smallminus V}).$$
Vector bundles on $\CP^5$ with $c_1\equiv 0 \pmod 3$ and $c_2\equiv 0 \pmod 3$ are $\tmf$-orientable, as we now explain: Any complex bundle $V$ carries an $H\Z$-Thom class. Since $\tau_0\tmf = H\Z$, $V$ is $\tmf$-orientable if and only if this class lifts up the Postnikov tower for $\tmf$. Since $\CP^5$ is finite-dimensional, this is a finite lifting problem. The Postnikov tower for $\tmf$ through degree $10$ has only one stage that obstructs lifting the $\HZ$-Thom class. This gives the condition that $c_1^2-2c_2\equiv 0 \pmod 3$.

Thus, for $V$ over $\CP^5$ with $c_1\equiv 0 \pmod 3$ and $c_2\equiv 0 \pmod 3$, there exist isomorphisms $\tmf^*(\Th{\CP^5}{\smallminus V})\simeq \tmf^*(\suspp \CP^5)$.
However, there are many choices of such an isomorphism and our choice cannot be dependent on $V$.

The ideal way to resolve this would be via a universal example: a space $B$ together with a bundle $V_B$ which carries a (canonical) $\tmf$-orientation, such that all bundles of interest are canonically pullbacks of $V_B$ and therefore inherit a $\tmf$-orientation. Classical examples of this phenomenon are numerous: $BU{\hspace{-.6pt}}(n)$ is canonically $H\Z$ oriented, giving the classical $H\Z$-Thom isomorphism for complex bundles; $BSpin$ is canonically $KO$-oriented via the Atiyah--Bott--Shapiro orientation, giving a canonical $KO$-Thom isomorphism for spin bundles \cite{ABS}; $BString$ is canonically $tmf$-oriented, giving a canonical $tmf$-orientation for string bundles \cite{AHR}.

Our bundles are not string, nor is there an obvious candidate for a universal example. Other, more hands-on, approaches to resolving orientability problems can be found in the literature, for example in \cite{Chat,BhatChat}\footnote{Since $L_{K(2)}TMF=EO_2$ at the prime $3$, the orientability studied in \cite{Chat,BhatChat} is closely related to ours.}. The approach we take here is informed by discussions with H. Chatham.

In Theorem~\ref{thm:well_defined}, we show that for $V\: \CP^5 \to BU{\hspace{-.6pt}}(3)$ with $c_1\equiv 0 \pmod 3$ and $c_2\equiv 0 \pmod 3$, there is a set of Thom isomorphisms subject to some concrete restrictions, with the following property: there is a map $i\:\sphere^{10}\to \suspp \CP^5$ such that, for any Thom isomorphism $v$ in this distinguished set, the image of $\Thom(V)^*(\tilde \rho)$ under the restriction
\begin{equation}\label{eq:indep}\tmf^*(\Th{ \CP^5}{\smallminus V}) \xrightarrow{v}\tmf^*(\suspp \CP^5)\xrightarrow{i^*} \tmf^{-3}(\sphere^{10})\end{equation} does not depend on the choice of $v$. Thus, applying the composite in Equation~\eqref{eq:indep} to the class $\Thom(V)^*(\tilde \rho)$ defines the desired invariant $\rho(V)$.

In Subsection~\ref{subsec:background_orient}, we briefly recapitulate the theory of orientations and establish notation. In Subsection~\ref{subsec:choiceOrientation} we study orientation theory of bundles on $\CP^5$ in detail, define a set of orientations that make Equation~\eqref{eq:indep} independent of choice within this set. We prove independence in Theorem~\ref{thm:well_defined}. We can then define the invariant $\rho$ for $V$ rank $3$ on $\CP^5$ with $c_1\equiv 0 \pmod 3$ and $c_2\equiv 0 \pmod 3$ via the definition indicated in the previous paragraph.

The next Subsection~\ref{subsec:rho_works} features the verification that $\rho$ distinguishes bundles with the same Chern classes, completing the proof of Theorem~\ref{thm:combined}.

The final two subsections include some computations (Subsection~\ref{subsec:sum_line_bundles}) and the observation that $\rho$ can also be defined for rank $2$ bundles (Subsection~\ref{subsec:rank_2}). In these subsections, we also discuss future research questions that we hope to address.

\begin{conv} For all remaining sections except Subsection~\ref{subsec:rank_2}, all spaces and spectra are implicitly localized at the prime $3$. \end{conv}

\subsecl{Background on orientability and orientations}{subsec:background_orient}

We present the relevant background on orientability, orientations, Thom classes, and Thom isomorphisms. The classical version is as follows. Let $V$ be a real rank $n$ vector bundle over a space $X$, with disc bundle $dV$ and sphere bundle $sV$. Let $S^n \to dV$ be the inclusion of a fiber inducing $ i\: S^n \to dV/sV$. Recall that, classically, a Thom class for a vector bundle $V$ over $X$ in generalized cohomology theory $E$ is a class $v \in E^{n}(dV/sV)$ such that $i^*v$ is a unit in $E^0(\sphere^0)\simeq E^n(\sphere^n)$. Orientability refers to the existence of such a class; an orientation is a choice of such a class. Given an such orientation, pairing with the Thom class under the Thom diagonal gives a Thom isomorphism
$$(-)\cdot u\: E^*(\suspp X) \simeq E^*(\Th{X}{V}).$$
\begin{conv} We will use the terms orientation, Thom class, and Thom isomorphism synonymously. \end{conv}
Alternatively, a vector bundle $V\: X \to BU{\hspace{-.6pt}}(n)$ gives rise to a (stable) spherical bundle
\begin{equation}\label{eq:assoc_spherical} V_\sphere:= \left(X \to BU{\hspace{-.6pt}}(n)\to BU \xrightarrow{J}BGL_1\sphere\right),\end{equation}
where $J$ the complex $J$-homomorphism.\footnote{The complex $J$ homomorphism is commonly defined as a map $J\:U \to GL_1\sphere$. Our $J$ is the delooping of the former.} The unit map $1\: \sphere \to E$ induces a map $BGL_11\: BGL_1\sphere \to BGL_1E$ and obtain \begin{equation}\label{eq:assoc_E_bdl} V_E:= \left(X \xrightarrow{V_\sphere}BGL_1\sphere \xrightarrow{BGL_11}BGL_1E\right).\end{equation}
$E$-orientability is the condition that $V_E$ in Equation~\eqref{eq:assoc_E_bdl} is nullhomotopic; an $E$-orientation is a choice of nullhomotopy of $ V_E$ (see \cite{ABGHR2}, building on \cite{May77}).

\begin{rmk}\label{rm:module_iso} An orientation gives not just an isomorphism on cohomology, but an isomorphism of $E$-modules:
$$\Th{X}{V}\otimes E\simeq \suspp X \otimes E.$$
\end{rmk}

\begin{notation}\label{not:stabilized_bdl} In this section we will need to refer to many different maps associated to a given $V\: X \to BU{\hspace{-.6pt}}(r)$. Our notation is slightly abusive but clear from context.
\begin{itemize}
\item $V$ will refer to any of the following associated maps:
\begin{itemize}
\item The definitional map $X \to BU{\hspace{-.6pt}}(r)$,
\item The composite $X \to BU{\hspace{-.6pt}}(r) \to BU$, and
\item The transpose of the previous item under the loops-suspension adjunction, which is a map $\suspp X \to bu$.
\end{itemize}
\item For $E$ a commutative ring spectrum, $V_E$ will refer to both:
\begin{itemize}
\item The composite $X \xrightarrow{V} BU \xrightarrow{J} BGL_1\sphere \xrightarrow{BGL_11} BGL_1E$, and
\item The transpose $ \suspp X \xrightarrow{V} bu \xrightarrow{j} bgl_1\sphere \xrightarrow{bgl_11} bgl_1E$.
\end{itemize}
\end{itemize}
\end{notation}

\subsecl{Selecting orientations and the definition of $\rho$}{subsec:choiceOrientation}

The $3$-local spectrum $\Sigma^\infty_+ \CP^5$ has a splitting arising from the Adams splitting of $\suspp\CP^\infty$: \begin{equation}\label{eq:sum}\suspp \CP^5 \simeq X_0 \oplus X_1,\end{equation}
where
\begin{align*}X_1 &\simeq \Sigma^2C(\alpha_1) \oplus \sphere^{10}, &
X_0&\simeq \suspp H\mathbb P^2 \simeq \sphere^0 \oplus \Sigma^4 C(\alpha_1).\end{align*}
The summand $\suspp\mathbb HP^2$ is split via $p=\Sigma^\infty_+p'$ where $p'\: \mathbb CP^5 \to \mathbb HP^2$ is the map of spaces given by taking the $10$-skeleton of the quotient map $\CP^\infty \to \mathbb H P^\infty$.
\begin{rmk} We fix isomorphisms of $X_0$ and $X_1$ with the sums above. We will write $i$ for all inclusions of summands and $p$ for all projections onto summands. These choices can be made once independent of any future bundles involved.
\end{rmk}

Recall the terminology from Notation~\ref{not:stabilized_bdl}. To study $\tmf$-orientations is to study nullhomotopies of $$V_{\tmf}:\suspp \CP^5 \to bgl_1\tmf.$$ Our strategy is to restrict $V_{\tmf}$ to each summand in the decomposition of Equation~\eqref{eq:sum} and separately study nullhomotopies on each summand. On the summand $X_1$, we show that the bundles of interest possess a certain canonical orientation arising from the image of $j$ spectrum, while the choice on the summand $X_0$ will turn out not to matter.

At a prime $p$, the image of $j$ spectrum $bj$ can be defined as the cofiber of the map $\psi^q-1\: bu \to bu$, where $\psi^q$ is the $q$-th Adams operation and $q$ is a topological generator for $\hat{\Z}_{p}^\times$. By stable Adams conjecture (proved\footnote{Notable previous attempts at the stable Adams conjecture are \cite{Fried,FriedSey}. The Adams conjecture for spaces is proved in \cite{Sull,Quill}.} in \cite{BhatKit}) there is a factorization of the $j$ homomorphism $$j=\big( bu \to bj \xrightarrow{j'} bgl_1\sphere\big),$$ where the map $bu\to bj$ is the natural map to the cofiber of $\psi^q-1$.

\begin{lem}\label{lem:j_homtopy} The map $V_j:=\big(X_1 \xrightarrow{V|_{X_1}} bu\to bj\big)$ is nullhomotopic. Up to homotopy, there is a unique such nullhomotopy.
\end{lem}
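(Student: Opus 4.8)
The plan is to prove both assertions at once by showing that the relevant groups of maps of spectra vanish outright, so that \emph{every} map $X_1 \to bj$ is null and admits an essentially unique nullhomotopy --- in particular $V_j$, with no reference to $V$ itself. Concretely, existence of a nullhomotopy of $V_j$ will follow from $[X_1, bj] = \pi_0\operatorname{Maps}_\sphere(X_1, bj) = 0$, and uniqueness up to homotopy will follow from the standard fact that the set of homotopy classes of nullhomotopies of a null map $f\: X_1 \to bj$ is a torsor over $[\Sigma X_1, bj] = \pi_1\operatorname{Maps}_\sphere(X_1,bj)$, which I will show is also $0$.

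The only real input is the homotopy of $bj = \operatorname{cofib}(\psi^q - 1 \: bu \to bu)$. Since $\pi_* bu$ is $\Z_{(3)}$ in even nonnegative degrees and $0$ in odd degrees, and $\psi^q$ acts on $\pi_{2m} bu$ by $q^m$, the long exact sequence of the cofiber gives $\pi_{2m} bj \cong \Z_{(3)}/(q^m - 1)$ for $m \geq 1$ and $\pi_{2m+1} bj = 0$ for all $m \geq 1$. Because any topological generator $q$ of $\Z_3^\times$ satisfies $q \equiv 2 \pmod 3$, one has $v_3(q^m - 1) = 0$ for $m$ odd, so $\pi_{2m} bj = 0$ exactly when $m$ is odd. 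In the range we need, this records the vanishing $\pi_2 bj = \pi_3 bj = \pi_5 bj = \pi_6 bj = \pi_7 bj = \pi_{10} bj = \pi_{11} bj = 0$ (the only nonzero groups nearby being $\pi_4 bj \cong \pi_8 bj \cong \Z/3$).

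I then use the splitting $X_1 \simeq \Sigma^2 C(\alpha_1) \oplus \sphere^{10}$. For existence, $[\sphere^{10}, bj] = \pi_{10} bj = 0$, and applying $[-, bj]$ to the cofiber sequence $\sphere^5 \xrightarrow{\alpha_1} \sphere^2 \to \Sigma^2 C(\alpha_1) \to \sphere^6 \to \sphere^3$ squeezes $[\Sigma^2 C(\alpha_1), bj]$ between $\pi_6 bj = 0$ and $\pi_2 bj = 0$; hence $[X_1, bj] = 0$. For uniqueness, $\Sigma X_1 \simeq \Sigma^3 C(\alpha_1) \oplus \sphere^{11}$ has cells in degrees $3, 7, 11$; the same argument applied to $\sphere^6 \xrightarrow{\alpha_1} \sphere^3 \to \Sigma^3 C(\alpha_1) \to \sphere^7 \to \sphere^4$ squeezes $[\Sigma^3 C(\alpha_1), bj]$ between $\pi_7 bj = 0$ and $\pi_3 bj = 0$, while $[\sphere^{11}, bj] = \pi_{11} bj = 0$, so $[\Sigma X_1, bj] = 0$ and the nullhomotopy is unique up to homotopy.

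There is no serious obstacle here; the entire content is degree bookkeeping, and the one point worth emphasizing is \emph{why} it works. The Adams splitting places the cells of $X_1$ in degrees $2, 6, 10$, i.e. in degrees $2m$ with $m = 1, 3, 5$ odd, which are precisely the degrees in which $bj$ is invisible because $q^m - 1$ is a $3$-adic unit. Thus the only thing one must verify with any care is that the relevant homotopy groups of $bj$ --- both the even ones hit by the cells of $X_1$ and the odd ones controlling the connecting maps and the nullhomotopy torsor --- all vanish, a computation that is robust in the choice of topological generator $q$.
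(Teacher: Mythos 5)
Your proof is correct and follows essentially the same route as the paper: both arguments reduce to the observation that the cells of $X_1$ sit in degrees $2,6,10$ while $\pi_{\le 11}bj$ is concentrated in degrees $4$ and $8$, so the groups $[X_1,bj]$ and $[\Sigma X_1, bj]$ controlling existence and uniqueness of the nullhomotopy both vanish. The only cosmetic difference is that you derive $\pi_*bj$ directly from the cofiber sequence for $\psi^q-1$ and run the bookkeeping via cofiber sequences, whereas the paper cites Mahowald--Ravenel for $\pi_*bj$ and phrases the bookkeeping as an Atiyah--Hirzebruch spectral sequence.
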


\begin{proof} The Atiyah--Hirzebruch spectral sequence for computing $\pi_*\operatorname{Maps}_\sphere(X_1,bj)$ shows that both $\pi_0(\operatorname{Maps}_{\sphere}{X_1},bj)$ and $\pi_0\operatorname{Maps}_{\sphere}({X_1}, \Sigma^{-1}bj)$ are trivial, since $\pi_{\leq 10}bj$ is concentrated in degrees $0, 4,$ and $8$ \cite[Sec. 4]{MahRav} and $\HF{p}{*}X_1$ is concentrated in degrees $2,6,$ and $10$.
\end{proof}
\begin{defn}\label{def:jorient} Let $X$ be a space and let $W\: \suspp X\to bu$ be a given map. Assume that the composite $$W_j=\big(\suspp X \xrightarrow{W} bu \to bj\big)$$ is canonically null homotopic. Given any generalized cohomology theory $E$, the {\em $j$-orientation of $W_{E}$} will refer to the distinguished nullhomotopy
obtained by whiskering the canonical nullhomotopy of $W_j$ with the composite
$bj\xrightarrow{j'}bgl_1\sphere \xrightarrow{bgl_11} bgl_1E.$
\end{defn}
In particular, given a vector bundle $V$ of rank $3$ on $\CP^5$ with $c_1\equiv 0 \pmod 3$ and $c_2\equiv 0 \pmod 3$, $j$-orientation of $(V|_{X_1})_{\tmf}$ will refer to the nullhomotopy of the composite $$X_1 \xrightarrow{V|_{X_1}} bu \to bj \to bgl_1\tmf$$ obtained from the unique nullhomotopy of $(V_{X_1})_j$ given by Lemma~\ref{lem:j_homtopy}.

We are interested in orientations that extend $j$-orientations.
\begin{defn}\label{def:star} Let $Y$ be a space together with a splitting $\suspp Y=Z\oplus X$. Let $E$ be a ring spectrum with $\tau_0E=H\mathbb Z_{(3)}$.
Given a vector bundle $V$ on $Y$ is such that $X \xrightarrow{V|_{X}}bu \to bj$ is canonically null, we say that a $E$-orientation $v$ of $V$ satisfies condition $(*)$ if:
\begin{itemize}
\item[$(*1)$] $v$ restricts to the $j$-orientation of $(V|_{X})_E$; and
\item[$(*2)$] $v$ lifts the canonical $\HZ$-orientation under $0$-truncation $bgl_1E\to bgl_1\HZ_{(3)}$.
\end{itemize}
\end{defn}

The main goal of this section is to prove the following:

\begin{thm}\label{thm:well_defined}Let $V$ be a vector bundle over $\CP^5$ with $c_1(V)\equiv c_2(V)\equiv 0\pmod 3$. Let $v$ be a $\tmf$-orientation for $V$ satisfying condition $(*)$ with respect to the decomposition $\suspp \CP^\infty=X_0 \oplus X_1.$
Then the composite \begin{equation}\label{eq:rho_def}
\begin{tikzcd}[column sep=-18em, row sep=.5em]\rho(V) := \left(\sphere^{10} \xrightarrow{i\otimes 1} \Sigma^\infty_+ \CP^5\otimes \tmf \xrightarrow{v} \Th{\CP^5}{\smallminus V}\otimes \tmf \xrightarrow{\Thom(V)^*(\tilde\rho)}\Sigma^{-3}\tmf\right)
\end{tikzcd}
\end{equation} is independent of $v$, as an element in $\pi_0\left(\operatorname{Maps}_{\sphere}(\sphere^{10},\Sigma^{-3}\tmf) \right)\simeq \pi_{13}\tmf\simeq\Z/3.$
\end{thm}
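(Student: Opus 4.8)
The plan is to show that any two $\tmf$-orientations of $V$ satisfying $(*)$ differ by a unit $\theta\in\tmf^0(\suspp\CP^5)^\times$ which is so tightly constrained by $(*1)$ and $(*2)$ that the induced change in $\rho(V)$ is forced to lie in $c_4\cdot\pi_5\tmf$, which vanishes. First I would recall that the set of $\tmf$-orientations of $V$ (equivalently, $\tmf$-Thom isomorphisms for $-V$) is a torsor over the group of units $\tmf^0(\suspp\CP^5)^\times$, where a unit acts by multiplying the Thom class. Writing the composite of Equation~\eqref{eq:rho_def} as $i^*\Psi$, where $\Psi:=\Thom(V)^*(\tilde\rho)\circ v\in\tmf^{-3}(\suspp\CP^5)$ is the transported class and $i$ is the inclusion of the top summand, replacing $v$ by $v'=\theta\cdot v$ replaces $\Psi$ by $\theta^{\pm1}\Psi$. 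Thus the entire problem reduces to analyzing the unit $\theta$ and the quantity $i^*\big((\theta^{\pm1}-1)\Psi\big)$.

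Next I would pin down $\theta$ using condition $(*)$ and the splitting $\suspp\CP^5=X_0\oplus X_1$. Condition $(*1)$ forces $\theta|_{X_1}=1$, and condition $(*2)$ forces the $\sphere^0$-component of $\theta$ (a scalar in $\pi_0\tmf^\times=\Z_{(3)}^\times$) to be $1$, since $\tmf\to\HZ$ is an isomorphism on $\pi_0$; these are the two genuinely distinct roles of the two conditions. Hence $\theta=1+n$ with $n\in\tmf^0(\Sigma^4 C(\alpha_1))\subseteq\tmf^0(\suspp\CP^5)$, where $\Sigma^4 C(\alpha_1)$ is the non-unit summand of $X_0$. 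I would then compute $\tmf^0(\Sigma^4 C(\alpha_1))$ via its Atiyah--Hirzebruch spectral sequence: the cells sit in dimensions $4$ and $8$, and since $\pi_4\tmf=0$ the only surviving contribution is $H^8(-;\pi_8\tmf)=\Z_{(3)}\{c_4\}$. Therefore $\tmf^0(\Sigma^4 C(\alpha_1))\cong\Z_{(3)}$, generated by $c_4\cdot\tilde e$ for a lift $\tilde e\in\tmf^8(\Sigma^4 C(\alpha_1))$ of the class dual to the $8$-cell. In particular $n=c_4\cdot\tilde e'$ for some $\tilde e'\in\tmf^8(\suspp\CP^5)$, and $n^2$ is detected in $H^{\geq16}(\CP^5)=0$, so $\theta^{-1}=1-n$ and in either case $(\theta^{\pm1}-1)\Psi=\pm\,n\Psi$.

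Finally I would run the degree count. By the cap--cup adjunction, $i^*(n\Psi)=\langle\Psi,\,n\cap[\CP^5]\rangle$, where $[\CP^5]\in\tmf_{10}(\suspp\CP^5)$ is the fundamental class carried by $i$. Since $n=c_4\tilde e'$ and all operations are $\pi_*\tmf$-linear, $n\cap[\CP^5]=c_4\cdot(\tilde e'\cap[\CP^5])$ with $\tilde e'\cap[\CP^5]\in\tmf_2(\suspp\CP^5)$, so the change in $\rho(V)$ equals $c_4\cdot\langle\Psi,\,\tilde e'\cap[\CP^5]\rangle$, and the pairing lies in $\pi_{2-(-3)}\tmf=\pi_5\tmf$. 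As $\pi_5\tmf=0$ ($3$-locally), the change vanishes and $\rho(V)$ is independent of the choice of $v$.

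The main obstacle is the middle step: correctly identifying the group of orientation-differences. One must verify that there are no Atiyah--Hirzebruch extension or filtration contributions to $\tmf^0(\Sigma^4 C(\alpha_1))$ beyond the single $c_4$-class, which is exactly what the two-cell structure together with $\pi_4\tmf=0$ guarantees, so that $n$ is genuinely $c_4$-divisible. Once this is established, $\pi_*\tmf$-linearity pulls $c_4$ out of the final pairing and the conclusion is immediate from $\pi_5\tmf=0$; by contrast, dropping $(*2)$ would reintroduce an overall scalar $\lambda\in\Z_{(3)}^\times$ that rescales $\rho(V)$ nontrivially in $\Z/3$, so both conditions are essential.
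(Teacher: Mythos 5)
Your argument is correct, and it reaches the conclusion by a genuinely different route in the decisive step. The setup matches the paper's: both proofs reduce to the ratio of two orientations satisfying $(*)$ and locate that ratio using the splitting $\suspp\CP^5\simeq\sphere^0\oplus\Sigma^4C(\alpha_1)\oplus X_1$ together with an Atiyah--Hirzebruch computation of the orientation-difference group (Figure~\ref{fig:gl1_CP5}, whose only contributions are $H^8\otimes\pi_8\tmf\simeq\Z_{(3)}$ and $H^{10}\otimes\pi_{10}\tmf\simeq\Z/3$; your sharper use of $(*1)$ also kills the latter term, leaving exactly your $\Z_{(3)}\cdot c_4\tilde e$). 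Where you diverge is in why the residual ambiguity cannot change $\rho(V)$. The paper argues structurally: it first shows (Proposition~\ref{prop:ratio_thom}, via the quotient $\CP^5\to\mathbb{HP}^2$) that the component $X_1\otimes\tmf\to X_0\otimes\tmf$ of the automorphism $w^{-1}v$ vanishes, and then observes that the induced homomorphism $G'\to\pi_0\Auttmf(\sphere^{10}\otimes\tmf)\simeq\Z_{(3)}^\times$ is forced to be trivial because $\Z/3\oplus\Z_{(3)}$ admits no nonzero homomorphism to $\Z_{(3)}^\times$. You instead compute the change in $\rho(V)$ directly as the Kronecker pairing $\langle\Psi,\,n\cap i_*(1)\rangle$, pull out the $c_4$ by $\pi_*\tmf$-linearity, and land in $c_4\cdot\pi_5\tmf=0$; this bypasses both Proposition~\ref{prop:ratio_thom} and the group-theoretic step, at the cost of invoking the multiplicative identification of the orientation torsor with $\tmf^0(\suspp\CP^5)^\times$ and the cup--cap adjunction (both standard, and implicitly present in the paper's use of $w^{-1}v$). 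You are right to flag the $c_4$-divisibility as the crux rather than a bare filtration count: a difference supported on $\sphere^0$ or carrying a $\beta_1$-coefficient could pair nontrivially against $\Psi$ (since $\alpha_1\beta_1\neq0$ in $\pi_{13}\tmf$), and it is precisely $\pi_4\tmf=0$ on the two-cell complex $\Sigma^4C(\alpha_1)$, combined with $(*2)$ killing the scalar on $\sphere^0$, that forces the pairing to vanish; your closing remark that dropping $(*2)$ would rescale $\rho$ by a unit mod $3$ agrees with the paper's reason for imposing that condition.
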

\begin{rmk} Note that $v$ satisfying the hypothesis of the theorem exists.
Let $$V\: \suspp \mathbb HP^2 \oplus X_1 \to bgl_1\sphere$$ be $\tmf$-orientable. Since $\pi_0\operatorname{Maps}_{\sphere}(X_1,gl_1H\Z)=0$, there is a unique nullhomotopy of any null homotopic map $X_1 \to bgl_1H\Z$. Summing the $j$-orientation of $V|_{X_1}$ with any $\tmf$-orientation of $V|_{\suspp \mathbb HP^2}$ lifting the canonical $H\Z$ orientation gives an appropriate orientation of $V$.\end{rmk}
The rest of this section is devoted to the proof of Theorem~\ref{thm:well_defined}. To begin, suppose that $v$ and $w$ are two $\tmf$-orientations of a bundle $V$, both satisfying $(*)$. Consider the following diagram $\tmf$-modules:
\begin{equation}\label{diag:comms}
\begin{tikzcd}[row sep = .6em]
& \Sigma^\infty_+\CP^5 \otimes \tmf \\
\sphere^{10}\otimes \tmf \arrow[ur, "i"]\arrow[dr,"i" below] & \\
& \Sigma^\infty_+\CP^5 \otimes \tmf \arrow[uu, "w^{-1} v" right],
\end{tikzcd}
\end{equation}
where $w^{-1}v$ is the automorphism of $\CP^5 \otimes \tmf$ obtained from composing the Thom isomorphisms corresponding to $v$ with the inverse of the Thom isomorphism corresponding to $w$ (see Remark~\ref{rm:module_iso}).
Note that, if Diagram~\ref{diag:comms} were to commute up to homotopy, the theorem would be immediate. However, commutativity is stronger than necessary. More precisely, we only need the diagram to commute after applying a certain $\tmf$-cohomology. We will return to this after some preliminary calculations.

Let $\Auttmf$ denote automorphisms in the category of $\tmf$-modules.
We study which elements of $\pi_0\operatorname{Aut}_{\tmf}(\Sigma^\infty_+\CP^5\otimes \tmf)$ arise as ratios of orientations satisfying condition $(*)$. Since $\CP^\infty\simeq X_0 \oplus X_1$, an automorphism $a$ of $\suspp \CP^5\otimes \tmf$ is represented by

\begin{equation}\label{eq:matrix_aut} a=\begin{pmatrix} a_{00} & a_{01} \\
a_{10} & a_{11}
\end{pmatrix}\end{equation} where the $a_{ij} \in \op{Maps}_{\tmf}(X_i\otimes \tmf,X_j\otimes \tmf)$ for $i\neq j$ and $a_{ii} \in \Auttmf(X_i\otimes \tmf).$ To study the failure of Diagram~\ref{diag:comms} to commute, we examine the possibilities for $a_{10}$ and $a_{11}$.

\begin{prop}\label{prop:ratio_thom} Suppose that $v,w$ are two $\tmf$-Thom classes for a $\tmf$-orientable bundle of rank $3$ on $\CP^5$ and that both satisfy condition $(*)$. Let $a=w^{-1}v$ be the associated automorphism of $\suspp\CP^5\otimes \tmf.$ Then $a_{10}\: X_1\otimes \tmf \to \suspp\mathbb HP^2\otimes \tmf$ is null.\end{prop}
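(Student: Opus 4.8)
The plan is to identify the automorphism $a = w^{-1}v$ with multiplication by the ratio of the two orientations, and then to exploit the fact that the Adams splitting $\suspp\CP^5 \simeq X_0 \oplus X_1$ is multiplicative. The upshot will be that $a$ preserves the summand $X_1 \otimes \tmf$, whence its component into $X_0 \otimes \tmf$ is forced to vanish.

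First I would recall that any two $\tmf$-orientations of a fixed bundle differ by a unit: there is a unique $u \in \tmf^0(\CP^5)^\times$ with $v = u\cdot w$ (cup product against the Thom class), and under the resulting Thom isomorphisms the automorphism $a = w^{-1}v$ of $\suspp\CP^5 \otimes \tmf$ is the cap-product action of $u$ on $\tmf_*(\CP^5)$. Writing $u = 1 + \tilde u$ with $\tilde u$ in the augmentation ideal, I would decompose $\tilde u = \tilde u^{(0)} + \tilde u^{(1)}$ along the splitting, with $\tilde u^{(0)} \in \widetilde{\tmf}^0(X_0)$ supported on $X_0 \simeq \suspp\mathbb{HP}^2$ and $\tilde u^{(1)} \in \tmf^0(X_1)$ supported on $X_1$.

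Next I would use hypothesis $(*1)$. Since $v$ and $w$ both restrict on $X_1$ to the same $j$-orientation of $(V|_{X_1})_\tmf$ (Definition~\ref{def:jorient}), the difference of the two nullhomotopies restricts to the trivial class in $\pi_0\operatorname{Maps}_\sphere(X_1, gl_1\tmf)$; hence $\tilde u^{(1)} = 0$, so that $u = 1 + \tilde u^{(0)}$ is supported entirely on $X_0$, whose reduced cells lie in degrees $4$ and $8$. Notably, only the $X_1$-part of condition $(*)$ is needed for this proposition. Finally I would invoke multiplicativity of the splitting: the summand $X_0$ arises from $\CP^5 \to \mathbb{HP}^2$, which pulls the generator of $H^4(\mathbb{HP}^\infty)$ back to $t^2$, so $X_0$ is the span of the even powers of $t$ and forms a subring containing the unit, while $X_1$ is the span of the odd powers and is a module over $X_0$; thus $X_0 \cdot X_1 \subseteq X_1$. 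Dually, cap product with $\tilde u^{(0)} \in \widetilde{\tmf}^0(X_0)$ carries the $X_1$-summand of $\tmf_*(\CP^5)$ into itself (equivalently, capping homology classes in degrees $\equiv 2 \pmod 4$ with a total-degree-zero class of filtration $4$ or $8$ again yields classes in degrees $\equiv 2 \pmod 4$). Therefore $a = 1 + (\tilde u^{(0)} \cap -)$ preserves $X_1 \otimes \tmf$, and so its component $a_{10}\colon X_1 \otimes \tmf \to \suspp\mathbb{HP}^2 \otimes \tmf$ vanishes.

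The main obstacle will be making the first step precise: one must check that the difference of two $\tmf$-orientations, a priori a homotopy class of maps into $gl_1\tmf$, acts on $\suspp\CP^5 \otimes \tmf$ exactly as a cup-product unit, and that it is precisely the restriction of this difference to $X_1$ that condition $(*1)$ annihilates. Once this bookkeeping is in place, the multiplicativity of the Adams splitting — the statement that $X_1$ is a module over the subring $X_0$ — makes the conclusion essentially formal.
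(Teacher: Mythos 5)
Your proposal is correct in substance but proves the proposition by a genuinely different mechanism than the paper. The paper never isolates the difference class $u=v/w$: instead it uses condition $(*1)$ to descend the data to $\mathbb{HP}^2$. Since $X_1\to bu\to bj$ is canonically null, the composite $\suspp\CP^5\to bj\to bgl_1\tmf$ factors through a map $q$ out of $\suspp\mathbb{HP}^2$, and an orientation satisfying $(*1)$ is the same datum as a nullhomotopy $v'$ of $\suspp\mathbb{HP}^2\xrightarrow{q}bj\to bgl_1\tmf$. Thomifying the map of spaces $p'\:\CP^5\to\mathbb{HP}^2$ then yields a ladder intertwining $a=w^{-1}v$ with $a'=(w')^{-1}v'$ over the projection $p$, so $a_{10}=p\circ a\circ i=a'\circ p\circ i=0$ because $p\circ i=0$. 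The paper's route has the advantage that compatibility with the Thom diagonal is automatic from naturality of Thomification along the map of spaces $p'$; yours has the advantage of exhibiting exactly which class (the unit $u$, supported on $X_0$) measures the discrepancy between the two Thom isomorphisms.

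The one step you should shore up is the ``multiplicativity of the Adams splitting.'' The observation that $X_0$ corresponds to the even powers of $t$ and $X_1$ to the odd powers is a statement about $H^*(\CP^5)$; it does not by itself imply that the stable diagonal $\Delta\:\suspp\CP^5\to\suspp\CP^5\otimes\suspp\CP^5$ carries $X_1$ into $(X_0\otimes X_1)\oplus(X_1\otimes X_0)$, since a map of spectra can vanish on cohomology without being null. What your argument actually requires is that $\Delta(X_1)$ have no component in $X_0\otimes X_0$; the clean justification is that the $3$-local splitting arises from the idempotents $\tfrac12(1\pm[-1])$ for complex conjugation on $\CP^\infty$, the diagonal commutes with $[-1]$, and the resulting mod $2$ grading forces $\Delta(X_1)\subseteq(X_0\otimes X_1)\oplus(X_1\otimes X_0)$. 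With that in place, and with the standard identification of $w^{-1}v$ with multiplication by the unit $u$ (which you correctly flag as the bookkeeping point), your argument goes through: $(*1)$ gives $u|_{X_1}=0$, so $u$ factors through $p$, and capping with $u$ kills the $X_0\otimes X_1$ term while returning the $X_1\otimes X_0$ term to $X_1\otimes\tmf$.
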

\begin{proof}
Consider the cofiber sequence of spectra $X_1 \xrightarrow{i} \suspp \CP^5 \xrightarrow{p} X_0.$ Recall that $p=\suspp p'$, where $p'$ is the $10$-skeleton of the natural map $\CP^\infty \to \mathbb HP^\infty$. We have a diagram
\begin{equation}\label{diag:extension}
\begin{tikzcd}
X_1 \ar[dd]\ar[ddrr,"\,\,\,\,\,\,\,\,\,\,\,\,\,\,\,\,\,\, \implies" {labl, above},"(\dagger)","{V}|_{X_1}" below]\ar[ddrr,bend left,"0"]\\
\\
\suspp \CP^5 \ar[d,"\suspp p' "] \ar[r,"V"] &bu \ar[r] & bj \ar[r,"{j'}"] & bgl_1\sphere\ar[r, "bgl_11"] &bgl_1\tmf. \\
\suspp\mathbb HP^2 \ar[urr,dashed, "q" {below}]
\end{tikzcd}
\end{equation}
In Diagram~\ref{diag:extension}, the nullhomotopy marked $(\dagger)$ is the unique one. The dashed arrow $q$ is determined by the nullhomotopy $(\dagger)$ of the fiber. Given this, a choice $v$ of nullhomotopy $V_{\tmf}$ extending the canonical $j$-orientation of $V|_{X_1}$ is equivalent to a choice $v'$ of nullhomotopy $bgl_11\circ j' \circ q$.

Thus, the map $p'$ of spaces gives rise to a map $\Thom(p')\:\Th{\CP^5}{\smallminus V} \to \Th{\mathbb HP^2}{-j'\circ q}$ participating in a homotopy commutative diagram

\begin{center}
\begin{tikzcd}[column sep=4em, row sep=1em]
\Th{\CP^5}{\smallminus V}\otimes \tmf \ar[d,"v"] \ar[rr,"\operatorname{Th}(p') \otimes \tmf"] && \Th{\mathbb HP^2}{-j'\circ p}\otimes \tmf \ar[d,"v'"] \\
\suspp \CP^5\otimes \tmf \ar[rr,"\suspp p'\otimes \tmf"]&& \suspp \mathbb HP^2\otimes \tmf .
\end{tikzcd}
\end{center}
Applying the same argument to obtain $w$ and $w'$, we get a homotopy commutative diagram

\begin{equation}\label{diag:restrict_to_HP}
\begin{tikzcd}
\suspp \CP^5\otimes \tmf \ar[d,"v"]\ar[rr,"\suspp p'"]&& \suspp \mathbb HP^2\otimes \tmf\ar[d,"v'"] \\
\operatorname{Th}(p')\:\Th{\CP^5}{\smallminus V}\otimes \tmf \ar[d,"w^{-1}"] \ar[rr,"\operatorname{Th}(p') \otimes \tmf"] && \Th{\mathbb HP^2}{-j'\circ q}\otimes \tmf \ar[d,"(w')^{-1}"] \\
\suspp \CP^5\otimes \tmf \ar[rr,"\suspp p'"]&& \suspp \mathbb HP^2\otimes \tmf .
\end{tikzcd}
\end{equation}
Diagram~\ref{diag:restrict_to_HP} implies that $a=w^{-1}v$ has $a_{10}\simeq 0$.\end{proof}

\begin{proof}[Proof of Theorem~\ref{thm:well_defined}]
Given $v$ and $w$ both $\tmf$-orientations for $V$ satisfying condition $(*)$, let $a=w^{-1}v$ and let $a_{11}\:X_1\to X_1$ be as in Equation~\eqref{eq:matrix_aut}. By Proposition~\ref{prop:ratio_thom}, all but the left-most triangle in the following diagram commute:
\begin{equation}\label{diag:comms2}
\begin{tikzcd}[column sep=10, row sep=6]
& X_1\arrow[r]&\Sigma^\infty_+\CP^5 \otimes \tmf\arrow[dr,"w"] \\
\sphere^{10}\otimes \tmf \arrow[ur, "i"]\arrow[dr,"i"] & & & (\CP^5)^{\smallminus V}\otimes \tmf \ar[r,"\operatorname{Th}(V)"] & BU{\hspace{-.6pt}}(3)^{\smallminus\gamma_3} \otimes \tmf.
\\
& X_1\arrow[uu,"a_{11}" right]\arrow[r] & \Sigma^\infty_+\CP^5 \otimes \tmf
\arrow[uu, "w^{-1}v"]\arrow[ur,"v"]
\end{tikzcd}
\end{equation}

To get that $\rho(V)$ as in Diagram~\eqref{eq:rho_def} is well-defined, it suffices to prove that Diagram~\eqref{diag:comms2} commutes after applying the functor $$\tmf^{-3}(-)=\pi_0\left(\op{Maps}_{\tmf}\left((-)\otimes \tmf,\Sigma^{-3}\tmf\right)\right).$$
Note that the map $a_{11} \circ i$ splits as a sum $b_0 \oplus b_1,$ where $b_0\in \Auttmf(\sphere^{10}\otimes \tmf)$ and $b_1\in \Maps_\sphere(\sphere^{10},\Sigma^2C(\alpha_1)\otimes \tmf).$ Since $\tmf^{-3}(\Sigma^2C(\alpha_1))=0$ by an Atiyah--Hirzebruch spectral sequence, it suffices to show that $b_0^*$ is the identity on $\tmf^{-3}(-)$.

We have reduced to studying the $\tmf$-module automorphisms of $\sphere^{10}\otimes \tmf$ which arise from automorphisms of $\suspp \CP^5\otimes \tmf$ of the form $w^{-1}v$ for $v,w$ satisfying conditions $(*)$.

Next, we partially compute the set of all nullhomotopies of $V_{\tmf}\:\suspp \CP^5\to bgl_1\tmf$. This set is a torsor for
$$G:=\pi_0\operatorname{Maps}_{\sphere}(\suspp\CP^5,\Sigma^{-1}blg_1\tmf)=\pi_0\operatorname{Maps}_{\sphere}(\suspp\CP^5,gl_1\tmf).$$
Fix one nullhomotopy $v$ of $V_{\tmf}$ which satisfies the hypotheses of Theorem~\ref{thm:well_defined}.
We can a group homomorphism $h\: G \to \pi_0\operatorname{Aut}_{\tmf}(\suspp \CP^5 \otimes \tmf)$ by
\begin{align*} g&\mapsto \left( \suspp\CP^5 \otimes \tmf \xrightarrow{gv} \Th{\CP^5}{-f}\otimes \tmf \xrightarrow{ v^{-1}} \suspp \CP^5\otimes \tmf\right)
\end{align*}
and a subgroup $G' := \{ h \in G \, | \, hv \text{ satisfies conditions $(*)$ }\} \subset G.$ This induces a group homomorphisms
$$h':=\Big(G'\hookrightarrow G \xrightarrow{h} \pi_0\Auttmf(\suspp\CP^5 \otimes \tmf) \xrightarrow{\tilde\pi} \pi_0\Auttmf(\sphere^{10} \otimes \tmf) \Big),$$ where $\tilde\pi$ takes an automorphism $a$ of $\suspp\CP^5 \otimes \tmf$ to the component $b_0\in \pi_0\Auttmf(\sphere^{10}\otimes \tmf).$
To show that
Diagram~\ref{diag:comms2} commutes after applying $\tmf^{-3}(-)$, it suffices to show that $h'$ is the zero map. We show that $G'$ is a subgroup of $\Z/3 \oplus \Z_{(3)}$. Since $\pi_0\Auttmf(\sphere^{10} \otimes \tmf) \simeq \Z_{(3)}^\times$ and $\Z/3 \oplus \Z_{(3)}$ admits no nontrivial maps to $\Z_{(3)}^\times,$ this suffices.

Given a basepoint for $\CP^5$, we split the zero cell of $\suspp\CP^5$. Requiring a given orientation to lift to the canonical $\HZ$-orientation determines the nullhomotopy on $\sphere^0$. Therefore we have that $G' \subset \pi_0 \operatorname{Maps}_{\operatorname{Sp}}(\susp\CP^5,gl_1\tmf).$ We compute this group via an Atiyah--Hirzebruch spectral sequence shown in Figure~\ref{fig:gl1_CP5} below.

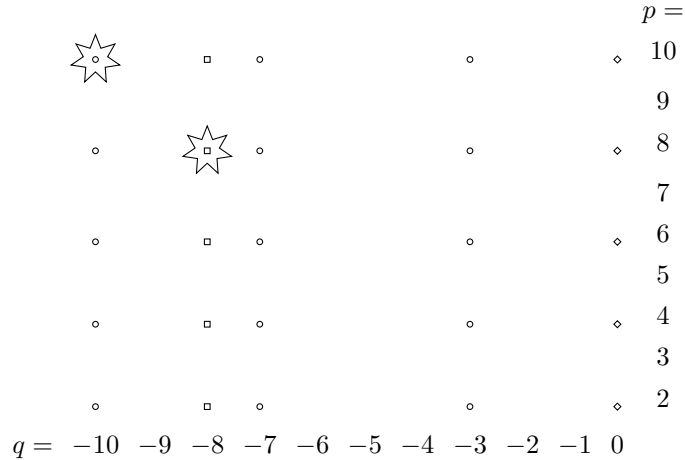
\begin{figure}[h]
\centering
\textbf{Nonzero terms on the $E_2$-page of the Atiyah--Hirzebruch spectral sequence $H^p(\susp \CP^5; \pi_{-q}gl_1\tmf) \implies \pi_{p+q}\operatorname{Maps}_{\sphere}(\susp\CP^5,gl_1\tmf)$. }\par\medskip
\begin{tikzpicture}
\matrix (m) [matrix of math nodes,
nodes in empty cells,nodes={minimum width=2.5ex,
minimum height=2.5ex},
column sep=.1ex,row sep=.1ex]{
& & & & & & & & & & & & p= \\
& \node[circle,draw,yscale=.2,xscale=.2] {}; \node[star,star points=7,star point ratio=2,minimum height=.1cm,minimum width=.1cm,draw] {}; & &\node[rectangle,draw,yscale=.2,xscale=.2]{}; & \node[circle,draw,yscale=.2,xscale=.2] {}; & & & & \node[circle,draw,yscale=.2,xscale=.2] {}; & & & \node[diamond,draw,yscale=.2,xscale=.2] {}; & 10 \\
& & & & & & & & & & & & 9 \\
& \node[circle,draw,yscale=.2,xscale=.2] {}; & &\node[rectangle,draw,yscale=.2,xscale=.2]{};\node[star,star points=7,star point ratio=2,minimum height=.1cm,minimum width=.1cm,draw] {}; & \node[circle,draw,yscale=.2,xscale=.2] {}; & & & & \node[circle,draw,yscale=.2,xscale=.2] {}; & & & \node[diamond,draw,yscale=.2,xscale=.2] {}; & 8 \\
& & & & & & & & & & & & 7 \\
& \node[circle,draw,yscale=.2,xscale=.2] {}; & &\node[rectangle,draw,yscale=.2,xscale=.2]{}; & \node[circle,draw,yscale=.2,xscale=.2] {}; & & & & \node[circle,draw,yscale=.2,xscale=.2] {}; & & & \node[diamond,draw,yscale=.2,xscale=.2] {}; & 6 \\
& & & & & & & & & & & & 5 \\
& \node[circle,draw,yscale=.2,xscale=.2] {}; & &\node[rectangle,draw,yscale=.2,xscale=.2]{}; & \node[circle,draw,yscale=.2,xscale=.2] {}; & & & & \node[circle,draw,yscale=.2,xscale=.2] {}; & & & \node[diamond,draw,yscale=.2,xscale=.2] {}; & 4 \\
& & & & & & & & & & & & 3 \\
& \node[circle,draw,yscale=.2,xscale=.2] {}; & &\node[rectangle,draw,yscale=.2,xscale=.2]{}; & \node[circle,draw,yscale=.2,xscale=.2] {}; & & & & \node[circle,draw,yscale=.2,xscale=.2] {}; & & & \node[diamond,draw,yscale=.2,xscale=.2] {}; & 2 \\
\quad\strut q=& -10 & -9 & -8& -7& -6&-5 &-4 &-3 &-2 &-1 &0\strut \\
};
\end{tikzpicture}\caption{A circle indicates a non-zero three-torsion group; a square a non-zero torsion-free group; and a diamond a group $\Z_{(3)}^\times$. The stars in bidegree $(-10,10)$ and $(-8,8)$ indicate terms along the $p+q=0$ line which contribute.}\label{fig:gl1_CP5}\end{figure}

Thus, there is an extension of $\Z_{(3)}$-modules
$$0 \to \Z/3 \to \pi_0 \operatorname{Maps}_{\operatorname{Sp}}(\susp\CP^5,gl_1\tmf) \to \Z_{(3)} \to 0.$$
Since $\operatorname{Ext}^1_{\Z_{(3)}}(\Z_{(3)},\Z/3)=0$, the group is $\Z_{(3)}\oplus \Z/3.$
\end{proof}

\subsecl{The invariant $\rho$ separates Chern classes for rank $3$ bundles on $\CP^5$}{subsec:rho_works}

Our next goal is to show that the invariant $\rho$ defined by Theorem~\ref{thm:well_defined} distinguishes vector bundles of rank $3$ on $\CP^5$ with the same Chern classes. Recall that $\pi_{10}\BUc\simeq \Z/3$ acts on $[\CP^5,\BUc]$ as in Construction~\ref{const:action_Z3}. Given $(f,\sigma) \in [\CP^5,\BUc]\times \pi_{10}\BUc,$
$$(f,\sigma) \, \mapsto \, \sigma V:= \Big( \CP^5 \xrightarrow{Q} \CP^5 \vee S^{10}\xrightarrow{f\vee \sigma} \BUc\Big).$$
Fix $a_1,a_2,$ and $a_3$ with $a_1\equiv 0\pmod 3$. By Theorem~\ref{classification}, this action restricts to a transitive one on each set
$$\V_{a_1,a_2,a_3}=\{ V\: \CP^5 \to \BUc \, | \, c_i(V)=a_i\}/ \sim,$$ which is free if only if $a_2\equiv 0 \pmod 3$.

By Theorem~\ref{thm:tmf_class_exists} we have a Thomification isomorphism
$$t\: \pi_{10}(\BUc) \to \pi_{10}(\Sigma^{-3}\tmf),$$
defined relative to an orientation for a generator of $\pi_{10}\BUct$. More precisely, let $\sigma \in \pi_{10}\BUct$ and an take an orientation $v_0$ for $\sigma$ satisfying condition $(*)$ from Definition~\ref{def:star} with respect to the splitting $\suspp S^{10}\simeq \sphere^0 \oplus \sphere^{10}$. Then
\begin{equation}\label{def:t}t(\sigma):=\tilde\rho\circ \Thom_{v_0}(\sigma),\end{equation}
with $\Thom_{v_0}$ as in Definition~\ref{def:thom_sigma}.
\begin{conv} We write $v_0$ for both the nullhomotopy of $\sigma\: \suspp S^{10} \to bgl_1\sphere$ and the associated nullhomotopy of $bgl_11\circ \sigma$.
\end{conv}
Our main goal in this subsection is to prove:
\begin{prop}\label{prop:rho_works} Let $V$ be a rank $3$ vector bundle on $\CP^5$ with $c_1\equiv 0 \pmod 3$ and $c_2 \equiv 0 \pmod 3$. The $\rho(\sigma V)= \rho(V)+t(\sigma).$
\end{prop}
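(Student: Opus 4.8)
The plan is to realize the action $V \mapsto \sigma V$ at the level of oriented Thom spectra and to read off $\rho(\sigma V)$ one wedge summand at a time. Recall from Construction~\ref{const:action_Z3} that $\sigma V = (\sigma \vee V)\circ Q$ for the collapse map $Q\: \CP^5 \to S^{10}\vee \CP^5$, and that $\sigma V$ has the same Chern classes as $V$, so it too is $\tmf$-orientable and $\rho(\sigma V)$ is defined. The two structural properties of $Q$ are that the projection to the first summand is the top-cell collapse $\CP^5 \to \CP^5/\sk^8\CP^5 = S^{10}$ and the projection to the second summand is homotopic to the identity. On reduced suspension spectra this says precisely that the co-action $\delta\: \susp\CP^5 \to \sphere^{10}\vee \susp\CP^5$ restricts on the top cell to the diagonal: $\delta\circ i = \iota_{\sphere^{10}} + i$, where the first term on the right is the inclusion of the wedge summand $\sphere^{10}$ and the second is the top-cell inclusion $i\:\sphere^{10}\hookrightarrow \susp\CP^5$ appearing in Equation~\eqref{eq:rho_def}. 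This additive splitting of the top cell is exactly what will produce the sum $\rho(V)+t(\sigma)$.

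First I would Thomify $Q$ against the virtual bundle $W:=(\sigma\vee V)^*(-\gamma_3)$, which restricts to $-\sigma$ on $S^{10}$ and to $-V$ on $\CP^5$ and is trivial at the common basepoint. Since $\sigma$ is stably trivial, the spherical orientation $v_0$ of Definition~\ref{def:thom_sigma} splits $\Th{S^{10}}{-\sigma}\simeq \sphere^0\oplus\sphere^{10}$, and splitting off the common basepoint cell gives an equivalence $\Th{S^{10}\vee\CP^5}{W}\simeq \sphere^0 \oplus \sphere^{10}\oplus\widetilde{\Th{\CP^5}{-V}}$. Under this equivalence the Thomified classifying map $\Thom(\sigma\vee V)$ restricts to $\Thom_{v_0}(\sigma)$ on the $\sphere^{10}$ factor and to $\Thom(V)$ on the reduced $\CP^5$ factor, while $\Thom(\sigma V) = \Thom(\sigma\vee V)\circ\Thom(Q)$.

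The key construction is to build an orientation $v'$ of $\sigma V$ satisfying condition $(*)$ by pulling back along $\Thom(Q)$ the orientation of $W$ assembled from the $j$-orientation $v_0$ of $\sigma|_{S^{10}}$ and a chosen orientation $v$ of $V$ satisfying $(*)$ (which exists by the remark following Theorem~\ref{thm:well_defined}). By Theorem~\ref{thm:well_defined} the value $\rho(\sigma V)$ is independent of the orientation of $\sigma V$ among those satisfying $(*)$, so I am free to compute with this convenient $v'$. After smashing with $\tmf$ and feeding the top cell through, the diagonal identity $\delta\circ i = \iota_{\sphere^{10}} + i$ splits the composite $\Thom(\sigma V)^*(\tilde\rho)\circ v'\circ i$ of Equation~\eqref{eq:rho_def} as a sum of two terms: the $\sphere^{10}$-summand contribution is $\tilde\rho\circ\Thom_{v_0}(\sigma) = t(\sigma)$ by Equation~\eqref{def:t}, and the $\CP^5$-summand contribution is $\tilde\rho\circ\Thom(V)\circ v\circ i = \rho(V)$, since the projection of $Q$ to the $\CP^5$ factor is the identity and $v'$ restricts to $v$ there. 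Additivity of the $\tmf$-cohomology class $\tilde\rho$ then gives $\rho(\sigma V) = \rho(V) + t(\sigma)$.

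The main obstacle is the middle step: producing an orientation $v'$ of $\sigma V$ that simultaneously (i) is compatible with the wedge decomposition of $W$ under $\Thom(Q)$, so that its restrictions to the two summands are genuinely $v_0$ and $v$, and (ii) satisfies condition $(*)$ of Definition~\ref{def:star}, so that Theorem~\ref{thm:well_defined} applies and the computation is choice-independent. The crux is a compatibility of canonical $j$-orientations: the top cell of $\CP^5$ lies in the summand $X_1$, on which $(*)$ prescribes the $j$-orientation, and I must check that the $j$-orientation of $\sigma|_{S^{10}}$ coming from the unique nullhomotopy of Lemma~\ref{lem:j_homtopy} matches the $j$-orientation of $V|_{X_1}$ transported along the collapse. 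Since both arise from the essentially unique nullhomotopies into $bj$ guaranteed by Lemma~\ref{lem:j_homtopy}, I expect this to follow from naturality of the image-of-$j$ construction with respect to the collapse map; this is the single point that requires genuine verification rather than formal manipulation.
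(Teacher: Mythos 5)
Your proposal is correct and follows essentially the same route as the paper: Thomify the collapse map $Q$, split the Thom spectrum of the wedge as $\Th{\CP^5}{-V}\oplus\sphere^{10}$ using the spherical/$j$-orientation of $\sigma$, choose an orientation of $V\oplus\sigma$ built from $j$-orientations so that the induced orientations of both $V$ and $\sigma V$ satisfy $(*)$, and read off $\rho(V)+t(\sigma)$ from the diagonal on the top cell. The compatibility point you flag as the crux is resolved exactly as you anticipate --- the paper simply takes the $j$-orientation on the whole summand $X_1'=\Sigma^2C(\alpha_1)\oplus(\sphere^{10}\oplus\sphere^{10})$ of the wedge and invokes the uniqueness of nullhomotopies into $bj$ (Lemma~\ref{lem:j_homtopy}), so that the restrictions to $\sigma V$ and to $V$ automatically satisfy condition $(*)$.
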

This implies that the invariant $\rho$ separates Chern classes as follows.
\begin{cor}\label{cor:rho_works} If $V_1,$ $V_2$ are rank $3$ vector bundles on $\CP^5$ that have the same Chern classes, and such that $$c_1(V_1)=c_1(V_2)\equiv 0 \pmod 3, \,\,\,\,\,\,c_2(V_1)=c_2(V_2)\equiv 0 \pmod 3.$$
Then $\rho(V_1)=\rho(V_2)$ if and only if $V_1$ and $V_2$ are represented by homotopic maps to $BU{\hspace{-.6pt}}(3)$, i.e. if and only if $V_1$ and $V_2$ are topologically equivalent.
\end{cor}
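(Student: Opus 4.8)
The plan is to deduce the corollary directly from the additivity formula of Proposition~\ref{prop:rho_works} together with the torsor structure furnished by Theorem~\ref{classification}. First I would observe that since $c_1(V_1)=c_1(V_2)\equiv 0\pmod 3$, each $V_i$ lifts uniquely up to homotopy along $\BUc\to BU(3)$ (as $\BUc$ is even), so there is no loss in regarding $V_1,V_2$ as elements of $\V_{a_1,a_2,a_3}$ for the common Chern data $(a_1,a_2,a_3)$. Because $a_1\equiv a_2\equiv 0\pmod 3$, parts (1) and (3) of Theorem~\ref{classification} say that the action of $\pi_{10}BU(3)\simeq\Z/3$ on this set is both transitive and free; that is, $\V_{a_1,a_2,a_3}$ is a $\Z/3$-torsor.

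By transitivity there is a unique $\sigma\in\pi_{10}\BUc$ with $V_2=\sigma V_1$. Applying Proposition~\ref{prop:rho_works} gives $\rho(V_2)=\rho(\sigma V_1)=\rho(V_1)+t(\sigma)$, so $\rho(V_1)=\rho(V_2)$ holds if and only if $t(\sigma)=0$ in $\pi_{13}\tmf$. The Thomification map $t$ of \eqref{def:t} is an isomorphism, this being exactly the content of the isomorphism \eqref{eq:isom_pi_10} of Theorem~\ref{thm:imprecise_existence} (established precisely in Theorem~\ref{thm:tmf_class_exists} via $\Thom_{u_0}(\sigma)^*\tilde\rho=\alpha_1\beta_1$). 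In particular $t$ is injective, so $t(\sigma)=0$ forces $\sigma=0$.

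It then remains to combine these observations with freeness. The reverse implication is immediate: if $V_1$ and $V_2$ are homotopic then $\sigma=0$, whence $\rho(V_1)=\rho(V_2)$ since $\rho$ is well-defined on homotopy classes by Theorem~\ref{thm:well_defined}. For the forward implication, $\rho(V_1)=\rho(V_2)$ yields $t(\sigma)=0$, hence $\sigma=0$ by injectivity of $t$; freeness of the $\pi_{10}BU(3)$-action then gives $V_2=\sigma V_1=V_1$, i.e.\ $V_1$ and $V_2$ are represented by homotopic maps to $BU(3)$ and are therefore topologically equivalent.

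I expect the only genuine subtlety to be bookkeeping: ensuring that the $\sigma$ extracted from the transitivity statement of Theorem~\ref{classification} is the very element fed into the additivity formula of Proposition~\ref{prop:rho_works}, and that the orientation conventions ($v_0$ satisfying condition $(*)$) used to define $t$ in \eqref{def:t} are compatible with those used to define $\rho$ through Theorem~\ref{thm:well_defined}. All of the analytic weight has already been discharged into Proposition~\ref{prop:rho_works} (additivity of $\rho$ under the action) and into the isomorphism $t$; granting those, the corollary is a formal consequence of the $\Z/3$-torsor structure and the injectivity of $t$.
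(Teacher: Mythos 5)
Your proposal is correct and follows essentially the same argument as the paper: use transitivity of the $\pi_{10}BU(3)$-action to write $V_2=\sigma V_1$, apply the additivity formula of Proposition~\ref{prop:rho_works}, and conclude from the fact that $t$ is an isomorphism (together with freeness of the action) that $\rho(V_1)=\rho(V_2)$ if and only if $\sigma=0$ if and only if $V_1\simeq V_2$. The only nitpick is that uniqueness of $\sigma$ comes from freeness rather than transitivity, but you invoke freeness correctly where it matters.
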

\begin{proof}[Proof of Corollary~\ref{cor:rho_works} assuming Proposition~\ref{prop:rho_works}] Since $\pi_{10}(\BUc)$ acts transitively, there is some element $\sigma \in \pi_{10}(\BUc)$ such that $$V_1 = \sigma V_2 \text{ and }\rho(V_2)=\rho(V_1)+t(\sigma).$$
Since $t$ is an isomorphism, $\rho(V_1)=\rho(V_2)$ if and only if $\sigma=0$ if and only if $V_1\simeq V_2$.
\end{proof}

\begin{proof}[Proof of Proposition~\ref{prop:rho_works}]
Consider the diagram
\begin{equation}\label{eq:wedge_thom}
\begin{tikzcd}[row sep=2em]
\suspp \CP^5\ar[rr,"\sigma V" above] \ar[d,"\suspp Q" left]
& & bgl_1\tmf . \\
\suspp \CP^5 \oplus \sphere^{10}, \ar[urr,"\,\,\,\,\,V \oplus \sigma" below]
\end{tikzcd}
\end{equation}
where $Q\: \CP^5 \to \CP^5 \vee S^{10}$ is as in Construction~\ref{const:action_Z3}. By Equation~\ref{eq:sum}, $\suspp \CP^5 \simeq \suspp\HP^2 \oplus \Sigma^2C(\alpha_2)\oplus \sphere^{10}$. Under this identification, $$\suspp Q=1 \oplus 1 \oplus \Delta \: \suspp\HP^2 \oplus \Sigma^2C(\alpha_2)\oplus \sphere^{10} \to \suspp\HP^2 \oplus \Sigma^2C(\alpha_2)\oplus (\sphere^{10} \oplus \sphere^{10}).$$

Let $X_1':=\Sigma^2C(\alpha_2)\oplus (\sphere^{10}\oplus \sphere^{10})$. We obtain an orientation $v'$ of $V \oplus \sigma$ by summing the $j$-orientation of $(V\oplus \sigma|_{X_1'})_{\tmf}$ with any nullhomotopy of $(V|_{\suspp \HP^2})_{\tmf}$ that lifts the canonical $\HZ_{(3)}$-orientation. By construction, $v'$ satisfies $(*)$. This induces a nullhomotopy $v$ of $(\sigma V)_{\tmf}$ and a nullhomotopy $\bar v$ of $V$, both of which satisfy condition $(*)$. Thus we get a commuting diagram of $\tmf$-modules:

\begin{center}
\begin{tikzcd}
& & \Sigma^{-3}\tmf
\\
(\CP^5)^{-\sigma V}\otimes \tmf \ar[r,"\operatorname{Th}(Q)"]\arrow[urr, "\,\,(\sigma V)^*\tilde \rho"above, bend left=10] & (\CP^5 \oplus \sphere^{10})^{-(V\oplus \sigma)}\otimes \tmf \arrow[ur,"(V \oplus \sigma)^*\tilde\rho"]
\\
\suspp \CP^5 \otimes \tmf \ar[r,"Q"]\ar[u,"v^{-1}"] & (\suspp \CP^5 \oplus \sphere^{10})\otimes \tmf \ar[u,"(v')^{-1}"]
,\end{tikzcd}
\end{center}
where we suppress tensoring with $\tmf$ from the horizontal arrows.
Below, the pushout of spaces on the left induces the diagram of Thom spectra on the right:
\begin{equation*}
\begin{tikzcd}
* \ar[r] \ar[d]& S^{10} \ar[d]
& & \sphere^0 \ar[r]\ar[d] & (S^{10})^{-\sigma}\ar[d] \\
\CP^5\ar[r]& \CP^5 \vee S^{10}
& & (\CP^5)^{\smallminus V} \ar[r]& (\CP^5 \vee S^{10})^{\smallminus V\oplus \sigma}.
\end{tikzcd}
\end{equation*}
The $j$-orientation for $\sigma_\sphere$ gives an equivalence \begin{equation}\label{eq:equiv_sum}(\CP5 \vee S^{10})^{\smallminus V \oplus \sigma} \simeq (\CP^5)^{\smallminus V} \oplus \sphere^{10}.\end{equation} The nullhomotopy $v'|_{\sphere^{10}}$ of $\sigma_{\tmf}$ extends the $j$-orientation. So, using the identification \eqref{eq:equiv_sum}, we have that $\Thom(V \oplus \sigma)^*\tilde \rho=\Thom(V)^*\tilde \rho \oplus t(\sigma)$.
Thus we get the homotopy commutative Diagram~\eqref{diag:commutes7} of $\tmf$-modules below:
\begin{equation}\label{diag:commutes7}
\begin{tikzcd}[column sep = 3em]
(\CP^5)^{-\sigma V}\otimes \tmf\ar[rrr, "\Thom(\sigma V)^*\tilde \rho" above, bend left]
& \big((\CP^5)^{\smallminus V} \oplus \sphere^{10}\big)\otimes \tmf \ar[rr,"\Thom(V)^*\tilde \rho\oplus t(\sigma)"] & &\Sigma^{-3}\tmf\\
\suspp \CP^5\otimes \tmf \ar[r,"Q"]\ar[u,"v^{-1}"] & \big(\suspp \CP^5 \oplus \sphere^{10}\big)\otimes \tmf \ar[u,"{\bar v}^{-1}\oplus 1" right] \\
\sphere^{10}\otimes \tmf \ar[uurrr, bend right=45, "\,\,\,\,\,\,\,\,\,\,\,\,\,\,\,\,\,\rho(V)+t(\sigma)" below]\ar[u,"i"]\ar[r,"\Delta\otimes \tmf"] & \big(\sphere^{10} \oplus \sphere^{10}\big)\otimes \tmf\ar[u,"i\oplus 1" right] & & .
\end{tikzcd}
\end{equation}
Comparing the two outer paths from the lower left-hand corner to $\Sigma^{-3}\tmf$ gives the result.
\end{proof}

\subsecl{Computing $\rho$ on certain sums of line bundles}{subsec:sum_line_bundles}

Let $\rho$ be as defined by Theorem~\ref{thm:well_defined}. For $L$ a line bundle, we define
$\rho(L):=\rho(L\oplus \underline{\C}^2)$.
\begin{lem}\label{lem:sum_line_bundles}
Suppose that $\O(a_1), \mathcal O(a_2)$ and $\mathcal O(a_3)$ are line bundles on $\CP^5$ with $a_i\equiv 0 \pmod 3$. Then
$\rho\left(\mathcal O(a_1)\oplus \mathcal O(a_2) \oplus \mathcal O(a_3)\right)=\rho(\O(a_1))+\rho(\O(a_2))+\rho(\O(a_3)) \in \Z/3.$
\end{lem}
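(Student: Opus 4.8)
The plan is to use the fact that a sum of line bundles is classified by a map factoring through the Whitney-sum map, which reduces the problem to understanding $\tilde\rho$ on a smash power of a single, very simple Thom spectrum. Set $T_1:=\Th{BU(1)_{\coz}}{-\gamma_1}$. Since $BU(1)_{\coz}\simeq\CP^\infty$ with $\gamma_1$ pulling back to $\mathcal O(3)$, one has $P^1u_1=(c_1(-\gamma_1)^2+c_2(-\gamma_1))u_1\equiv 0\pmod 3$, so $\HF{3}{*}(T_1)\simeq\mathbb F_3[x]\cdot u_1$ with $|x|=2$ and $P^1(x^ku_1)=kx^{k+2}u_1$. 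Each $a_i\equiv 0\pmod 3$, so the maps $\mathcal O(a_i)$ lift to $\widetilde{\mathcal O}(a_i)\colon\CP^5\to BU(1)_{\coz}$, and the Whitney-sum map $\mu\colon BU(1)_{\coz}^{\times 3}\to BU(3)$ lifts to $\BUc$ because $c_1(\mu)=\sum_i c_1(\gamma_1^{(i)})\equiv 0\pmod 3$. As $\mu^*\gamma_3$ is the external sum of three copies of $\gamma_1$, Thomification gives $\Thom(\mu)\colon T_1^{\wedge 3}\to\BUct$; set $\omega:=\Thom(\mu)^*\tilde\rho\in\tmf^{-3}(\sk^{26}T_1^{\wedge 3})$ (the relevant cells all lie below degree $11$). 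The classifying map of $\bigoplus_i\mathcal O(a_i)$ is $\mu\circ g$ with $g=(\widetilde{\mathcal O}(a_1),\widetilde{\mathcal O}(a_2),\widetilde{\mathcal O}(a_3))$, while that of $\mathcal O(a_i)\oplus\underline{\mathbb C}^2$ is $\mu\circ g_i$ with $g_i$ the $i$-th coordinate, so all four invariants are computed from the single class $\omega$. I can choose the required $(*)$-orientations compatibly by summing the canonical $j$-orientations (which are additive under Whitney sum because $j$ is an infinite-loop map) with a fixed $\HZ$-lift on the $\suspp\HP^2$ summand; Theorem~\ref{thm:well_defined} guarantees the answer is independent of these choices.

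Next I would split off the basepoint of $BU(1)_{\coz}$ to write $T_1\simeq\sphere^0\vee\widetilde T_1$ with $\widetilde T_1$ concentrated in degrees $\geq 2$, giving
$$T_1^{\wedge 3}\simeq\bigvee_{S\subseteq\{1,2,3\}}\widetilde T_1^{\wedge|S|}$$
and a matching decomposition $\omega=\sum_S\omega_S$. Evaluating $g^*\omega$ on the top cell $\sphere^{10}$ of $\CP^5$ splits as a sum over $S$. The term $\omega_\emptyset$ sits in degree $0$ and drops out; since the $i$-th coordinate of $g$ is $\widetilde{\mathcal O}(a_i)$, the singleton terms reproduce exactly $\sum_i\rho(\mathcal O(a_i))$ (each $\rho(\mathcal O(a_i))$ being computed from the same class $\omega_{\{i\}}$ via the factorization of $\mathcal O(a_i)\oplus\underline{\mathbb C}^2$ through the $i$-th factor). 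Hence the lemma reduces to showing that the cross terms $\omega_{\{i,j\}}$ and $\omega_{\{1,2,3\}}$ contribute $0$ to this top-cell evaluation.

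The crux, and the step I expect to be hardest, is this cross-term vanishing: the top cell of $\CP^5$ does reach $\widetilde T_1^{\wedge 2}$ and $\widetilde T_1^{\wedge 3}$ through the reduced diagonal (via cup products $t^a\smile t^b$ landing in $t^5u$), so the contribution cannot be dismissed on dimensional grounds and must be computed. I would control it with the Atiyah--Hirzebruch spectral sequence computing $\tmf^{-3}$ of $\widetilde T_1^{\wedge 2}$ and $\widetilde T_1^{\wedge 3}$ in total degree $13$, using the explicit $P^1$-module structure of $T_1$ above together with the identity $\langle\alpha_1,\alpha_1,\alpha_1\rangle=\beta_1$, exactly as in the computations of Subsections~\ref{subsec:proof_exists} and \ref{subsec:proof_unique}. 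The conceptual reason the cross contributions should die is that $\tilde\rho$ is, after smashing with $\tmf$, the composite of the splitting $r\colon\BUct\otimes\tmf\to C(\alpha_1)\otimes\tmf$ of the map $k$ (whose $4$-cell is dual to $c_2\cdot u$) with an extension of $\alpha_1$, while $\mu^*(c_2\cdot u)=0$ in $\HF{3}{*}$, since $c_2$ of an external sum of three line bundles is $\sum_{i<j}c_1(\gamma_1^{(i)})c_1(\gamma_1^{(j)})=9\sum_{i<j}x_ix_j\equiv 0\pmod 3$. Thus on each cross summand the image of $\Thom(\mu)$ meets the $C(\alpha_1)$ supporting $\tilde\rho$ only through its bottom cell, and the relevant degree-$13$ classes are eliminated by the differentials of the spectral sequence before they can pair with the diagonal image of $t^5u$. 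Making this elimination precise---checking that every cross-term generator in the relevant bidegrees supports or receives a nonzero $P^1$- or $P^1P^1$-differential---is the main technical burden; granting it, additivity follows by comparing the two top-cell evaluations.
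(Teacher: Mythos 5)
Your proposal follows essentially the same route as the paper's proof: factor the classifying map of $\bigoplus_i\O(a_i)$ through an external Whitney sum over a threefold product, Thomify, use multiplicativity of $(*)$-orientations to identify the resulting Thom spectrum with a smash/tensor product of the three individual Thom spectra, and then split off bottom cells to compare $\rho(V)$ with $\sum_i\rho(\O(a_i))$. The only real difference is that you work universally over $BU(1)_{\coz}^{\times 3}$ and $T_1^{\wedge 3}$, where the paper stays over $(\CP^5)^{\times 3}$ and $\bigotimes_i\Th{\CP^5}{-\O(a_i)}$; this is cosmetic.

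The substantive issue is exactly the one you flag as the crux. After splitting each factor as $\sphere^0\vee\widetilde T_1$, the reduced diagonal of $\CP^5$ carries the top cell $\sphere^{10}$ nontrivially into the summands $\widetilde T_1^{\wedge 2}$ and $\widetilde T_1^{\wedge 3}$ (its Hurewicz image is $\sum_{a+b=5} t^a\otimes t^b$, which is nonzero), so the cross terms cannot be discarded for degree reasons and must be shown to pair to zero with $\tilde\rho$. You are right that this is where the work lies, and it is worth noting that the paper's own proof does not carry it out either: the final diagram there simply replaces the composite $\sphere^{10}\to\suspp\CP^5\xrightarrow{\suspp\Delta}(\suspp\CP^5)^{\otimes 3}$ by $1\oplus 1\oplus 1$ into the three single-top-cell summands and asserts homotopy commutativity, which is precisely the cross-term vanishing. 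So your write-up is not weaker than the paper's; it is more candid about the same gap. One caution about your proposed way of closing it: the retraction $r$ satisfies $k^*r^*=\operatorname{id}$, so $r^*$ of the $4$-cell of $C(\alpha_1)$ equals $-c_2\cdot u$ only modulo $\ker k^*$ --- e.g.\ it may contain a $t^2\cdot u$ term, and $\mu^*(t^2\cdot u)=(\sum_i x_i)^2\cdot u$ has nonzero cross components even though $\mu^*(c_2\cdot u)=0$. Hence the observation that $c_2$ of the external sum vanishes mod $3$ does not by itself kill the cross terms; you still need to run the Atiyah--Hirzebruch argument in bidegree $(10,-13)$ (where $\pi_{13}\tmf=\Z/3\{\alpha_1\beta_1\}$) and control that ambiguity. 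Until that computation is written down, both your argument and the paper's are incomplete at the same step.
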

This immediately implies:
\begin{cor}\label{cor:three_times_bundle} Let $a$ be an integer divisible by $3$. Then $\rho(\O(a)^{\oplus 3}) = 0.$
\end{cor}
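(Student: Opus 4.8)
The plan is to derive Corollary~\ref{cor:three_times_bundle} as an immediate specialization of the additivity result in Lemma~\ref{lem:sum_line_bundles}. Since $a$ is divisible by $3$, we have $a \equiv 0 \pmod 3$, so the hypothesis of Lemma~\ref{lem:sum_line_bundles} is satisfied by taking $a_1 = a_2 = a_3 = a$. Applying the lemma in this case gives
\begin{equation*}
\rho\big(\O(a)^{\oplus 3}\big) = \rho(\O(a)) + \rho(\O(a)) + \rho(\O(a)) = 3\,\rho(\O(a)).
\end{equation*}
The only remaining input is that $\rho$ is $\Z/3$-valued: by Theorem~\ref{thm:well_defined}, for any rank $3$ bundle $V$ on $\CP^5$ with $c_1(V) \equiv c_2(V) \equiv 0 \pmod 3$, the invariant $\rho(V)$ lives in $\pi_{13}\tmf \simeq \Z/3$. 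Hence $3\,\rho(\O(a)) = 0$, and the corollary follows.

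There is no real obstacle here: all of the mathematical content is already contained in the additivity statement of Lemma~\ref{lem:sum_line_bundles}, which reduces $\rho$ on a direct sum of line bundles (each with first Chern class divisible by $3$) to the sum of the individual values. Once that additivity is available, the corollary is purely formal, relying only on the torsion order of the target group $\Z/3$. The sole point to verify in writing is that the common value $a$ does indeed satisfy the congruence condition required to invoke the lemma, which is guaranteed by the hypothesis $3 \mid a$.
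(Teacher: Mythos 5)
Your proposal is correct and matches the paper's own reasoning: the paper derives the corollary as an immediate consequence of Lemma~\ref{lem:sum_line_bundles}, exactly as you do, with the only additional input being that $\rho$ takes values in the $3$-torsion group $\pi_{13}\tmf \simeq \Z/3$. Nothing further is needed.
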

\begin{proof}[Proof of Lemma~\ref{lem:sum_line_bundles}] Let $V:=\oplus_{i=1}^3\O(a_i)$. Let $\tilde V$ be the bundle on $(\CP^5)^{\times 3}$ given by $$\tilde V:=\oplus_{i=1}^3p_i^*\O(a_i),$$ where $p_i$ is the $i$-th projection. We can factor $V\: \CP^5 \to \BUc$ as follows
$$V\: \CP^5 \xrightarrow{\Delta} {\CP^5}^{\times 3} \xrightarrow{\tilde V} \BUc$$
and naturally identify $\Th{(\CP^5)^{\times 3}}{-\tilde V} \simeq \otimes_i\Th{\CP^5}{-\O(a_i)}.$ Using $\tmf$-orientations satisfying $(*)$ \footnote{See Definition~\ref{def:star}.} for each $\O(a_i)$, we get a diagram of $\tmf$-modules:

\begin{center}
\begin{tikzcd}
\Th{\CP^5}{\smallminus V}
\ar[r,"\op{Th}(\Delta)"]
& \otimes_i\Th{\CP^5}{-\O(a_i)} \ar[r,"\op{Th}(\tilde V)"]
&\BUct \ar[r,"\tilde \rho"]
& \Sigma^{-3}\tmf \\
\suspp\CP^5 \ar[r,"\suspp \Delta"] \ar[u,"\simeq_{\tmf}"]
& (\suspp \CP^5)^{\otimes 3} \ar[u,"\simeq_{\tmf}"]\\
\sphere^{10}\ar[u] \ar[r,"\Sigma^\infty \Delta"] & (\sphere^{10})^{\otimes 3}\ar[u]\\
\sphere^{10}\ar[uuurrr,dashed,bend right=30, "\rho(V)" below]\ar[u]\ar[r,"1 \oplus 1 \oplus 1",dashed]& \sphere^{10}\oplus \sphere^{10}\oplus \sphere^{10}\ar[u]\ar[uuurr,dashed,"\oplus_i\rho(\O(a_i))\,\,\,\,\,\,\,\,\,\,\,\,\,\,\,\,\,\,\,\,\,\,\,\,\,\," above]
\end{tikzcd}
\end{center}
where all terms in the diagram are implicitly tensored with $\tmf$ and the maps marked $\simeq_{\tmf}$ are $\tmf$-Thom isomorphisms.
The diagram is homotopy commutative and comparing the dashed arrows proves the Lemma.\end{proof}

While this section provides some computations of $\rho$, we do not have a general formula. Indeed, it is unclear what a formula for $\rho$ should look like, since $\rho$ cannot be computed from Chern classes. Some inspiration can be drawn from \cite{AR}, where Atiyah and Rees show that the $\alpha$ invariant of a rank $2$ bundles on $\CP^3$ can be computed as a holomorphic semi-characteristic \cite[Theorem 4.2]{AR}, provided we choose a holomorphic representative for the topological class of the bundle. This leads to the following question:
\begin{q} For $V$ an algebraic vector bundle on $\CP^5$, is there some description of the invariant $\rho(V)$ in terms of sheaf cohomology of $V$?
\end{q}

\subsecl{A $3$-torsion $\tmf$-valued invariant for rank $2$ bundles}{subsec:rank_2}

The homotopy groups of $BU{\hspace{-.6pt}}(3)$ through degree $10$ are fairly sparse, so a complete analysis of $[\CP^5,BU{\hspace{-.6pt}}(3)]$ was possible. This allowed us to classify rank $3$ bundles on $\CP^5$ by first enumerating such bundles and second defining an invariant to distinguish them.
It turns out that the invariant $\rho$ is also interesting in the case of rank $2$ bundles on $\CP^5$.
\begin{prop}\label{prop:rank2} Let $BU{\hspace{-.6pt}}(2)_{\coz}$ denote the homotopy fiber of the map $c_1 \pmod{3}\: BU(2) \to K(\Z/3,2).$ Let $\gamma_2$ denote the pullback of the universal bundle on $BU(2)$ to $BU{\hspace{-.6pt}}(2)_{\coz}$.  Consider the composite
\[\sk^{26}\Th{BU{\hspace{-.6pt}}(2)_{\coz}}{\smallminus\gamma_2}\to \sk^{26}\BUct \xrightarrow{\tilde \rho} \Sigma^{-3}\tmf,\] where $\tilde \rho$ represents the class defined in Theorem~\ref{thm:tmf_class_exists}. The map has the following properties:
\begin{enumerate}
\item[a.] The map induced map $\pi_{10}BU{\hspace{-.6pt}}(2)_{\coz} \to \pi_{10}\Sigma^{-3}\tmf$ is a bijection.
\item[b.] The invariant $V \mapsto \rho(V\oplus \underline{\C})$ distinguishes $3$-local equivalence classes of rank $2$ vector bundles on $\CP^5$ with fixed $c_1,c_2$ where additionally $c_1\equiv 0\pmod 3$.
\end{enumerate}
\end{prop}
\begin{proof}
For (a), recall Diagram~\ref{eq:alpha12}. Note that the unstable generator for $\pi_4BU{\hspace{-.6pt}}(3)$ factors through $BU{\hspace{-.6pt}}(2)$, so Theorem~\ref{thm:tmf_class_exists} shows that the image of a generator for $\pi_{10}BU{\hspace{-.6pt}}(2)_{\coz}$ is nonzero. Therefore it suffices to check that $\pi_{10}BU{\hspace{-.6pt}}(2)\simeq \Z/3$. This is classical: since $BSU{\hspace{-.6pt}}(2)\simeq BS^3$, the homotopy in the relevant range is given in Figure~\ref{fig:homotopy_BU2}.

\begin{figure}[h]
\begin{tabular}{| M{1.2cm} | M{1cm} | M{1cm} | M{1cm} | M{1cm} |M{1cm} | M{1cm} | M{1cm} | M{1cm} | M{1cm} | M{1cm} | N}
\hline
& \textbf{$\pi_2$} & \textbf{$\pi_3$} & \textbf{$\pi_4$} & \textbf{$\pi_5$} & \textbf{$\pi_6$} & \textbf{$\pi_7$} & \textbf{$\pi_8$} & \textbf{$\pi_9$} & \textbf{$\pi_{10}$} \\
\hline
& & & & & & & & &
\\[-8pt]
$BU{\hspace{-.6pt}}(2)$ &$\Z$ & $0$ &$\Z$ &$\Z/2$ &$\Z/2$ &$\Z/12$ &$\Z/2$ &$\Z/2$ &$\Z/3$ \\[2pt]
\hline
\end {tabular}\caption{Homotopy of $BU{\hspace{-.6pt}}(2)$}\label{fig:homotopy_BU2}
\end{figure}

Since $\CP^5$ has even cohomology, only even $3$-local homotopy gives rise to $3$-local invariants (odd $3$-local homotopy contributes to constraints on possible Chern classes). Therefore a argument as in Remark~\ref{rmk:top_cell} shows that the $\pi_{10}BU{\hspace{-.6pt}}(3)$-action as in Construction~\ref{const:action_Z3} is the only source of $3$-local invariants beyond Chern classes; for rank $2$ bundles on $\CP^5$ with $c_1\equiv 0 \pmod 3$, these bundles detected by $\rho$.
\end{proof}
By work of Switzer, distinguishing rank $2$ vector bundles on $\CP^5$ with the same Chern classes is a $3$-local problem: there are either $0$, $1$, or $3$ vector bundles of rank $2$ on $\CP^5$ with any fixed integers as their Chern classes. All three options can occur, depending on various conditions on Chern classes. A necessary condition for there to be distinct bundles with the same Chern classes $c_1,c_2\in \Z$ is that $c_1^2\equiv c_2 \pmod{3}$ (see \cite[Theorem 4]{Switzer2} and \cite[Chapter 1, Section 6.1]{OSS}). In the case that $c_1\equiv c_2 \equiv 0 \pmod {3}$,  the invariant $\rho$ is defined and  distinguishes these bundles by Proposition~\ref{prop:rank2}.

\bibliographystyle{abbrv}
\bibliography{Advances_final_7_31_24}

\end{document}